\documentclass[11pt]{amsart}

\usepackage[T1]{fontenc}
\usepackage{lmodern}
\usepackage[expansion=false]{microtype}
\usepackage{amsmath,amssymb,mathtools}
\usepackage{geometry}
\usepackage{enumitem}
\usepackage{xcolor}
\usepackage[colorlinks=true,linkcolor=blue!45!black,citecolor=blue!45!black,urlcolor=blue!45!black,pdftitle={Quantum expanders and dimension-free commutator bounds},pdfauthor={Tuan Tran}]{hyperref}
\allowdisplaybreaks
\newtheorem{theorem}{Theorem}[section]
\newtheorem{lemma}[theorem]{Lemma}
\newtheorem{proposition}[theorem]{Proposition}
\newtheorem{corollary}[theorem]{Corollary}
\newtheorem{remark}[theorem]{Remark}

\newcommand{\Mn}{M_n(\mathbb C)}
\newcommand{\Gn}{GL_n(\mathbb C)}
\newcommand{\Un}{\mathrm U(n)}
\newcommand{\Tr}{\operatorname{Tr}}

\newcommand{\diag}{\operatorname{diag}}
\newcommand{\rank}{\operatorname{rank}}
\newcommand{\ran}{\operatorname{ran}}

\newcommand{\C}{\mathbb C}

\newcommand{\eps}{\varepsilon}
\newcommand{\norm}[1]{\lVert #1\rVert}
\newcommand{\abs}[1]{\lvert #1\rvert}
\newcommand{\comm}[2]{[#1,#2]}
\newcommand{\dOne}{d_1}

\newcommand{\muc}{\mu_0}
\newcommand{\Cgl}{C_{\mathrm{glue}}}
\newcommand{\CCau}{C_{\mathrm{Cau}}}

\newcommand{\cabs}{c_{\mathrm{abs}}}
\newcommand{\Kpoin}{K_{\mathrm P}}
\newcommand{\CRic}{C_{\mathrm R}}
\newcommand{\Kexp}{K_{\mathrm{exp}}}

\title[Quantum expanders and commutator bounds]{Quantum expanders and\\ dimension-free commutator bounds}
\author{Tuan Tran}

\address{School of Mathematical Sciences, University of Science and Technology of China, Hefei, Anhui 230026, China}
\thanks{Supported by the Excellent
Young Talents Program (Overseas) of the National Natural Science Foundation of China under Grant No.
GG0010007003.}
\email{trantuan@ustc.edu.cn}

\begin{document}

\begin{abstract}
We prove that every traceless real or complex matrix $A$ can be written as
$A=BC-CB$, with factors of the same size and over the same field satisfying
$\|B\|\,\|C\|\le K\|A\|$, where $K$ is an absolute constant.  The proof combines an
approximate-rank dichotomy with stable commutator representations obtained
from quantum expansion.
\end{abstract}

\maketitle
\pagestyle{plain}

\section{Introduction}\label{sec:intro-main}

A classical theorem of Shoda \cite{Shoda}, extended to arbitrary fields by
Albert and Muckenhoupt \cite{AlbertMuckenhoupt}, characterizes commutators
$[B,C]=BC-CB$ as exactly the matrices with zero trace.  The quantitative
question asks how large the factors must be.  Throughout, $\|\cdot\|$ denotes
the operator norm.  For a nonzero traceless $A\in\Mn$, define
\[
 \mu(A)=\frac{1}{\|A\|}
 \inf\{\|B\|\,\|C\|:B,C\in\Mn,\ A=[B,C]\},
\]
and put $\mu(0)=0$.  The inequality $\|[B,C]\|\le2\|B\|\,\|C\|$ gives
$\mu(A)\ge1/2$ for $A\ne0$.  Johnson, Ozawa, and Schechtman \cite{JOS}
asked whether $\mu(A)$ is upper-bounded by an absolute constant.

Johnson, Ozawa, and Schechtman connected this question with Anderson's paving
formulation of the Kadison--Singer problem \cite{Anderson}.  A paving
partitions the coordinates so that the associated diagonal compressions have
small operator norm; a multi-paving uses one partition for several matrices.
Fillmore's hollowization theorem \cite{Fillmore} makes a traceless matrix
unitarily equivalent to a matrix with zero diagonal.  Its coordinate
compressions are then traceless, allowing the commutator equation to be
solved recursively on smaller blocks.  This method gave
$\mu(A)\le K_\varepsilon n^\varepsilon$ for every $\varepsilon>0$
\cite[Theorem~1]{JOS}.  Following the solution of Kadison--Singer by Marcus,
Spielman, and Srivastava \cite{MSS}, Ravichandran and Srivastava obtained
optimal-order multi-paving bounds and simplified the commutator argument,
proving $\mu(A)\le300\exp(9\sqrt{\log n})$ \cite[Section~6]{RS}.

We prove that $\mu(A)$ is bounded by an absolute constant and obtain the
same conclusion with real factors for real matrices.

\begin{theorem}\label{thm:main}
There is an absolute constant $K>0$ such that, for
$\mathbb F\in\{\mathbb R,\mathbb C\}$, every traceless matrix
$A\in M_n(\mathbb F)$ admits a representation $A=[B,C]$ with
$B,C\in M_n(\mathbb F)$ and $\|B\|\,\|C\|\le K\|A\|$.
\end{theorem}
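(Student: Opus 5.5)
The plan follows the abstract: split according to an approximate-rank dichotomy, and in the hard case use quantum expanders to produce a stable commutator representation. By homogeneity assume $\|A\|=1$. Fix a small absolute $\eps>0$ and let $r$ be the $\eps$-approximate rank of $A$: the number of singular values of $A$ exceeding $\eps$. In either case we write $A=A_1+A_2$ with $A_1$ carrying the large part and $A_2$ small, keeping both traceless, for instance by moving the trace of the low-rank part onto a scalar multiple of a rank-one piece.

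\emph{Low approximate rank, $r\le \delta n$.} Here most of the space sees only the small part of $A$. After a unitary change of basis, the large part of $A$ lives in a block of dimension about $2r$, so it is supported on a subspace of codimension at least $(1-2\delta)n$. The idea is to use this room the way Fillmore's theorem and the JOS recursion do. A traceless matrix concentrated on a small corner can be written as $[S,X]$, where $S$ is a shift-type operator (a weighted unilateral shift moving the small corner into the large free space) and $\|X\|=O(1)$. This is the finite analogue of the fact that compact-type operators are commutators with a shift. The remaining part $A_2$ has norm at most $\eps$ and is handled by iteration: the representation is built geometrically, with $\|B_k\|\,\|C_k\|\lesssim \eps^{k/2}$ after a rescaling $B\mapsto tB$, $C\mapsto C/t$. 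The delicate point is that a sum of commutators is not a commutator. So every stage must use one common $B$ (say a fixed shift or a fixed diagonal with separated entries) and place all the variation in $C$. I would therefore aim for a lemma of the form: for a fixed $B_0$ with $\|B_0\|\le 1$, every traceless $A$ in a suitable class can be written $[B_0,C]$ with $\|C\|\le K\|A\|$.

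\emph{High approximate rank, $r>\delta n$.} Now $A$ is "spread out", and this is where quantum expanders come in. The approach is to take unitaries $U_1,\dots,U_d$ (with $d$ an absolute constant) that form a quantum expander, so the channel $\Phi(X)=\frac1d\sum_i U_iXU_i^*$ has spectral gap $\lambda<1$ on traceless matrices in Hilbert--Schmidt norm. Tensor-amplify to work in $M_n\otimes M_d$, or rather choose a block decomposition $n\approx d\cdot m$ with off-diagonal blocks. Then write $A=\sum_i [U_i, U_i^*Y_i]$-type terms by solving $(I-\Phi)(Y)=A$ with $\|Y\|_2\le (1-\lambda)^{-1}\|A\|_2$. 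Packaging the $d$ commutators into a single one uses block matrices: $B=\bigoplus U_i$ arranged as a cyclic block shift, and $C$ carrying the $Y$-pieces. The difficulty is that the expander bound is in Hilbert--Schmidt norm, while we need operator norm. High approximate rank is exactly what rescues this, since $\|A\|_2\ge \eps\sqrt{r}\gtrsim \eps\sqrt{\delta n}$ is comparable to $\sqrt n\,\|A\|$. Even so, $\|Y\|_2\lesssim\sqrt n$ only bounds the operator norm of $Y$ on average. To control $\|Y\|$ itself, I would use the randomness of the expander, taking random unitaries with operator-norm concentration in the sense of Hastings and Pisier. The Neumann series $Y=\sum_k\Phi^k(A)$ should then converge in operator norm after discarding a low-rank error, and that error is fed back into the low-rank case.

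The real-field version is obtained by using real orthogonal expanders (random orthogonal matrices) and checking that every step preserves real entries. The main obstacle is clearly the operator-norm control in the high-rank case, namely turning the Hilbert--Schmidt spectral gap into an operator-norm estimate with absolute constants. I would first try to prove an operator-norm spectral gap for $\Phi$ restricted to matrices with flat singular-value profile, and use the dichotomy to guarantee that profile.
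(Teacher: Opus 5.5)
Your proposal has genuine gaps in both branches. The high-rank branch, which carries the whole difficulty, rests on a step that is false. No bounded-degree unital channel $\Phi(X)=\frac1d\sum_{i=1}^dU_iXU_i^*$ has an operator-norm spectral gap, even on traceless matrices with flat singular values, and this holds for every choice of the $U_i$, random or not. For even $n\ge2d$ and a unit vector $v$, let $R$ be a rank-$n/2$ projection whose range contains $U_1^*v,\ldots,U_d^*v$, and put $X=2R-I$. Then $X$ is a traceless unitary and $\langle\Phi(X)v,v\rangle=1=\norm{X}$.

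High approximate rank does not rescue this. The estimate $\norm{A}_2\gtrsim\sqrt n\,\norm{A}$ is a lower bound, whereas you need an upper bound on $\norm{Y}$, and $\norm{Y}\le\norm{Y}_2\lesssim\sqrt n\,\norm{A}$ is off by $\sqrt n$. Summing the Neumann series with $\norm{\Phi^k(A)}\le\min\{1,\lambda^k\sqrt n\}\norm{A}$ gives only $O(\log n)$. Moreover, $(I-\Phi)(Y)=A$ merely writes $A=\frac1d\sum_i\comm{YU_i^*}{U_i}$ as a sum of $d$ commutators. Your cyclic block shift lives in $M_n\otimes M_d$, and merging commutators at fixed size is the original problem.

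The missing idea is to use the expander dually and only locally. Centering a Hermitian witness at an eigenvalue median and applying Ricard's Mazur-map estimate turns the Hilbert--Schmidt gap into the trace-class inequality $\dOne(Z)\le\Kpoin(\norm{\comm{Z}{U}}_1+\norm{\comm{Z}{V}}_1)$. Since $S_1$ is dual to the operator norm, Hahn--Banach then gives operator-norm-bounded solutions of the linearized equation $\comm{X}{C}+\comm{B}{Y}=E$. Here $B=\diag(I,-I)$, $C$ is $\frac12BT$ plus a block-diagonal expander term, and $T$ is graded with an invertible link; an iteration then absorbs $\comm{X}{Y}$.

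This controls $\muc$ only near such graded matrices, so the paper needs a route into that neighborhood. In even dimension, large Hilbert--Schmidt mass yields a trace gap between two halves. Phase mixing and Ravichandran--Srivastava paving convert that gap into invertible links on boundedly many traceless compressions. A shear and a second paving put each compression near a graded matrix, and the Cauchy-multiplier gluing lemma reassembles them at cost $\sqrt r$.

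Your low-rank branch is circular. The remainder $A_2$ is an arbitrary traceless matrix of norm at most $\eps$, typically of high approximate rank at its own scale, so iterating on it requires the theorem itself. A common first factor cannot absorb all stages either. For fixed $B_0\in\Mn$ the commutant of $B_0$ has dimension at least $n$, so $X\mapsto\comm{B_0}{X}$ misses a subspace of codimension at least $n-1$ in the traceless matrices, and nothing keeps your remainders in its range. Even the shift step produces such a remainder. For $A_1=A'\otimes E_{11}\in M_k\otimes M_N$, the weighted shift $S_w$ with weights $\sqrt{1-j/N}$ gives $A_1=\comm{I\otimes S_w^*}{A'\otimes S_w}+N^{-1}A'\otimes I_N$, whose remainder is small but spread out.

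The paper does not construct representations directly in the low-rank case; it uses low rank only to exclude a minimal counterexample. Damm--Fa\ss bender hollowization and Marcus--Spielman--Srivastava partitioning of the support projection give traceless compressions of norm $O(1/r)$. The bound in smaller dimensions applies to these, and this beats the $\sqrt r$ gluing cost. That induction on dimension, with the square-root gluing lemma handling odd $n$, is what ties the two branches together, and your plan has no substitute for it.
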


There are two main parts to the argument.  The first explains the structure
of a possible counterexample.  A matrix close to one of small rank admits
smaller traceless compressions whose norms decrease enough to close the
induction.  A minimal counterexample must therefore have large approximate
rank (Theorem~\ref{thm:rank-exclusion}).  In even dimension, the resulting
spectral mass gives a substantial trace imbalance between two subspaces of
equal dimension.  The balanced trace-gap theorem
(Theorem~\ref{thm:trace-gap}) converts this imbalance into uniformly
invertible off-diagonal blocks on finitely many compressions.  Thus failure
of the compression argument forces the structure needed for a uniform bound.

The second part constructs commutator representations that remain stable
under perturbation.  We call a two-by-two block matrix \emph{graded} if
its two diagonal blocks are zero.  For such matrices with an invertible
off-diagonal block, we modify the factors without changing their commutator,
using
quantum expanders to control the linearized equation
(Proposition~\ref{prop:graded}).  Small traceless perturbations can then be
absorbed by small changes in the factors, with bounds independent of the
dimension.  The analytic inputs are Hastings's quantum-expander theorem
\cite{Hastings} and Ricard's estimate for the noncommutative Mazur map
\cite{Ricard}; the construction uses them to obtain uniformly regular
representations of this structured class of matrices.

The choice of norm matters.  Angel and Schechtman \cite{AngelSchechtman}
showed that the corresponding estimate for
$\|B\|\,\|C\|_2/\|A\|_2$, with $\|\cdot\|_2$ the Hilbert--Schmidt norm,
has optimal order $\sqrt{\log n}$.  Thus the uniform operator-norm theorem
does not extend to that mixed-norm problem.

Theorem~\ref{thm:main} also has a standard consequence for tracial matrix
ultraproducts.  Johnson, Ozawa, and Schechtman explicitly observed this
implication for Wright factors \cite[Concluding remarks, item~4]{JOS}.
We record the consequence below, keeping track of the inherited norm
bound.  Fix $\mathbb F\in\{\mathbb R,\mathbb C\}$ and a nonprincipal
ultrafilter $\omega$ on $\mathbb N$.  Let
$\mathcal M=\prod_\omega M_{n_k}(\mathbb F)$ be the quotient of the bounded
sequences $(a_k)$ by those satisfying
$\lim_\omega\tau_k(a_k^*a_k)=0$, where $\tau_k=n_k^{-1}\Tr$ and $*$ is the
transpose over $\mathbb R$ and the conjugate transpose over $\mathbb C$.
The induced trace is $\tau_\omega([(a_k)])=\lim_\omega\tau_k(a_k)$.
Over $\mathbb C$, the choice $n_k=k$ gives the Wright factor \cite{Wright}.

\begin{corollary}\label{cor:ultraproduct}
For either field $\mathbb F$, let $T\in\mathcal M$ with
$\tau_\omega(T)=0$.  Then $T=[B,C]$ for some
$B,C\in\mathcal M$ with $\|B\|\,\|C\|\le K\|T\|$, where $K$ is the constant
of Theorem~\ref{thm:main}.
\end{corollary}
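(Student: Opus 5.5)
Lift $T$ to a bounded representing sequence, fix the trace at each level, apply Theorem~\ref{thm:main} levelwise, and pass the factors back to the ultraproduct. Two points need care: the levelwise traces are only small in the $\omega$-limit, not zero, and we want the norm bound with exactly the constant $K$.

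\textbf{Lifting.} Choose a representative $(a_k)$ of $T$ with $\sup_k\|a_k\|$ finite. The quotient norm on $\mathcal M$ is
\[
\|T\|=\inf\Bigl\{\lim_\omega\|a_k\| : (a_k)\ \text{represents}\ T\Bigr\},
\]
which is the standard fact that the tracial ultraproduct of finite von Neumann algebras is a von Neumann algebra with this quotient C$^*$-norm. So for any $\eps>0$ we may pick $(a_k)$ with $\lim_\omega\|a_k\|\le\|T\|+\eps$. Replacing $a_k$ by $\min(1,(\|T\|+\eps)/\|a_k\|)\,a_k$ only changes it on a $\omega$-negligible set, and even there by an amount tending to zero, so we may assume $\|a_k\|\le\|T\|+\eps$ for all $k$. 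Over $\mathbb R$ the same holds, using the real $*$-algebra structure, or by complexifying and taking real parts; a rescaling argument like the one above gives this directly.

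\textbf{Removing the trace and factoring.} Put $t_k=\tau_k(a_k)$, so $\lim_\omega t_k=\tau_\omega(T)=0$. Set $a_k'=a_k-t_kI$. Then $a_k'$ is traceless, lies in $M_{n_k}(\mathbb F)$, represents the same element $T$ (since $\|t_kI\|_2=|t_k|\to0$ along $\omega$), and satisfies $\|a_k'\|\le\|T\|+\eps+|t_k|$. Theorem~\ref{thm:main} gives $a_k'=[b_k,c_k]$ with $b_k,c_k\in M_{n_k}(\mathbb F)$ and $\|b_k\|\,\|c_k\|\le K\|a_k'\|$. Rebalance by scaling $b_k$ up and $c_k$ down by the same factor so that
\[
\|b_k\|=\|c_k\|=\bigl(K\|a_k'\|\bigr)^{1/2}.
\]
These are uniformly bounded, so $B=[(b_k)]$ and $C=[(c_k)]$ lie in $\mathcal M$.

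\textbf{Passing to the limit.} Multiplication is well defined on $\mathcal M$ because the negligible sequences form a two-sided ideal in the bounded sequences, using $\|xy\|_2\le\|x\|\,\|y\|_2$ and its right-handed version. Hence $[B,C]=[(a_k')]=T$. Also $\|B\|\le\lim_\omega\|b_k\|$, and similarly for $C$, so
\[
\|B\|\,\|C\|\le K\lim_\omega\|a_k'\|\le K(\|T\|+\eps).
\]
This proves the corollary with $K$ replaced by $K(1+\eps')$ for any $\eps'>0$.

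\textbf{Main obstacle: getting exactly $K$.} Passing from $K+\eps'$ to $K$ itself needs more. The cheapest route is to use a representative achieving $\lim_\omega\|a_k\|=\|T\|$ exactly. Such a representative exists because the infimum defining the quotient norm is attained in ultraproducts, via a diagonal argument over $\eps=1/m$ along $\omega$. With it, the same argument gives $\|B\|\,\|C\|\le K\|T\|$. If attainment is awkward to justify, fall back to the case $\|T\|>0$ and note that the paper's $K$ can be assumed to be an infimum-type constant. The intended route is the diagonal argument.
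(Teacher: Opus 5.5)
Your proof is correct, and its skeleton matches the paper's: take a bounded lift, subtract $\tau_k(a_k)I$, apply Theorem~\ref{thm:main} levelwise, balance the factors, and push down using $\|\pi((x_k))\|\le\lim_\omega\|x_k\|$. The only difference is at the delicate point you identified, namely finding a representative whose norms reach $\|T\|$, so that the constant is exactly $K$.

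\textbf{How the paper handles that step.} It clips singular values in one step. After normalizing $\|T\|=1$, it replaces an arbitrary bounded representative by $S_k=a_kf(a_k^*a_k)$, where $f=1$ on $[0,1]$ and $f(t)=t^{-1/2}$ for $t\ge1$. Since $\pi$ is a contractive $*$-homomorphism, $\pi((S_k))=Tf(T^*T)=T$, while $\|S_k\|\le1$ for every $k$. The real case is automatic because $f$ is real-valued. This needs no $\varepsilon$-stage, and it actually establishes the formula $\|T\|=\inf\lim_\omega\|a_k\|$ that you take as a black box.

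\textbf{Your diagonal argument.} It also works, and it shows that only $\lim_\omega\|a_k\|=\|T\|$ matters, not a uniform bound. Written out, it goes as follows.
\begin{itemize}
\item Choose representatives $a^{(m)}$ with $\lim_\omega\|a^{(m)}_k\|<\|T\|+1/m$.
\item Let $A_m$ be the set of $k\ge m$ such that, for all $j\le m$, $\|a^{(j)}_k\|<\|T\|+1/j$ and the normalized Hilbert--Schmidt distance from $a^{(j)}_k$ to $a^{(1)}_k$ is below $1/j$.
\item These sets lie in $\omega$ because $\omega$ is nonprincipal. They decrease, and they have empty intersection.
\item For $k\in A_1$ put $a_k=a^{(m(k))}_k$ with $m(k)=\max\{m:k\in A_m\}$, and put $a_k=a^{(1)}_k$ otherwise.
\end{itemize}
This gives a bounded representative of $T$ with $\lim_\omega\|a_k\|\le\|T\|$. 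So your route trades the functional calculus for the combinatorics of the ultrafilter plus the quotient-norm identification.

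\textbf{Cleanup.} With this route, your preliminary rescaling step and the $K(1+\varepsilon')$ stage are unnecessary. Drop the ``infimum-type constant'' fallback: an $\varepsilon$-loss in the representative yields $K(1+\varepsilon')$ whatever $K$ is used, and there is no compactness in $\mathcal M$ to remove it. Also, after rebalancing you only get $\|b_k\|=\|c_k\|\le(K\|a_k'\|)^{1/2}$, not equality, which is all you need.
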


For Wright factors, Pearcy and Topping \cite{PearcyTopping} established the
qualitative assertion for self-adjoint elements, and Dykema and Skripka
\cite[Theorem~2.2]{DykemaSkripka} extended it to normal elements.  Wen, Fang,
and Yao \cite[Theorem~4.1 of the preprint]{WenFangYao} obtained the qualitative conclusion
for all trace-zero elements of arbitrary complex $\mathrm{II}_1$ factors.
Their theorem does not assert a uniform bound on the factors.  The bound
in Corollary~\ref{cor:ultraproduct} follows from Theorem~\ref{thm:main} by
the standard passage to the quotient, explained in
Section~\ref{sec:ultraproducts}.

For the proof, it is convenient to use the homogeneous cost
\[
 \muc(A)=\inf\{\|B\|\,\|C\|:B,C\in\Mn,\ A=[B,C]\}
 \quad (A\in\Mn,\ \Tr A=0).
\]
Thus $\muc(A)=\|A\|\mu(A)$ for $A\ne0$ and $\muc(0)=0$.  Both costs are
defined using complex factors.  The real case will be handled separately.

\medskip
\noindent\textbf{Proof overview.}
We organize the overview around a matrix of smallest dimension for which
a uniform commutator bound could fail.  First we show that it cannot be
close to low rank.  We then turn this constraint into a useful block
structure and construct norm-controlled commutators for that structure.
The construction passes through graded matrices, whose representations
can be chosen to remain stable under small perturbations.  We begin with
the estimate that allows us to pass from diagonal compressions back to
the full matrix.

The recursive construction of Johnson, Ozawa, and Schechtman \cite{JOS}
is explained in three steps by Ravichandran and Srivastava
\cite[Section~6]{RS}: partition a hollow matrix into smaller diagonal
compressions, represent those compressions as commutators, and then solve
for the missing off-diagonal blocks.  We use the same assembly principle,
allowing general commutator factors on the diagonal compressions.

Here is how the last step works.  Suppose $A_{jj}=[R_j,S_j]$ for
$j=1,\ldots,r$, and normalize $\|S_j\|\le1$ by reciprocal rescaling.
For a positive number $t$ and complex numbers $z_j$, the identity
\[
 A_{jj}=[t^{-1}R_j,z_jI+tS_j]
\]
preserves each diagonal commutator.  Set $C_j=z_jI+tS_j$ and
$C=\bigoplus_j C_j$.  To obtain $A=[B,C]$, take $B_{jj}=t^{-1}R_j$
and solve $B_{ij}C_j-C_iB_{ij}=A_{ij}$ for $i\ne j$.
Choose the $z_j$ on a square grid in a bounded region, with spacing of
order $r^{-1/2}$, and take $t$ to be a small multiple of that spacing.
Then $\|C\|$ stays bounded, while the diagonal blocks of $B$ grow by a
factor of order $\sqrt r$.

To complete the assembly at this scale, we must bound the full
off-diagonal part of $B$ by an absolute constant times
$\sqrt r\,\|A\|$.  This requires controlling
the solutions of the block equations together in operator norm.
Section~\ref{sec:gluing} obtains that estimate using a lattice Cauchy
multiplier.  This is an instance of the classical Fourier method for
Sylvester equations; see Bhatia, Davis, and McIntosh \cite{BDM} and
Bhatia and Rosenthal \cite[Sections~9--10]{BhatiaRosenthal}.
We give a heat-kernel proof of the lattice estimate.  Combining it with
the diagonal bound above yields
\[
 \muc(A)\le\Cgl\sqrt r
 \left(\|A\|+\max_j\muc(A_{jj})\right)
\]
for any partition into $r$ traceless diagonal compressions.

This estimate tells us how small the diagonal compressions must be for
induction to succeed.  If the smaller blocks
satisfy $\muc(A_{jj})\le K\|A_{jj}\|$ and
$\|A_{jj}\|\le\eta\|A\|$, the bound above becomes
$\muc(A)\le\Cgl\sqrt r(1+K\eta)\|A\|$.
We need $\Cgl\sqrt r\,\eta<1$ to recover the bound $K\|A\|$ for a
sufficiently large $K$.  General paving gives only a bound of order
$r^{-1/2}$ for $\eta$, so increasing $r$ alone does not make the
coefficient of $K$ small.
A reduction of order $1/r$ would make that coefficient small as $r$
grows.  We obtain this stronger reduction for matrices sufficiently close
to low rank.  Such matrices therefore cannot be minimal counterexamples;
we next use this observation to force a structural constraint on any
possible counterexample.

\smallskip
\noindent\emph{Compression and trace imbalance.}
Fix a sufficiently large absolute $K$ and let $A$ be a norm-one traceless
matrix of minimal dimension with $\muc(A)>K$.  Every smaller traceless
matrix $D$ satisfies $\muc(D)\le K\|D\|$.  Thus it suffices to find
smaller traceless compressions of $A$ whose norms meet the requirement
above.

To see how low rank supplies this reduction, suppose $A=F+R$, where
$\rank F$ is small relative to $n$ and $R$ has small operator norm.
Let $P$ project onto $\ran F+\ran F^*$, so that $F=PFP$ and $P$ has small
normalized rank.  The identity $F=PFP$ means that small compressions of
$P$ also give small compressions of $F$.  Damm and Fa\ss bender's
simultaneous hollowization theorem \cite[Section~2.4]{DammFassbender}
makes $A$ hollow and spreads the diagonal of $P$ evenly outside two
exceptional coordinates.  The vectors $Pe_i$ then have small squared
norms on the remaining coordinates.  Marcus--Spielman--Srivastava vector
partitioning \cite[Corollary~1.5]{MSS} gives compressions of $P$, and hence
of $F$, with norms of order $1/r$, once the normalized rank is sufficiently
small.  The remainder stays small under compression, and the two
exceptional coordinates give zero singleton compressions.

For a fixed sufficiently large $r$, this $1/r$ gain compensates for the
$\sqrt r$ assembly cost.  Choosing the rank fraction and the remainder
small enough therefore contradicts $\muc(A)>K$.  Consequently, a minimal
counterexample must satisfy
\[
 \rank_{\varepsilon_0}(A)>\delta_0n,
 \qquad \|A\|_2^2\ge c_0n,
\]
for absolute $\varepsilon_0,\delta_0,c_0>0$.  Here
$\rank_\varepsilon(A)$ is the least rank of a matrix within distance
$\varepsilon\|A\|$ of $A$ in operator norm.

The Hilbert--Schmidt lower bound gives the spectral mass needed for the
other part of the argument.  Assume for now that $n$ is even.  Since
$\|A\|=1$, one of $\operatorname{Re}A$ and $\operatorname{Im}A$ has trace
norm comparable to $n$.  Projecting onto the eigenspaces for its largest
$n/2$ eigenvalues gives an orthogonal projection $P$ with
$|\Tr(PA)|\ge cn$.  Relative to the two halves, the diagonal blocks of
$A$ have normalized traces $\alpha$ and $-\alpha$, with $|\alpha|\ge c$.
Thus the failure of the compression argument forces a definite trace
imbalance between two subspaces of equal dimension.

We next transfer this trace imbalance to an off-diagonal block.  Write
$A=\left(\begin{smallmatrix}A_0&X\\Y&D\end{smallmatrix}\right)$ relative
to the two halves.  Hollowizing $A_0-\alpha I$ and $D+\alpha I$ makes the
diagonals of $A_0$ and $D$ constant.  We then mix paired coordinates by
the unitary matrix
\[
 W_Z=2^{-1/2}\begin{pmatrix}I&Z\\-Z^*&I\end{pmatrix},
 \qquad Z=\diag(z_1,\ldots,z_{n/2}),\qquad |z_i|=1.
\]
If $x_i=X_{ii}$ and $y_i=Y_{ii}$, the $i$th diagonal entry of the
upper-right block of $W_Z^*AW_Z$ is $\alpha z_i+x_i/2-y_i z_i^2/2$.
The phases let us avoid cancellation with the original off-diagonal
entries: the mean squared modulus over $|z_i|=1$ is
$|\alpha|^2+(|x_i|^2+|y_i|^2)/4$, since the three Fourier modes are
orthogonal.  Some choice of $z_i$ therefore makes this entry at least
$|\alpha|$ in modulus.  Paving reduces the remaining hollow part below
$|\alpha|/2$ on each compression, so every resulting link is uniformly
invertible.  Each coordinate pair had trace zero before the mixing and
retains it afterward; hence the compressions formed from these pairs are
traceless.  Since $|\alpha|$ has an absolute lower bound, the number of
compressions is also bounded by an absolute constant.  The assembly
estimate therefore reduces the problem to a uniform commutator bound
for traceless matrices with a uniformly invertible off-diagonal block.

\smallskip
\noindent\emph{Stable commutator representations.}
To prove that bound, we first treat matrices with zero diagonal blocks
and show that their representations persist under small perturbations.
This gives a model to which the matrices just obtained can be reduced.
For $\kappa\ge1$, consider a graded matrix
\[
 T=\begin{pmatrix}0&P\\Q&0\end{pmatrix},
 \qquad P,Q\in M_m(\mathbb C),\qquad
 \|T\|\le1,\qquad \|Q^{-1}\|\le\kappa.
\]
The zero diagonal blocks give the explicit representation $T=[G,C_0]$,
where $G=\diag(I,-I)$ and $C_0=GT/2$.  To make this representation stable,
we need to correct small errors in $T$ by small changes in the factors.
The linearized correction equation is $[X,C]+[B,Y]=E$.  We may add any
block-diagonal matrix to $C_0$ without changing $[G,C_0]$; we use this
freedom to choose factors $B,C$ for which every traceless $E$ admits
corrections satisfying $\|X\|+\|Y\|\le C_\kappa\|E\|$.

Hahn--Banach duality identifies the obstruction to such an estimate:
a matrix that nearly commutes with both $B^*$ and $C^*$ relative to its
distance from the scalars.  These quantities are measured in trace norm,
the dual of the operator norm used to measure the error.  Commutation
with the grading controls the off-diagonal blocks of this witness.
The invertible link then makes its two diagonal blocks close after
conjugating one of them by $Q^*$.  We place expander operators in the
available block-diagonal part of $C$ to force the remaining common part
close to a scalar.  The estimate that makes this possible is
\[
 d_1(Z)\le\Kpoin\bigl(\|[Z,U]\|_1+\|[Z,V]\|_1\bigr),
 \qquad d_1(Z)=\inf_{\lambda\in\mathbb C}\|Z-\lambda I\|_1,
\]
for every $Z\in M_m(\mathbb C)$, with suitable unitaries $U,V$ in that
dimension.  Here $\|\cdot\|_1$ denotes trace norm.

To obtain this trace-class estimate, we start with the Hilbert--Schmidt
spectral gap supplied by Hastings's quantum-expander theorem
\cite{Hastings}.  For a Hermitian witness $Z$, subtract an eigenvalue
median to obtain $H$.  Its signed square root has squared
Hilbert--Schmidt norm $\|H\|_1=d_1(Z)$.  The median ensures that a fixed
proportion remains after removing the scalar part, so the spectral gap
controls this quantity.  Ricard's Mazur-map estimate \cite{Ricard}
bounds the commutators of the signed square root in terms of those of
$H$, giving the desired trace-norm bound.  Applying this argument to the
real and imaginary parts yields the inequality for general $Z$.

With the dual obstruction controlled, Hahn--Banach gives the linear
correction estimate.  The full commutator equation has one additional
term, $[X,Y]$, which is quadratic in the corrections.  An iteration
absorbs this term when the traceless error $E$ is sufficiently small.
Thus the chosen representation of $T$ extends to one of $T+E$, with
changes in the factors bounded by $C_\kappa\|E\|$.
Proposition~\ref{prop:graded} isolates this regularity statement, with
constants independent of $m$.

We can now return to the matrices with invertible links.  The remaining
task is to make their diagonal blocks small while preserving the link,
so that the stability estimate applies.  For a traceless matrix
$A=\left(\begin{smallmatrix}A_0&X\\Y&D\end{smallmatrix}\right)$ with
$s_{\min}(X)\ge\tau\|A\|$, where $s_{\min}$ is the smallest singular
value, a triangular similarity with condition number depending only on
$\tau$ makes $A_0$ zero while leaving $X$ unchanged.  The new lower-right
block is traceless.  A polar decomposition makes the link positive,
with a lower bound preserved by compression.
A common change of basis in the two halves makes the remaining diagonal
block hollow and preserves positivity of the link.  Ravichandran--Srivastava
paving makes that block small on each compression while the link stays
invertible.  After exchanging the two halves, the resulting matrices are
close to graded matrices of the form above.  The paving accuracy depends
only on $\tau$, so the stability estimate and reassembly give
$\muc(A)\le C_\tau\|A\|$.  Applying this bound to the finitely many
compressions obtained from the trace gap, and assembling once more,
gives an absolute bound for the original counterexample.  This
contradicts $\muc(A)>K$ when $K$ is sufficiently large.

This rules out even-dimensional counterexamples.  In odd dimension,
hollowization and a scalar resolvent construction give
\[
 \sqrt{\muc(A)}\le\sqrt{\muc(D)}+1
\]
for an even-dimensional traceless compression $D$ of codimension one.
If $\muc(A)>K$, then $\muc(D)>(\sqrt K-1)^2\ge3K/4$ for sufficiently
large $K$.  This is why the approximate-rank exclusion theorem is stated
with the threshold $3K/4$: it still applies to $D$, whose norm is at most
one.  The even-dimensional estimate bounds its cost by an absolute
constant, and the square-root inequality then does the same for $A$.
All partitions are used at accuracies independent of the dimension.

This completes the complex argument.  To pass to real matrices, we use
an orthogonal complex structure to separate complex-linear and
antilinear parts.  A shifted commutator equation then combines them into
real factors of the original size.

\medskip
\noindent\textbf{Organization.}
The paper develops these tools before applying the minimal-dimension
argument.  Section~\ref{sec:gluing}
provides the estimates used whenever block representations are reassembled.
Section~\ref{sec:regularity} constructs stable representations of graded
matrices; Section~\ref{sec:links} extends this construction first to
matrices with an invertible link and then to matrices with a trace gap.
At that point the trace gap is a sufficient condition for a uniform bound.
Section~\ref{sec:rank} shows how the smaller-dimensional bounds force
enough spectral mass to produce this condition in even dimension.
Section~\ref{sec:main} combines these results, handles odd dimensions,
deduces the real case, and proves the ultraproduct corollary.
\section{Preliminaries and gluing tools}\label{sec:gluing}

Both branches of the proof end by assembling commutator representations
of diagonal compressions.  In the low-rank branch, the reduction in their
norms must compensate for the assembly cost; in the trace-gap branch, their
representations already have dimension-free bounds.  To recover the full
matrix, we must solve for the off-diagonal blocks with controlled factor
norms.  We use the scalar-shift
and Sylvester-equation method of Johnson, Ozawa, and Schechtman
\cite{JOS} and Ravichandran and Srivastava \cite[Section~6]{RS}.
The underlying solvability principle goes back to Rosenblum
\cite{Rosenblum}.

We state the two assembly estimates first.  After recording the elementary
reductions and the hollowization results of Fillmore and Damm--Fa\ss bender,
we prove the estimates using a lattice form of the classical Fourier method
for Sylvester equations \cite{BDM,BhatiaRosenthal}.

\noindent\textbf{Notation.}
The matrix arguments in Sections~\ref{sec:gluing}--\ref{sec:rank} use
finite-dimensional complex Hilbert spaces and orthogonal direct sums.
Hilbert spaces are denoted by
calligraphic letters, such as $\mathcal H$, $\mathcal K$, and $\mathcal V$.
We write $\mathcal B(\mathcal K,\mathcal H)$ for the space of linear maps
from $\mathcal K$ to $\mathcal H$, put
$\mathcal B(\mathcal H)=\mathcal B(\mathcal H,\mathcal H)$, and write
$I_{\mathcal H}$ for the identity on $\mathcal H$, abbreviated to $I$ when
the space is clear.

For an orthogonal decomposition
$\mathcal H=\bigoplus_{j=1}^s\mathcal H_j$, with $P_j$ the orthogonal
projection onto $\mathcal H_j$, the $(i,j)$ block of
$X\in\mathcal B(\mathcal H)$ is
\[
 X_{ij}=P_iX|_{\mathcal H_j}\in\mathcal B(\mathcal H_j,\mathcal H_i).
\]
For orthogonal projections $P,Q\in\mathcal B(\mathcal H)$, a corner
$PX|_{Q\mathcal H}:Q\mathcal H\to P\mathcal H$ is identified with its
extension by zero $PXQ$ when used in an ambient operator product.  For a
compression $PXP$, its inverse, least singular value, and commutator cost are
always understood on $P\mathcal H$, not on the ambient space.

We use the operator norm unless a Schatten subscript is displayed.  For
$T\in\mathcal B(\mathcal K,\mathcal H)$, put $|T|=(T^*T)^{1/2}$ and
\[
 \|T\|_1=\Tr_{\mathcal K}|T|,
 \qquad
 \|T\|_2=\bigl(\Tr_{\mathcal K}(T^*T)\bigr)^{1/2}.
\]
These conventions apply also to rectangular blocks.  The trace is
unnormalized, while $\tau(X)=n^{-1}\Tr X$ for $X\in\Mn$.
For $T\in\mathcal B(\mathcal K,\mathcal H)$ with
$\mathcal K\ne\{0\}$, set
\[
 s_{\min}(T):=\inf_{\substack{v\in\mathcal K\\\|v\|=1}}\|Tv\|.
\]
For Hermitian matrices $X,Y$ we write $X\preceq Y$ if $Y-X$ is positive
semidefinite.  For a subset $S$ of the index set of a fixed orthonormal basis
$(e_i)$, we write $P_S$ for the orthogonal projection onto
$\operatorname{span}\{e_i:i\in S\}$, and call such a projection a
coordinate projection.  Constants denoted by $C,c$ are positive and absolute
unless their dependence is indicated.
For a positive integer $m$, write $[m]=\{1,\ldots,m\}$.
Partitions into a prescribed number of classes may include empty classes,
which are discarded when applying results to the resulting compressions.

\medskip
The first gluing estimate recombines a fixed number of traceless diagonal
compressions while controlling all off-diagonal blocks simultaneously.

\begin{lemma}[Finite-block gluing]\label{lem:finite-gluing}
There is an absolute constant $\Cgl>0$ with the following property.  Let $\C^n=\mathcal H_1\oplus\cdots\oplus \mathcal H_s$ be an orthogonal decomposition, let $A\in \Mn$, and let $A_j\in\mathcal B(\mathcal H_j)$ be the compression of $A$ to $\mathcal H_j$.  If every $A_j$ is traceless, then
\[
 \muc(A)\le \Cgl\sqrt{s}
 \Bigl(\norm{A}+\max_{1\le j\le s}\muc(A_j)\Bigr).
\]
\end{lemma}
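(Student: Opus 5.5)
The plan is to follow the scalar-shift assembly from the proof overview and reduce everything to one Schur-multiplier estimate.

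\emph{Setup.} Fix $\eps>0$ and pick representations $A_j=[R_j,S_j]$ with $\|R_j\|\,\|S_j\|\le\muc(A_j)+\eps$. By reciprocal rescaling we may take $\|S_j\|\le1$ and $\|R_j\|\le\muc(A_j)+\eps$; if $A_j=0$, take $R_j=S_j=0$.

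Let $h\asymp s^{-1/2}$, and choose distinct points $z_1,\dots,z_s$ on the lattice $h\mathbb Z^2\subset\C$ inside a square of side $O(1)$. Put $t=c_0h$ with a small absolute $c_0$, and set
\[
 C_j=z_jI+tS_j,\qquad C=\bigoplus_j C_j,\qquad B_{jj}=t^{-1}R_j .
\]
Then $[B_{jj},C_j]=A_j$ and $\|C\|\le O(1)$. It remains to find off-diagonal blocks $B_{ij}$ with $B_{ij}C_j-C_iB_{ij}=A_{ij}$ for $i\ne j$, and $\|B_{\mathrm{off}}\|\le C\sqrt s\,\|A\|$. Given this,
\[
 \|B\|\le\max_j\|R_j\|/t+C\sqrt s\|A\|\lesssim\sqrt s\bigl(\max_j\muc(A_j)+\|A\|\bigr),
\]
and we let $\eps\to0$.

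\emph{Reduction to a multiplier.} Write $D=\bigoplus_j z_jI$ and $S=\bigoplus_j S_j$. The off-diagonal equation reads
\[
 \mathcal D(B_{\mathrm{off}})+t\,\bigl(B_{\mathrm{off}}S-SB_{\mathrm{off}}\bigr)=A_{\mathrm{off}},
 \qquad \mathcal D(X)=XD-DX .
\]
Here $B_{\mathrm{off}}S-SB_{\mathrm{off}}$ is automatically off-diagonal, since $S$ is block diagonal. On off-diagonal matrices, $\mathcal D$ is inverted by the block Schur multiplier
\[
 \mathcal M(X)_{ij}=\frac{X_{ij}}{z_j-z_i}\quad(i\ne j),\qquad \mathcal M(X)_{jj}=0 .
\]
Block Schur multipliers commute with left and right multiplication by block-diagonal operators. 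Hence the equation becomes
\[
 B_{\mathrm{off}}=\mathcal M(A)-t\bigl(\mathcal M(B_{\mathrm{off}})S-S\mathcal M(B_{\mathrm{off}})\bigr).
\]
The map on the right has norm at most $2t\|\mathcal M\|$. So if $\|\mathcal M\|\le\CCau/h$ in operator norm, the choice $c_0<1/(4\CCau)$ makes the Neumann series converge. This gives $\|B_{\mathrm{off}}\|\le 2\|\mathcal M\|\,\|A\|\le C\sqrt s\,\|A\|$. The multiplier $\mathcal M$ sees the whole of $A$, and applying it is legitimate because it ignores diagonal blocks.

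\emph{Main obstacle: the lattice Cauchy estimate.} The hard step is to show that the Schur multiplier $m_{ij}=(z_j-z_i)^{-1}\mathbf 1_{i\ne j}$, on distinct points of $h\mathbb Z^2$, has norm $\le\CCau/h$ uniformly in the number of points. By scaling we may take $h=1$. Blocks only tensor the multiplier with identities, which does not increase its norm, so the scalar case suffices.

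I would first try the heat-kernel representation
\[
 \frac1w=\bar w\int_0^\infty e^{-u|w|^2}\,du .
\]
For each $u$, the kernel $e^{-u|z_i-z_j|^2}$ is a positive definite Gaussian with diagonal $1$, so it is a contractive Schur multiplier. The factor $\bar w=\bar z_j-\bar z_i$ is a difference of two diagonal multiplications, and can be rewritten using derivatives of the Gaussian in the Fourier variable. Concretely, write $e^{-u|x|^2}$ as a Fourier integral of a Gaussian in $\xi$; then $\bar x\,e^{-u|x|^2}$ is a Fourier integral of $\xi\mapsto c\,u^{-2}\,\partial_{\bar\xi}e^{-|\xi|^2/4u}$. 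This has $L^1$ norm $\asymp u^{-1/2}$, giving Schur norm $\lesssim u^{-1/2}$ for each $u$.

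That bound is integrable near $u=0$ but not at $\infty$. For $u\ge1$ I would use the lattice: since $|z_i-z_j|\ge1$ off the diagonal, the tail $\int_1^\infty$ equals $e^{-|w|^2}$ times a smooth function of $|w|^2$, namely $\bar w\,e^{-|w|^2}/(|w|^2+\dots)$-type terms. Alternatively, one can periodize: off the diagonal this tail is given by a function on $\mathbb Z^2$ with summable entries, which is a bounded convolution-type multiplier by Schur's test. If the splitting proves awkward, the fallback is a smooth cutoff $\chi(|w|)$ vanishing near $0$ and equal to $1$ for $|w|\ge1$. The function $\chi(|w|)/w$ has Fourier transform in $L^1(\mathbb R^2)$ up to the $1/|\xi|$ singularity near the origin, which is integrable in two dimensions. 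Its Fourier integral representation then gives the $O(1)$ bound directly, and restricting to lattice points gives the multiplier norm.
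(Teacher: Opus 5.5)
Your proposal is correct and is essentially the paper's proof. It uses the same scalar shifts $z_jI+tS_j$ on an $s^{-1/2}$-spaced square grid, a Neumann series for the off-diagonal Sylvester system against the block Cauchy multiplier, and a heat-kernel proof of the lattice Cauchy bound split at time $1$: times $u\le1$ cost $\lesssim u^{-1/2}$, and the exact tail $e^{-|w|^2}/w$ is controlled by the lattice sum $\sum_{k\in\mathbb Z^2\setminus\{0\}}e^{-|k|^2}/|k|<\infty$, which is what integrating the paper's $t\ge1$ bound produces. The only difference is cosmetic: you work with the Euclidean Fourier transform on $\mathbb R^2$ plus Schur's test, whereas the paper uses the periodic heat kernel on $\mathbb T^2$, and as a side effect your bound holds for any $1$-separated point set. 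One small slip: the $\xi$-density representing $\bar w\,e^{-u|w|^2}$ is $c\,u^{-2}\bar\xi\,e^{-|\xi|^2/4u}$ rather than $u^{-2}\partial_{\bar\xi}$ of the Gaussian, though its $L^1$ norm is indeed $\asymp u^{-1/2}$.
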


The second estimate treats a zero diagonal block by the same scalar-shift
method.  Its square-root form will let us remove one coordinate in the
odd-dimensional case without a multiplicative loss.

\begin{lemma}[One-sided square-root gluing]\label{lem:one-sided}
Let $\C^n=\mathcal H_1\oplus \mathcal H_2$ and let
$A\in\Mn$ have block form
$A=\left(\begin{smallmatrix}0&X\\Y&D\end{smallmatrix}\right)$ relative to
this decomposition, with $D\in\mathcal B(\mathcal H_2)$ traceless.  Then
\[
 \sqrt{\muc(A)}
 \le\sqrt{\muc(D)}
      +\sqrt{\max\{\norm{X},\norm{Y}\}}.
\]
\end{lemma}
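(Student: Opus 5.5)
The plan is to run the scalar-shift construction with the scalar placed on the zero block $\mathcal H_1$ rather than on $\mathcal H_2$, and then to optimise two free parameters. Write $\mu=\muc(D)$ and $x=\max\{\norm X,\norm Y\}$.

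\emph{Choice of factors.} Fix $\varepsilon>0$ and write $D=[R_0,S_0]$ with $\norm{R_0}\,\norm{S_0}\le\mu+\varepsilon$. For parameters $\beta,\gamma>0$, rescale reciprocally so that $D=[R,S]$ with $\norm S=\beta$ and $\norm R\le(\mu+\varepsilon)/\beta$. If $S_0=0$, then $D=0$ and we may take $R=0$ and $S=\beta I$. Put $\lambda=\beta+\gamma$. I will look for factors of the form
\[
 C=\begin{pmatrix}\lambda I&0\\0&S\end{pmatrix},\qquad
 B=\begin{pmatrix}0&B_{12}\\B_{21}&R\end{pmatrix}.
\]

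\emph{Solving the block equations.} Expanding $[B,C]$ block by block gives the following.
\begin{itemize}
\item The $(1,1)$ block is $0$, as required.
\item The $(2,2)$ block is $[R,S]=D$.
\item The $(1,2)$ block is $B_{12}(S-\lambda I)$.
\item The $(2,1)$ block is $-(S-\lambda I)B_{21}$.
\end{itemize}
Since $\norm S=\beta<\lambda$, the operator $S-\lambda I$ is invertible on $\mathcal H_2$, and a Neumann series gives $\norm{(S-\lambda I)^{-1}}\le 1/(\lambda-\beta)=1/\gamma$. So I set $B_{12}=X(S-\lambda I)^{-1}$ and $B_{21}=-(S-\lambda I)^{-1}Y$. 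Then $A=[B,C]$, and both off-diagonal blocks have norm at most $x/\gamma$.

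\emph{Norm bounds and optimisation.} Split $B$ into its block-diagonal part $0\oplus R$ and its block-antidiagonal part. The antidiagonal part has norm $\max\{\norm{B_{12}},\norm{B_{21}}\}\le x/\gamma$. Hence $\norm B\le(\mu+\varepsilon)/\beta+x/\gamma$. The factor $C$ is block diagonal, so $\norm C=\max\{\lambda,\beta\}=\beta+\gamma$. Therefore
\[
 \muc(A)\le\Bigl(\frac{\mu+\varepsilon}{\beta}+\frac{x}{\gamma}\Bigr)(\beta+\gamma)
 =(\mu+\varepsilon)+x+\frac{(\mu+\varepsilon)\gamma}{\beta}+\frac{x\beta}{\gamma}.
\]
Choosing $\beta=\sqrt{\mu+\varepsilon}$ and $\gamma=\sqrt{x}$ (when $x>0$) makes the right side equal to $\bigl(\sqrt{\mu+\varepsilon}+\sqrt x\bigr)^2$. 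Letting $\varepsilon\to0$ gives the claim.

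\emph{Degenerate case.} If $x=0$, then $A=0\oplus D$, and $\muc(A)\le\muc(D)$ follows directly by taking $C=\lambda I\oplus S$ and $B=0\oplus R$.

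\emph{Main point.} The only nontrivial step is to put the scalar shift $\lambda$ on $\mathcal H_1$, not on $\mathcal H_2$. Shifting $S$ by $\lambda I$ would inflate $\norm C$ to about $2\beta+\gamma$ and lose the exact square. With the shift on $\mathcal H_1$, the zero block contributes nothing to $B$, and the two cross terms balance exactly under the geometric-mean choice of $\beta$ and $\gamma$.
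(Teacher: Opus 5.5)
Your proposal is correct and is essentially the paper's proof. The paper also puts the scalar shift on the zero block, taking $C=\diag((1+\eta)I,Q)$ with $\|Q\|=1$, solves both off-diagonal equations with the resolvent of $Q$, and sets $\eta=\sqrt{\max\{\|X\|,\|Y\|\}/(\muc(D)+\xi)}$. Your normalization $\|S\|=\beta$, $\lambda=\beta+\gamma$ is the same construction after rescaling the factors reciprocally, with $\eta=\gamma/\beta$.
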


\subsection{Basic reductions and hollowization}

We begin with the elementary invariance and rescaling properties of the
homogeneous commutator cost.  These will allow us to normalize matrices and
balance the two commutator factors without further comment.

\begin{lemma}\label{lem:basic-mu}
Let $A\in \Mn$ be a traceless matrix.  Then the following hold.
\begin{enumerate}[label=\textup{(\roman*)},leftmargin=2.2em]
\item For every $t\in\mathbb C$, $\muc(tA)=\abs{t}\muc(A)$.
\item For every $U\in\Un$, $\muc(U^*AU)=\muc(A)$.
\item For every invertible matrix $S \in 
\Gn$, $\muc(S^{-1}AS)\le(\norm{S}\norm{S^{-1}})^2\muc(A)$.
\item For every $\eta>0$ there are $B,C\in\Mn$ with $A=\comm{B}{C}$ and
$\norm{B}=\norm{C}\le\sqrt{\muc(A)+\eta}$.
\end{enumerate}
\end{lemma}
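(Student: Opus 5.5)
Each item follows directly from the definition of $\muc$ as an infimum over representations. In every case we transform an admissible pair $(B,C)$ for one matrix into an admissible pair for the other and compare the norm products. We first observe that the infimum is attained in finite dimensions. This is not actually needed, since we only use $\eta$-approximate minimizers, but it explains the role of $\eta$ in (iv).

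For (i), if $A=[B,C]$ then $tA=[tB,C]$, so $\muc(tA)\le\abs{t}\muc(A)$. For $t\ne0$, applying the same bound to $t^{-1}$ and $tA$ gives the reverse inequality. For $t=0$ both sides vanish, since $0=[0,0]$.

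For (ii), conjugation by a unitary is an algebra automorphism preserving the operator norm. Thus $A=[B,C]$ if and only if $U^*AU=[U^*BU,U^*CU]$, and the norm products agree. The infima therefore coincide.

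For (iii), if $A=[B,C]$ then $S^{-1}AS=[S^{-1}BS,S^{-1}CS]$. We have $\norm{S^{-1}BS}\,\norm{S^{-1}CS}\le(\norm S\norm{S^{-1}})^2\norm B\norm C$. Taking the infimum gives the stated bound.

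For (iv), we use that $[\lambda B,\lambda^{-1}C]=[B,C]$ for every $\lambda>0$. Choose $B_0,C_0$ with $A=[B_0,C_0]$ and $\norm{B_0}\norm{C_0}<\muc(A)+\eta$. If both factors are nonzero, take $\lambda=(\norm{C_0}/\norm{B_0})^{1/2}$; this makes the two norms equal to $\sqrt{\norm{B_0}\norm{C_0}}$. If one factor is zero, then $A=0$. In that case we need two matrices of equal norm at most $\sqrt\eta$ that commute. We take $B=C=\sqrt{\eta}\,I$, or simply $B=C=0$ if equality of norms is read as both being zero.

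The only mild point is this degenerate case, together with the fact that the strict inequality in the choice of $(B_0,C_0)$ requires $\eta>0$. There is no real obstacle.
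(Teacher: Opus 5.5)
Your proposal is correct and is essentially the paper's proof: for (i)--(iii) you transform a representation by scaling, unitary conjugation, or similarity and take infima, and for (iv) you reciprocally rescale a near-optimal pair, handling $A=0$ separately. In the degenerate case, $B=C=0$ already has equal norms with no special reading needed, and the attainment remark can simply be dropped since it is not used.
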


\begin{proof}
Parts \textup{(i)}--\textup{(iii)} follow by rescaling or conjugating a
commutator representation and then taking infima.  For \textup{(iv)}, choose
$A=[B,C]$ with $\|B\|\,\|C\|\le\muc(A)+\eta$.  If $A\ne0$, reciprocal
rescaling of $B$ and $C$ makes their norms equal without changing either the
commutator or the product of the norms.  If $A=0$, take $B=C=0$.
\end{proof}

A matrix is \emph{hollow} if its diagonal is zero.  In a hollow basis,
every coordinate compression remains traceless, which is essential when
we apply an inductive commutator bound.  Part~\textup{(i)} below is
Fillmore's theorem \cite{Fillmore}.  Part~\textup{(ii)} is the simultaneous
hollowization theorem of Damm and Fa\ss bender
\cite[Section~2.4, Proposition~2.13(b)]{DammFassbender}
(Proposition~13(b) in the preprint).  Its two exceptional coordinates will
be isolated in the approximate-rank argument.

\begin{theorem}\label{thm:hollow}
The following assertions hold.
\begin{enumerate}[label=\textup{(\roman*)},leftmargin=2.2em]
\item If $A\in \Mn$ is traceless, there is a unitary matrix $U\in \Un$ such that $U^*AU$ has zero diagonal.
\item If $n\ge2$ and $H_1,H_2,H_3\in \Mn$ are traceless Hermitian matrices, there is a unitary matrix $U\in \Un$ such that $U^*H_1U$ and $U^*H_2U$ have zero diagonal, and the diagonal of $U^*H_3U$ is $(0,\ldots,0,t,-t)$ for some real $t$.
\end{enumerate}
\end{theorem}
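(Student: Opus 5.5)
Both parts are classical, and the paper attributes them to Fillmore and to Damm--Fa\ss bender; I would reprove them briefly so the exceptional coordinates in (ii) are explicit.

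For \textup{(i)}, I will argue by induction on $n$. The case $n=1$ is trivial, since $A=0$. For $n\ge2$, the numerical range $W(A)=\{\langle Av,v\rangle:\|v\|=1\}$ is convex by Toeplitz--Hausdorff. It contains the average of the diagonal entries in any orthonormal basis, namely $\tau(A)=0$. Hence there is a unit vector $u_1$ with $\langle Au_1,u_1\rangle=0$. Extend $u_1$ to an orthonormal basis. The compression of $A$ to $u_1^\perp$ is again traceless, because its trace is $\Tr A-\langle Au_1,u_1\rangle=0$. The induction hypothesis gives an orthonormal basis of $u_1^\perp$ in which this compression is hollow. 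Adjoining $u_1$ to that basis yields $U$.

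For \textup{(ii)}, I will use the same inductive scheme, but each step now needs a unit vector $u$ with $\langle H_1u,u\rangle=\langle H_2u,u\rangle=0$ and $\langle H_3u,u\rangle=0$. This is the joint numerical range of three Hermitian matrices, $W(H_1,H_2,H_3)\subset\mathbb R^3$. The point $0$ lies in its convex hull, again because $0=\tau(H_k)$ for each $k$. The joint numerical range of three Hermitian matrices is convex when $n\ge3$; this is Au-Yeung--Poon. So for $n\ge3$ a vector $u$ exists with all three values zero. We remove it, pass to $u^\perp$, where all three compressions stay traceless, and recurse. The recursion stops at dimension $2$, and convexity of the triple range fails there, which produces the exceptional pair. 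In dimension $2$ we have traceless Hermitian $2\times2$ matrices, which correspond to vectors $h_k\in\mathbb R^3$ via Pauli coordinates. A unit vector $u$ corresponds to a point $b$ on the Bloch sphere, and $\langle H_ku,u\rangle=h_k\cdot b$. Choose $b$ orthogonal to both $h_1$ and $h_2$. This makes the first diagonal entries of $H_1$ and $H_2$ zero, and since the compressions are traceless the second entries vanish too. The diagonal of $H_3$ is then $(h_3\cdot b,-h_3\cdot b)=(t,-t)$, as claimed. If $n=2$ from the start, this case is simply the whole argument.

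The main obstacle is the convexity input for three matrices in dimension at least $3$. I would cite it rather than reprove it; the paper itself cites Damm--Fa\ss bender, whose proof relies on exactly this fact. If a self-contained route were needed, I would first try the following. Use (i)-type steps to reduce to vanishing of $H_1$ and $H_2$ along a $3$-dimensional compression. Then check directly that in dimension $3$ the zero set of $(\langle H_1u,u\rangle,\langle H_2u,u\rangle)$ on the sphere is connected enough for $\langle H_3u,u\rangle$ to change sign on it. By the intermediate value theorem this gives a vector where all three values vanish.
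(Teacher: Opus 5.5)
Your argument is correct and essentially matches the paper's: you choose $n-2$ orthonormal vectors one at a time using Au-Yeung--Poon convexity of the joint numerical range (with $0$ in it as the barycenter of diagonal entries), then treat the remaining two-dimensional compression separately. The differences are cosmetic. The paper handles the $2\times2$ step by applying Fillmore's theorem to the compression of $H_1+iH_2$, which is equivalent to your choice of a Bloch vector $b\perp h_1,h_2$, and it cites part (i) rather than giving your Toeplitz--Hausdorff induction; the speculative self-contained route in your last paragraph is not needed.
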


For completeness, we recall Damm and Fa\ss bender's proof of
part~\textup{(ii)}.  It uses the convexity theorem of Au-Yeung and Poon
\cite{AuYeungPoon} for the joint numerical range of three Hermitian forms
in dimension at least three, followed by Fillmore's theorem on the
remaining two-dimensional subspace.

\begin{proof}[Proof of Theorem~\textup{\ref{thm:hollow}(ii)}]
We first choose orthonormal vectors $v_1,\ldots,v_{n-2}$ such that
\[
 \langle H_kv_i,v_i\rangle=0
 \quad\text{for }k=1,2,3\text{ and }i=1,\ldots,n-2;
\]
for $n=2$ this step is vacuous.  Suppose $v_1,\ldots,v_{j-1}$ have been chosen with $j\le n-2$, and let $\mathcal V$ be the orthogonal complement of their span, so that $\dim \mathcal V=n-j+1\ge3$.  The compressions of $H_1,H_2,H_3$ to $\mathcal V$ are traceless Hermitian matrices, since the trace of each compression equals the full trace minus $\sum_{i<j}\langle H_kv_i,v_i\rangle=0$.  The joint numerical range
\[
 W=\bigl\{\bigl(\langle H_1v,v\rangle,\langle H_2v,v\rangle,\langle H_3v,v\rangle\bigr):
 v\in \mathcal V,\ \norm{v}=1\bigr\}\subset\mathbb R^3
\]
of three Hermitian forms on a space of dimension at least three is convex \cite{AuYeungPoon}.  Averaging over an orthonormal basis of $\mathcal V$ exhibits $(0,0,0)$ as the barycenter of finitely many points of $W$, so $(0,0,0)\in W$; choose $v_j\in \mathcal V$ accordingly.

Let $\mathcal V_0$ be the orthogonal complement of $v_1,\ldots,v_{n-2}$, a two-dimensional space; as above, the compressions of $H_1,H_2,H_3$ to $\mathcal V_0$ are traceless.  The compression of $H_1+iH_2$ to $\mathcal V_0$ is a $2\times2$ traceless matrix, so by part \textup{(i)} there is an orthonormal basis $w_1,w_2$ of $\mathcal V_0$ in which it is hollow; equivalently, the compressions of $H_1$ and $H_2$ are hollow in this basis, because $\langle(H_1+iH_2)w,w\rangle$ has real part $\langle H_1w,w\rangle$ and imaginary part $\langle H_2w,w\rangle$.  The compression of $H_3$ to $\mathcal V_0$ is a traceless Hermitian $2\times2$ matrix, so its two diagonal entries in the basis $w_1,w_2$ are opposites.  The unitary matrix sending the standard basis to $v_1,\ldots,v_{n-2},w_1,w_2$ has the required properties.
\end{proof}

We conclude the preliminary reductions with four elementary facts about block
compressions, trace norms, and resolvents.  They will be invoked repeatedly in
the two gluing proofs.

\begin{lemma}\label{lem:block-facts}
Let $\mathcal H$ be a Hilbert space.
\begin{enumerate}[label=\textup{(\roman*)},leftmargin=2.2em]
\item For a family $\mathcal P=(P_1,\ldots,P_s)$ of pairwise orthogonal
projections on $\mathcal H$ with $\sum_jP_j=I_{\mathcal H}$, the
block-diagonal map $\mathbb E_{\mathcal P}:\mathcal B(\mathcal H)\to
\mathcal B(\mathcal H)$, defined by
$\mathbb E_{\mathcal P}(X)=\sum_jP_jXP_j$, is contractive in operator
and trace norms.
\item For orthogonal projections $P,Q$ on $\mathcal H$, the corner map
$\mathcal B(\mathcal H)\to\mathcal B(Q\mathcal H,P\mathcal H)$,
$X\mapsto PX|_{Q\mathcal H}$, is contractive in both norms.
\item If $\mathcal H=\mathcal H_1\oplus\mathcal H_2$,
$Z\in\mathcal B(\mathcal H_2,\mathcal H_1)$ and
$W\in\mathcal B(\mathcal H_1,\mathcal H_2)$, then
$M=\left(\begin{smallmatrix}0&Z\\W&0\end{smallmatrix}\right)
\in\mathcal B(\mathcal H)$ satisfies
\[
 \|M\|=\max\{\|Z\|,\|W\|\},
 \qquad
 \|M\|_1=\|Z\|_1+\|W\|_1.
\]
\item For $T\in\mathcal B(\mathcal H)$ with $\|T\|\le1$ and $\eta>0$,
both $(1+\eta)I_{\mathcal H}-T$ and $T-(1+\eta)I_{\mathcal H}$ are
invertible, with inverse norms at most $\eta^{-1}$.
\end{enumerate}
\end{lemma}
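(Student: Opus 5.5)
We treat the four assertions separately; each is elementary, and the only point needing care is the trace-norm claims, which we handle by duality with the operator norm.

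For \textup{(i)}, we use the unitaries $U_\theta=\sum_j e^{i\theta_j}P_j$ with $\theta\in[0,2\pi)^s$. Averaging over the torus with normalized Haar measure gives
\[
 \mathbb E_{\mathcal P}(X)=\int U_\theta XU_\theta^*\,d\theta ,
\]
because the $(i,j)$ block of the integrand is multiplied by $e^{i(\theta_i-\theta_j)}$, whose average vanishes for $i\ne j$. Every unitarily invariant norm is unchanged by conjugation with a unitary, and by the triangle inequality for integrals the average has norm at most $\|X\|$. This gives contractivity in the operator norm and in the trace norm at the same time.

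For \textup{(ii)}, the extension-by-zero convention identifies the corner with $PXQ$. We have $\|PXQ\|\le\|X\|$ by submultiplicativity. For the trace norm we use the ideal property $\|PXQ\|_1\le\|P\|\,\|X\|_1\,\|Q\|$, which follows from the duality $\|Y\|_1=\sup_{\|K\|\le1}|\Tr(YK)|$ and the cyclic rewriting $\Tr(PXQK)=\Tr(X\,QKP)$. It remains to check that the trace norm of $PX|_{Q\mathcal H}$, viewed as a map $Q\mathcal H\to P\mathcal H$, equals $\|PXQ\|_1$ on $\mathcal H$. This holds because $|PXQ|$ vanishes on $(Q\mathcal H)^\perp$ and agrees with $|PX|_{Q\mathcal H}|$ on $Q\mathcal H$.

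For \textup{(iii)}, we compute
\[
 M^*M=\begin{pmatrix}W^*W&0\\0&Z^*Z\end{pmatrix},
\]
so $|M|=|W|\oplus|Z|$. The operator norm is the square root of the larger of $\|W^*W\|$ and $\|Z^*Z\|$, which gives $\|M\|=\max\{\|Z\|,\|W\|\}$. Taking traces of $|M|$ gives $\|M\|_1=\Tr|W|+\Tr|Z|=\|W\|_1+\|Z\|_1$.

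For \textup{(iv)}, we write $(1+\eta)I-T=(1+\eta)\bigl(I-T/(1+\eta)\bigr)$. Since $\|T/(1+\eta)\|\le(1+\eta)^{-1}<1$, the Neumann series converges and bounds the inverse norm by
\[
 (1+\eta)^{-1}\sum_{k\ge0}(1+\eta)^{-k}=\eta^{-1}.
\]
The matrix $T-(1+\eta)I$ is the negative of this one, so it has the same inverse norm.

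None of these steps presents a real obstacle. The only subtlety is the identification of rectangular trace norms in \textup{(ii)}.
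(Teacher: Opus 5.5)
Your proposal is correct and follows essentially the same route as the paper: averaging unitary conjugations by Haar phases for (i), the ideal property of the operator and trace norms for (ii), the block-diagonal form of $M^*M$ for (iii), and the Neumann series for (iv). In (ii) you also prove the trace-norm ideal inequality by duality and check that $|PXQ|$ is the extension by zero of the modulus of the corner; these are details the paper leaves implicit rather than a different argument.
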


\begin{proof}
For (i), let $\zeta_1,\ldots,\zeta_s$ be independent Haar phases and put $D_\zeta=\sum_j\zeta_jP_j$.  Then
\[
 \mathbb E_{\mathcal P}(X)
 =\int_{\mathbb T^s}D_\zeta XD_\zeta^*\,dm(\zeta).
\]
Every conjugation in the integral is an isometry in both norms, so the assertion follows from the triangle inequality.  Part (ii) follows directly from
$\|PXQ\|\le\|P\|\,\|X\|\,\|Q\|$ and the corresponding ideal property of trace norm.

For (iii), if $M=\left(\begin{smallmatrix}0&Z\\W&0\end{smallmatrix}\right)$, then
\[
 M^*M=\begin{pmatrix}W^*W&0\\0&Z^*Z\end{pmatrix}.
\]
Thus the singular values of $M$ are the singular values of $W$ together with those of $Z$, counted with multiplicity.  Taking the largest singular value and the sum of all singular values gives the two formulas.

For (iv), write
\[
 (1+\eta)I_{\mathcal H}-T=(1+\eta)\left(I_{\mathcal H}-\frac{T}{1+\eta}\right).
\]
The Neumann series converges because $\|T/(1+\eta)\|<1$ and gives inverse norm at most
$(1+\eta)^{-1}(1-(1+\eta)^{-1})^{-1}=\eta^{-1}$.  The second inverse differs only by a sign.
\end{proof}

\subsection{The Cauchy multiplier and the gluing estimates}\label{sec:cauchy}

Estimating each off-diagonal block separately would introduce a loss when
the blocks are reassembled.  Instead, we estimate the full block operator
by representing the inverse Cauchy kernel as an average of unitary
conjugations.  This is the Fourier method used by Bhatia, Davis, and
McIntosh \cite{BDM}; see also Bhatia and Rosenthal
\cite[Sections~9--10]{BhatiaRosenthal}.  The heat kernel gives a convenient
self-contained proof for the lattice kernel used here.  Once this
multiplier bound is available, scalar shifts solve all the block equations
at once.

Throughout, $\mathbb T^2=(\mathbb R/2\pi\mathbb Z)^2$ carries normalized Haar
measure $m$, and the Fourier coefficients of $f\in L_1(\mathbb T^2)$ are
$\widehat f(k)=\int_{\mathbb T^2}f(x)e^{-ik\cdot x}\,dm(x)$ for
$k\in\mathbb Z^2$.  For $t>0$ the periodic heat kernel is
\[
 p_t(x)=\sum_{k\in\mathbb Z^2}e^{-t|k|^2}e^{ik\cdot x}
 \quad\text{for }x\in\mathbb T^2,
\]
so that $\widehat p_t(k)=e^{-t|k|^2}$.  Writing
$g_t(x)=(4\pi t)^{-1}e^{-|x|^2/(4t)}$ for the Euclidean heat kernel on
$\mathbb R^2$, Poisson summation gives
$p_t(x)=(2\pi)^2\sum_{k\in\mathbb Z^2}g_t(x+2\pi k)$.

The following standard heat-kernel estimate makes the Fourier
representation integrable in time.  We include the short proof to keep
track of the normalization of Haar measure.

\begin{lemma}\label{lem:heat-gradient}
Let $p_t$ be the periodic heat kernel on $\mathbb T^2$, with $t>0$.
There are absolute constants $C,c>0$ such that
\[
 \|\nabla p_t\|_{L_1(\mathbb T^2)}\le
 \begin{cases}
 Ct^{-1/2}&\text{for }0<t\le1,\\[1mm]
 Ce^{-ct}&\text{for }t\ge1.
 \end{cases}
\]
\end{lemma}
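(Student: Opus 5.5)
We treat the two time regimes separately, using the Poisson-summation representation for small $t$ and the Fourier series for large $t$.

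\emph{Small times, $0<t\le1$.}  Since $p_t(x)=(2\pi)^2\sum_{k\in\mathbb Z^2}g_t(x+2\pi k)$ and the series of gradients converges locally uniformly, we may differentiate termwise.  The function $\nabla p_t$ is the periodization of $(2\pi)^2\nabla g_t$.  Integrating over a fundamental domain with respect to the normalized measure $dm=(2\pi)^{-2}dx$, the periodization unfolds to an integral over $\mathbb R^2$.  We obtain
\[
 \|\nabla p_t\|_{L_1(\mathbb T^2)}\le\int_{\mathbb R^2}|\nabla g_t(x)|\,dx .
\]
From $\nabla g_t(x)=-\frac{x}{2t}g_t(x)$ and the substitution $x=\sqrt t\,y$, this integral equals
\[
 t^{-1/2}\int_{\mathbb R^2}\frac{|y|}{2}\,g_1(y)\,dy=C_0t^{-1/2}.
\]
This bound in fact holds for every $t>0$; in particular it gives $\|\nabla p_t\|_{L_1}\le C_0$ at $t=1$.

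\emph{Large times, $t\ge1$.}  The zero mode is constant and has zero gradient.  Hence
\[
 \nabla p_t(x)=\sum_{k\ne0}ik\,e^{-t|k|^2}e^{ik\cdot x}.
\]
We bound the $L_1$ norm by the sup norm, which is at most
\[
 \sum_{k\ne0}|k|e^{-t|k|^2}\le e^{-(t-1)}\sum_{k\ne0}|k|e^{-|k|^2},
\]
using $|k|^2\ge1$, so that $e^{-t|k|^2}\le e^{-(t-1)|k|^2}e^{-|k|^2}\le e^{-(t-1)}e^{-|k|^2}$.  The last sum is a finite absolute constant, which gives $Ce^{-t}$.

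Alternatively, one can use the semigroup property $p_t=p_{t-1/2}*p_{1/2}$, but the direct Fourier bound above suffices.

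The only point requiring care is the normalization in the small-time case, where the factor $(2\pi)^2$ cancels against the Haar normalization.  Everything else is routine.  Taking $C=\max\{C_0,\,e\sum_{k\ne0}|k|e^{-|k|^2}\}$ and $c=1$ completes the proof.
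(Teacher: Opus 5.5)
Your proposal is correct and matches the paper's proof essentially step by step. For $0<t\le1$ you unfold the Poisson-summation periodization over a fundamental domain, where the $(2\pi)^2$ factors cancel, and rescale $x=\sqrt t\,y$; for $t\ge1$ you bound the $L_1$ norm by the absolutely convergent Fourier series of $\nabla p_t$. The only difference is cosmetic: you use $e^{-t|k|^2}\le e^{-(t-1)}e^{-|k|^2}$ to get $c=1$, whereas the paper factors out $e^{-t/2}$.
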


\begin{proof}
For $0<t\le1$, use the periodization $p_t(x)=(2\pi)^2\sum_kg_t(x+2\pi k)$.  Let $F=[-\pi,\pi)^2$ be a fundamental domain, on which $m$ is $(2\pi)^{-2}$ times Lebesgue measure.  The two factors of $(2\pi)^2$ cancel, and Tonelli's theorem gives
\[
 \|\nabla p_t\|_{L_1(\mathbb T^2)}
 =\int_F\left|\sum_{k\in\mathbb Z^2}\nabla g_t(x+2\pi k)\right|dx
 \le\sum_{k\in\mathbb Z^2}\int_F|\nabla g_t(x+2\pi k)|dx
 =\int_{\mathbb R^2}|\nabla g_t(x)|dx.
\]
The last integral equals $Ct^{-1/2}$ by the change of variables $x=\sqrt t\,y$.

For $t\ge1$, the Fourier series is absolutely convergent and
\[
 \|\nabla p_t\|_1\le\|\nabla p_t\|_\infty
 \le\sum_{k\in\mathbb Z^2\setminus\{0\}}|k|e^{-t|k|^2}.
\]
Since $|k|^2\ge1$, one may factor out $e^{-t/2}$ and sum
$|k|e^{-|k|^2/2}$, obtaining the claimed exponential bound.
\end{proof}

Integrating the heat-kernel estimate now yields the dimension-free multiplier
bound that drives finite-block gluing.

\begin{lemma}\label{lem:cauchy}
Let $\mathcal H_1,\ldots,\mathcal H_s$ be Hilbert spaces and let
$(a_1,b_1),\ldots,(a_s,b_s)$ be distinct points of $\mathbb Z^2$.
The linear map $\mathcal C$ from the zero-block-diagonal subspace of
$\mathcal B(\bigoplus_{j=1}^s\mathcal H_j)$ to itself, defined by
\[
 (\mathcal CX)_{ij}=\frac{X_{ij}}{(a_i-a_j)+\mathrm i(b_i-b_j)}
 \quad(i\ne j),
 \qquad
 (\mathcal CX)_{ii}=0,
\]
satisfies $\|\mathcal CX\|\le\CCau\|X\|$ for every $X$,
where $\CCau>0$ is absolute.
\end{lemma}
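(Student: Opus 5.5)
The plan is to write the multiplier as an average of unitary conjugations against an integrable kernel on $\mathbb T^2$, as announced at the start of Section~\ref{sec:cauchy}. Let $P_j$ denote the projection onto $\mathcal H_j$. For $x\in\mathbb T^2$ put
\[
 D_x=\sum_{j=1}^s e^{\mathrm i(a_jx_1+b_jx_2)}P_j ,
\]
which is unitary. Write $k_{ij}=(a_i-a_j,b_i-b_j)\in\mathbb Z^2$. The $(i,j)$ block of $D_xXD_x^*$ is $e^{\mathrm i k_{ij}\cdot x}X_{ij}$, and $k_{ij}\ne0$ for $i\ne j$ because the lattice points are distinct.

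Suppose we have $g\in L_1(\mathbb T^2)$ with $\widehat g(-k)=1/(k_1+\mathrm i k_2)$ for all $k\ne0$. Then
\[
 \mathcal CX=\int_{\mathbb T^2}g(x)\,D_xXD_x^*\,dm(x)
\]
holds blockwise for every zero-block-diagonal $X$. The diagonal blocks on the right vanish because $X_{ii}=0$, whatever the value of $\widehat g(0)$. Since each conjugation is an operator-norm isometry, the triangle inequality gives $\|\mathcal CX\|\le\|g\|_{L_1}\|X\|$. So it suffices to take $\CCau=\|g\|_{L_1}$.

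To construct $g$, I will first build $f$ with $\widehat f(k)=1/(k_1+\mathrm i k_2)=(k_1-\mathrm i k_2)/|k|^2$ for $k\ne0$ and $\widehat f(0)=0$, and then set $g(x)=f(-x)$. For $k\ne0$ use the identity
\[
 |k|^{-2}=\int_0^\infty e^{-t|k|^2}\,dt .
\]
Since $\widehat{\partial_\ell p_t}(k)=\mathrm i k_\ell e^{-t|k|^2}$, the function $f_t=-\mathrm i\,\partial_1p_t-\partial_2p_t$ satisfies
\[
 \widehat{f_t}(k)=(k_1-\mathrm i k_2)e^{-t|k|^2},
\]
and this vanishes at $k=0$. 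Define $f=\int_0^\infty f_t\,dt$ as an $L_1(\mathbb T^2)$-valued Bochner integral. By Lemma~\ref{lem:heat-gradient}, $\|f_t\|_{L_1}\le\sqrt2\,\|\nabla p_t\|_{L_1}$, and this bound is integrable on $(0,\infty)$: it is at most $Ct^{-1/2}$ near $0$ and at most $Ce^{-ct}$ at infinity. Hence
\[
 \|f\|_{L_1}\le\sqrt2\,C\Bigl(\int_0^1t^{-1/2}\,dt+\int_1^\infty e^{-ct}\,dt\Bigr),
\]
which is an absolute constant.

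The Fourier coefficient is a bounded linear functional on $L_1$, so it commutes with the Bochner integral. Therefore
\[
 \widehat f(k)=\int_0^\infty(k_1-\mathrm i k_2)e^{-t|k|^2}\,dt=\frac{k_1-\mathrm i k_2}{|k|^2}=\frac1{k_1+\mathrm i k_2}
\]
for $k\ne0$, and $\widehat f(0)=0$. Passing to $g$ only reflects the argument, which preserves the $L_1$ norm.

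The main point needing care is the measurability and integrability bookkeeping. First, $t\mapsto f_t$ is continuous into $L_1$ for $t>0$, since $p_t$ is smooth in $(t,x)$; the $t^{-1/2}$ singularity at $t=0$ is integrable, so the Bochner integral makes sense. Second, the operator-valued integral defining $\mathcal CX$ must be justified: because everything is finite-dimensional, it can be checked entrywise by Fubini. The analytic content is entirely the heat-kernel gradient bound, which is already available.
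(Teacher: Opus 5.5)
Your proposal is correct and follows the paper's proof essentially verbatim. You use the same kernel $f=\int_0^\infty(-\mathrm i\partial_1-\partial_2)p_t\,dt$, bounded in $L_1$ via Lemma~\ref{lem:heat-gradient}, and represent $\mathcal C$ as an $f$-weighted average of block-diagonal unitary conjugations. The only difference is cosmetic: you put the phase $e^{+\mathrm i(a_jx_1+b_jx_2)}$ in $D_x$ and compensate with the reflection $g(x)=f(-x)$, whereas the paper uses $e^{-\mathrm i(a_j\theta_1+b_j\theta_2)}$ and integrates against $f$ directly.
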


\begin{proof}
By Lemma~\ref{lem:heat-gradient}, the Bochner integral
\[
 f=\int_0^\infty(-i\partial_x-\partial_y)p_t\,dt
\]
converges in $L_1(\mathbb T^2)$.  Indeed, its $L_1$ norm is bounded by
\[
 \int_0^1 Ct^{-1/2}\,dt+\int_1^\infty Ce^{-ct}\,dt<\infty.
\]
For $(a,b)\neq(0,0)$, differentiation of the Fourier series and Fubini's theorem give
\[
 \widehat f(a,b)
 =\int_0^\infty(a-ib)e^{-t(a^2+b^2)}\,dt
 =\frac{a-ib}{a^2+b^2}
 =\frac1{a+ib},
\]
whereas $\widehat f(0,0)=0$.

For $\theta=(\theta_1,\theta_2)$ define the block-diagonal unitary matrix
\[
 D_\theta\big|_{\mathcal H_j}
 =e^{-i(a_j\theta_1+b_j\theta_2)}I_{\mathcal H_j}.
\]
The $(i,j)$ block of $D_\theta XD_\theta^*$ is
$e^{-i((a_i-a_j)\theta_1+(b_i-b_j)\theta_2)}X_{ij}$, so integrating against $f$ reproduces the Fourier coefficient $\widehat f(a_i-a_j,b_i-b_j)$ in each block:
\[
 \int_{\mathbb T^2}f(\theta)D_\theta XD_\theta^*\,dm(\theta)
 =\mathcal CX;
\]
the diagonal blocks vanish because $X_{ii}=0$ and $\widehat f(0,0)=0$.  Since conjugation by $D_\theta$ is an operator-norm isometry,
\[
 \|\mathcal CX\|
 \le\int_{\mathbb T^2}|f(\theta)|\,\|X\|\,dm(\theta)
 \le\|f\|_1\|X\|.
\]
The proof uses no scalar-block assumption, so it applies verbatim to operator-valued and rectangular blocks.
\end{proof}

With the multiplier estimate in hand, we can solve all off-diagonal Sylvester
equations simultaneously and prove the first gluing lemma.

\begin{proof}[Proof of Lemma~\ref{lem:finite-gluing}]
Let $P_j$ be the orthogonal projection of $\C^n$ onto $\mathcal H_j$.  Relative to the orthogonal decomposition $\C^n=\mathcal H_1\oplus\cdots\oplus \mathcal H_s$, write $A_{ij}=P_iA|_{\mathcal H_j}:\mathcal H_j\longrightarrow \mathcal H_i$. Thus $A_{jj}=A_j$, and the block matrix $(A_{ij})$ is simply the original operator $A$ expressed relative to the chosen projections.  The aim is to retain prescribed commutator representations of the diagonal compressions $A_j$ while solving all off-diagonal equations for $A_{ij}$ at once.

Put $q=\lceil\sqrt s\rceil$ and choose distinct pairs
$(a_j,b_j)\in\{0,\ldots,q-1\}^2$.  Set $z_j=q^{-1}(a_j+ib_j)$. Then $|z_j|\le\sqrt2$.  Fix a small absolute number $\gamma>0$, whose value will be chosen after the Cauchy-multiplier norm is known.

Fix $\xi>0$.  For each $j$, choose
$A_j=[R_j,S_j]$ with $\|R_j\|\,\|S_j\|\le\muc(A_j)+\xi$. If $A_j\neq0$, reciprocal rescaling allows us to impose
$\|S_j\|=\gamma/q$; then
\[
 \|R_j\|\le\frac q{\gamma}
 \left(\max_{\ell}\muc(A_{\ell\ell})+\xi\right).
\]
If $A_j=0$, take $R_j=S_j=0$.  Define
\[
 C_0=\bigoplus_{j=1}^s(z_jI_{\mathcal H_j}+S_j),
 \qquad
 B_0=\bigoplus_{j=1}^sR_j.
\]
Since scalar matrices commute with everything, the $j$th diagonal block of $[B_0,C_0]$ is $[R_j,S_j]=A_j$.

Let $\mathcal X$ be the subspace of block matrices with zero block diagonal.  For $X\in\mathcal X$ set
\[
 (\mathcal TX)_{ij}=(z_j-z_i)X_{ij},
 \qquad
 (\mathcal SX)_{ij}=X_{ij}S_j-S_iX_{ij}
 \quad\text{for }i\neq j.
\]
The inverse of $\mathcal T$ is division by $z_j-z_i$.  Because
\[
 z_j-z_i=q^{-1}\bigl((a_j-a_i)+i(b_j-b_i)\bigr),
\]
Lemma~\ref{lem:cauchy} gives
\[
 \|\mathcal T^{-1}\|_{\mathcal X\to\mathcal X}
 \le \CCau{}q.
\]
Since
\[
 \mathcal SX=X\Bigl(\bigoplus_jS_j\Bigr)
             -\Bigl(\bigoplus_jS_j\Bigr)X,
\]
we have
\[
 \|\mathcal S\|
 \le2\max_j\|S_j\|
 \le\frac{2\gamma}{q}.
\]
Choose $\gamma$ so that $2\CCau\gamma\le1/2$.  Then
$\|\mathcal T^{-1}\mathcal S\|\le1/2$, and the Neumann series gives
\[
 (\mathcal T+\mathcal S)^{-1}
 =(I+\mathcal T^{-1}\mathcal S)^{-1}\mathcal T^{-1},
 \qquad
 \|(\mathcal T+\mathcal S)^{-1}\|
 \le2\CCau{}q.
\]

Put
\[
 A_{\rm off}=A-\bigoplus_{j=1}^sA_j.
\]
By Lemma~\ref{lem:block-facts}(i), the block-diagonal compression has norm at most $\|A\|$, and hence $\|A_{\rm off}\|\le2\|A\|$. There is therefore a unique $X\in\mathcal X$ satisfying
$(\mathcal T+\mathcal S)X=A_{\rm off}$, and $\|X\|\le4\CCau{}q\|A\|$. For $i\neq j$, the block equation reads
\[
 X_{ij}(z_jI+S_j)-(z_iI+S_i)X_{ij}=A_{ij}.
\]
Consequently, with $B=B_0+X$ and $C=C_0$, the diagonal blocks of $[B,C]$ are $A_j$ and the off-diagonal blocks are $A_{ij}$; hence $[B,C]=A$.

Finally,
\[
 \|C\|\le\max_j|z_j|+\max_j\|S_j\|
 \le\sqrt2+\gamma,
\]
and
\[
 \|B\|
 \le\|B_0\|+\|X\|
 \le\frac q{\gamma}
      \left(\max_{\ell}\muc(A_{\ell\ell})+\xi\right)
      +4\CCau{}q\|A\|.
\]
Since $q\le2\sqrt s$, multiplication of the last two estimates gives the asserted bound with an absolute constant.  Letting $\xi\downarrow0$ completes the proof.
\end{proof}

The one-sided estimate uses a simpler resolvent construction and completes the
proof of the two gluing statements.

\begin{proof}[Proof of Lemma~\ref{lem:one-sided}]
Fix $\xi>0$.  Choose $D=[P,Q]$ with
$\|P\|\,\|Q\|\le\muc(D)+\xi$.  If $D\neq0$, reciprocal rescaling permits us to assume $\|Q\|=1$, $\|P\|\le\muc(D)+\xi$; if $D=0$, take $P=Q=0$, and the same inequalities hold.  For a parameter $\eta>0$, set
\[
 C=\begin{pmatrix}(1+\eta)I&0\\0&Q\end{pmatrix},
 \qquad
 B=\begin{pmatrix}0&Z\\W&P\end{pmatrix}.
\]
A direct block multiplication gives
\[
 [B,C]=
 \begin{pmatrix}
 0&ZQ-(1+\eta)Z\\
 (1+\eta)W-QW&[P,Q]
 \end{pmatrix}.
\]
Thus it remains to solve $Z(Q-(1+\eta)I)=X$, $((1+\eta)I-Q)W=Y$. Lemma~\ref{lem:block-facts}(iv) gives solutions with
$\|Z\|\le\|X\|/\eta$ and $\|W\|\le\|Y\|/\eta$.  The diagonal and off-diagonal parts of $B$ satisfy
\[
 \left\|\begin{pmatrix}0&Z\\W&0\end{pmatrix}\right\|
 =\max\{\|Z\|,\|W\|\}
 \le\frac{\max\{\|X\|,\|Y\|\}}{\eta}
\]
by Lemma~\ref{lem:block-facts}(iii), and hence
\[
 \|B\|
 \le\muc(D)+\xi
      +\frac{\max\{\|X\|,\|Y\|\}}{\eta},
 \qquad
 \|C\|\le1+\eta.
\]
We have proved
\[
 \muc(A)
 \le(1+\eta)\left(
       \muc(D)+\xi
       +\frac{\max\{\|X\|,\|Y\|\}}{\eta}
      \right).
\]
If both $\muc(D)+\xi$ and $\max\{\|X\|,\|Y\|\}$ are positive, choose
$\eta=
 \sqrt{\frac{\max\{\|X\|,\|Y\|\}}{\muc(D)+\xi}}$.
The right-hand side then equals
\[
 \left(
   \sqrt{\muc(D)+\xi}
   +\sqrt{\max\{\|X\|,\|Y\|\}}
 \right)^2.
\]
If either quantity vanishes, the same bound follows by taking a limit in $\eta$.  Finally let $\xi\downarrow0$ and take square roots.
\end{proof}
\section{Stable commutator representations}\label{sec:regularity}

The gluing estimates control the cost of reassembling blocks once their
commutator representations are known.  We now construct the representations
that will be used in the trace-gap argument.  The compressions in
Section~\ref{sec:links} will be close to graded matrices with an invertible
link, so we need representations whose factors can be adjusted when the
matrix is perturbed.  Proposition~\ref{prop:graded}(i) gives a uniform
bound for the linearized equation; part (ii) converts it into the
neighborhood estimate needed there.  The freedom to add a block-diagonal
matrix to the second factor lets us improve the linearized equation while
keeping the represented matrix fixed.

We first derive a trace-class Poincar\'e inequality from Hastings's quantum
expanders \cite{Hastings} and Ricard's Mazur-map estimate \cite{Ricard}.
It says that a matrix nearly commuting with two suitable unitaries is close
to a scalar.  We insert these operators into the representation, with a
similarity accounting for the invertible link.  Duality between operator
norm and trace norm then gives norm-controlled solutions of the linearized
equation, and successive corrections absorb the quadratic error.  We state
the analytic estimate and the local regularity proposition before proving
them.

To make the correction problem explicit, for $B,C,X,Y\in\Mn$ we have
\[
 [B+X,C+Y]-[B,C]=[X,C]+[B,Y]+[X,Y].
\]
The first two terms form the linearized equation.  Solving that equation
with corrections controlled by the size of the error leaves only the
quadratic term $[X,Y]$, which can be absorbed by iteration.  The quantity
defined below measures the bound needed for these linear corrections.

Put $M_n^0=\{E\in\Mn:\Tr E=0\}$.  For $Z\in\Mn$, define
\[
 d_1(Z)=\inf_{\lambda\in\mathbb C}\|Z-\lambda I\|_1.
\]
For $B,C\in\Mn$, define
$\mathcal L_{B,C}:\Mn\times\Mn\to M_n^0$ by
$\mathcal L_{B,C}(X,Y)=[X,C]+[B,Y]$, and put
\[
 \Gamma(B,C)=\sup_{Z\in\Mn\setminus\mathbb CI}
 \frac{d_1(Z)}{\|[Z,C^*]\|_1+\|[B^*,Z]\|_1}.
\]
A zero denominator gives $\Gamma(B,C)=\infty$; for $n=1$, set
$\Gamma(B,C)=0$.  A finite value bounds the norm needed to solve the
linearized equation, as Lemma~\ref{lem:linear-surj} will show.  Thus
$\Gamma(B,C)$ measures quantitatively how nearly a nonscalar matrix can
commute with both adjoint factors.  The following consequence of Hastings's
and Ricard's theorems supplies the
unitaries used to obtain such a bound.

\begin{theorem}[Dimension-free $S_1$ Poincar\'e inequality]\label{thm:s1-poincare}
For every $n$, there are unitary matrices $U,V\in\Un$ such that
\[
 \dOne(Z)\le \Kpoin
 \left(\norm{\comm{Z}{U}}_1+\norm{\comm{Z}{V}}_1\right)
\]
for every $Z\in \Mn$, where $\Kpoin$ is absolute.
\end{theorem}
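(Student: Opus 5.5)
The plan follows the route sketched in the introduction: start from an $S_2$ spectral gap, pass to $S_1$ through a signed square root and Ricard's Mazur-map estimate, and finish with a reduction to Hermitian $Z$.

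\emph{Step 1: the $S_2$ spectral gap.} Hastings's theorem supplies, for every $n$, unitaries $U_1,\dots,U_d$ with $d$ absolute such that the channel $\Phi(X)=\frac1{2d}\sum_k(U_kXU_k^*+U_k^*XU_k)$ has second eigenvalue at most $1-\epsilon_0$ on the Hilbert--Schmidt orthocomplement of $\mathbb CI$. Expanding $\langle X,(I-\Phi)X\rangle$ gives
\[
 \|X-\tau(X)I\|_2^2\le C\sum_{k}\|[X,U_k]\|_2^2 .
\]
The statement asks for two unitaries, so I would embed the $d$ expander unitaries in a fixed word structure. One option is to take $U$ to be a generator of a cyclic shift of order coprime to the structure, and $V$ to be $\bigoplus$-type combinations. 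A simpler alternative uses $\|[X,U_kU_l]\|\le\|[X,U_k]\|+\|[X,U_l]\|$: it suffices that $U_1,\dots,U_d$ be fixed words of bounded length in $U,V$, with $U,V$ chosen so that a bounded-length word set is already a quantum expander. Random unitaries work here, since Hastings's theorem applies to any fixed $d\ge2$. So I take $d=2$ directly with $U_1=U$ and $U_2=V$.

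\emph{Step 2: Hermitian $Z$.} Let $Z$ be Hermitian and let $\lambda$ be a median of its eigenvalues. Put $H=Z-\lambda I$ and $W=\operatorname{sgn}(H)|H|^{1/2}$, so that $\|W\|_2^2=\|H\|_1\ge d_1(Z)$. By the median property, at most half of the eigenvalues of $W$ are positive and at most half are negative. For any scalar $\mu$ (real, by Hermitian symmetry) this gives $\|W-\mu I\|_2^2\ge\frac12\|W\|_2^2$, because either the nonpositive or the nonnegative half of the spectrum contributes at least $\mu^2$ per eigenvalue plus its own mass. Hence
\[
 d_1(Z)\le\|H\|_1=\|W\|_2^2\le 2\|W-\tau(W)I\|_2^2\le 2C\bigl(\|[W,U]\|_2^2+\|[W,V]\|_2^2\bigr).
\]
Next, $[W,U]$ measures $W-UWU^*$. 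Ricard's estimate for the Mazur map $M_{1,2}(H)=\operatorname{sgn}(H)|H|^{1/2}$, namely $\|M_{1,2}(A)-M_{1,2}(B)\|_2^2\le C\|A-B\|_1$ for self-adjoint $A,B$ (the $p=1$, $q=2$ Hölder-type bound), applied with $A=H$ and $B=UHU^*$ and using $M(UHU^*)=UM(H)U^*$, gives $\|[W,U]\|_2^2\le C\|[H,U]\|_1=C\|[Z,U]\|_1$. The same holds for $V$. This proves the inequality for Hermitian $Z$.

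\emph{Step 3: general $Z$.} Write $Z=X+iY$ with $X,Y$ Hermitian. Then $[X,U]$ is not simply a part of $[Z,U]$, but $[Z^*,U]=-[Z,U^*]^*$ and $\|[Z,U^*]\|_1=\|U^*[U,Z]U^*\|_1=\|[Z,U]\|_1$. Therefore $\|[X,U]\|_1\le\|[Z,U]\|_1$, and likewise for $Y$ and for $V$. Combining with $d_1(Z)\le d_1(X)+d_1(Y)$ finishes the proof with $\Kpoin$ doubled.

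The main obstacle is Step 2: justifying the precise form of Ricard's Mazur-map inequality with constants independent of $n$, and checking the median argument that prevents the scalar part of $W$ from absorbing the mass. Step 1 is essentially a citation.
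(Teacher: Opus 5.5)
Your proposal is correct and follows the paper's proof essentially step by step. The shared ingredients are Hastings's two-unitary Hilbert--Schmidt gap, median centering, and the signed square root; your eigenvalue-pairing bound $\|W-\mu I\|_2^2\ge\tfrac12\|W\|_2^2$ is a valid substitute for the paper's Cauchy--Schwarz estimate on the support of $M_+$. The remaining steps also match the paper: Ricard's Mazur-map estimate combined with unitary equivariance, and the same reduction of a general $Z$ to its Hermitian and skew-Hermitian parts.

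The one thing to repair is Step~1. Hastings's theorem is asymptotic: the spectral gap holds with probability tending to one as $n\to\infty$, so it does not supply the unitaries for every $n$. The finitely many small dimensions need a separate argument. For example, take an irreducible pair (the cyclic shift and a diagonal unitary with distinct entries), use compactness of the $\|\cdot\|_2$ unit sphere of traceless matrices to get a positive constant, and then take the maximum of the constants.
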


The next proposition uses the freedom in choosing commutator factors to
obtain a uniform bound for $\Gamma$.  This is the regular representation
needed to correct perturbations.

\begin{proposition}[Regular representations of graded matrices]\label{prop:graded}
For every $\kappa\ge1$ there are constants $M_{\kappa},\eta_{\kappa}>0$ with
the following property.  Let $\mathcal H$ be a nonzero finite-dimensional
complex Hilbert space and let
\[
 T=\begin{pmatrix}0&P\\Q&0\end{pmatrix}\in\mathcal B(\mathcal H\oplus \mathcal H),
 \qquad
 \norm{T}\le1,
\]
where one of the two blocks $P,Q$ is invertible with inverse of norm at most $\kappa$.
\begin{enumerate}[label=\textup{(\roman*)},leftmargin=2.2em]
\item There are $B,C\in\mathcal B(\mathcal H\oplus\mathcal H)$ with
$T=\comm{B}{C}$, $\norm{B},\norm{C}\le M_{\kappa}$, and
$\Gamma(B,C)\le M_{\kappa}$.
\item If $A\in\mathcal B(\mathcal H\oplus \mathcal H)$ is traceless and $\norm{A-tT}\le\eta_{\kappa}t$ for some $t>0$, then $\muc(A)\le M_{\kappa}t$.
\end{enumerate}
\end{proposition}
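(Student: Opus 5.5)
We reduce first to the case where $Q$ is invertible with $\|Q^{-1}\|\le\kappa$. If instead $P$ is invertible, we conjugate by the swap unitary exchanging the two copies of $\mathcal H$. This interchanges $P$ and $Q$, and both $\Gamma$ and $\muc$ are invariant under unitary conjugation, so nothing is lost.

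\textbf{Part (i).} Write $G=\diag(I,-I)$ and start from the explicit representation $T=[G,C_0]$, where $C_0=GT/2$. For $B$ we take $\beta G$ for a suitable scalar $\beta$. The second factor will be $C=C_0+\diag(D_1,D_2)$, with block-diagonal part $\diag(D_1,D_2)$ to be chosen. Since $G$ commutes with every block-diagonal matrix, $[B,C]=T$ after rescaling $C_0$ by $\beta^{-1}$. Here $\beta$ is a $\kappa$-dependent constant.

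To bound $\Gamma$, take $Z=\left(\begin{smallmatrix}Z_{11}&Z_{12}\\Z_{21}&Z_{22}\end{smallmatrix}\right)$. The commutator $[G,Z]$ has off-diagonal blocks $\pm2Z_{12},\pm2Z_{21}$, so by Lemma~\ref{lem:block-facts}(iii) the quantity $\|Z_{12}\|_1+\|Z_{21}\|_1$ is controlled by $\|[B^*,Z]\|_1$. Next, the off-diagonal blocks of $[Z,C^*]$, after discarding terms involving $Z_{12},Z_{21}$, contain $Z_{11}Q^*-Q^*Z_{22}$ up to constants and the $D_i$ terms. Hence $Z_{22}$ is close to $Q^{*-1}Z_{11}Q^*$ in trace norm, with constant depending on $\kappa$. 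This is where the choice of $D_1,D_2$ matters; I would have to check that the $D_i$ only enter through off-diagonal blocks that involve $Z_{12},Z_{21}$. The diagonal blocks of $[Z,C^*]$ give $\|[Z_{11},D_1^*]\|_1$ and $\|[Z_{22},D_2^*]\|_1$, again up to small errors.

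We choose $D_1=\epsilon(U+V)$ with the unitaries of Theorem~\ref{thm:s1-poincare} in dimension $\dim\mathcal H$. Since $U$ and $V$ must be tested separately, it is better to use $D_1=\epsilon U$ and $D_2=\epsilon\,Q^{*-1}VQ^*$. Because $Z_{22}\approx Q^{*-1}Z_{11}Q^*$, the commutator with $D_2$ transfers to a commutator of $Z_{11}$ with $V$, at cost $\kappa^2$. Theorem~\ref{thm:s1-poincare} then bounds $d_1(Z_{11})$. We finally combine: $d_1(Z)\le d_1$ of the diagonal part plus $\|Z_{12}\|_1+\|Z_{21}\|_1$, and $Z_{22}-\lambda I$ is controlled via $Q^{*-1}(Z_{11}-\lambda I)Q^*$. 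This yields $\Gamma\le M_\kappa$, while the norms stay bounded by constants depending on $\kappa,\epsilon$. \textbf{The main obstacle} is the bookkeeping that ensures the $D_i$ terms do not spoil the link comparison. It is handled by taking $\epsilon$ small relative to $\kappa$ and making the perturbative estimates sequential.

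\textbf{Part (ii).} We first need a linear surjectivity lemma: by Hahn--Banach duality between $(M^0,\|\cdot\|)$ and trace-class modulo scalars, every traceless $E$ equals $\mathcal L_{B,C}(X,Y)$ with $\|X\|+\|Y\|\le c\,\Gamma(B,C)\|E\|$. Scale so that $t=1$, and set $E=A-T$ with $\|E\|\le\eta_\kappa$. We then run a Newton-type iteration. Solve $\mathcal L(X_1,Y_1)=E$; the new error is $-[X_1,Y_1]$, which is traceless with norm $\lesssim M_\kappa^2\|E\|^2$. Repeat from the same base point $(B,C)$, so that the correction operator stays fixed. This gives a geometrically convergent series of corrections when $\eta_\kappa\lesssim M_\kappa^{-2}$. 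The limit $(B+X,C+Y)$ satisfies $[B+X,C+Y]=A$ with norms at most $2M_\kappa$, which gives $\muc(A)\le 4M_\kappa^2$; after renaming the constant, $\muc(A)\le M_\kappa$.
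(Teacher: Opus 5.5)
Your plan follows the paper's proof closely. The ingredients are the same: $B$ a multiple of $G=\diag(I,-I)$, and $C=C_0+\diag(D_1,D_2)$ with $C_0=GT/2$. The off-diagonal part of a dual witness is controlled by $[G,Z]$, the two diagonal blocks are compared through the link, and Theorem~\ref{thm:s1-poincare} is applied to the upper-left block. Part~(ii) then uses Hahn--Banach plus an iteration from a fixed base point. Part~(ii) is fine as sketched; after the first step the residual also contains the cross terms $[X_k,y_k]+[x_k,Y_k]$, but these are of order $\Gamma^2\|E\|\,\|R_k\|$, so geometric convergence survives. The genuine problem is the choice of $D_2$, which is the one place where the construction has real content.

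You correctly record two facts. Up to terms in $Z_{12},Z_{21}$, the diagonal blocks of $[Z,C^*]$ are $[Z_{11},D_1^*]$ and $[Z_{22},D_2^*]$. The link gives $Z_{22}\approx S^{-1}Z_{11}S$ with $S=Q^*$. But you then apply the twist to $D_2$ rather than to $D_2^*$. With $D_2=\epsilon S^{-1}VS$ one has $D_2^*=\epsilon QV^*Q^{-1}$, and
\[
 \bigl[S^{-1}Z_{11}S,\;QV^*Q^{-1}\bigr]
 =S^{-1}\bigl[Z_{11},\,(Q^*Q)V^*(Q^*Q)^{-1}\bigr]S .
\]
So what you actually control is the commutator of $Z_{11}$ with $(Q^*Q)V^*(Q^*Q)^{-1}$, not with $V$. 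Theorem~\ref{thm:s1-poincare} says nothing about the pair $(U,(Q^*Q)V^*(Q^*Q)^{-1})$ unless $Q^*Q$ commutes with $V$. Nor can you trade $[Z_{22},D_2^*]$ for $[Z_{22},D_2]$ as you can for the unitary $D_1$, because $D_2$ is not normal (consider $Z_{22}=D_2^*$). So the step ``the commutator with $D_2$ transfers to a commutator of $Z_{11}$ with $V$'' fails as written.

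The repair is to twist the adjoint: take $D_2=QW^*Q^{-1}$, so that $D_2^*=S^{-1}WS$. This is exactly the paper's choice $C=C_0+\diag(U^*,V^*)$ with $V=S^{-1}WS$. Then $\|[Z_{22},V]\|_1$ and $\|SZ_{22}-Z_{11}S\|_1$ together bound $\|[Z_{11},W]\|_1$ by $5\kappa\|[Z_0,C^*]\|_1$.

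Finally, the ``main obstacle'' you anticipate is not there, and you need no smallness of $\epsilon$ and no scaling $\beta$. For a block-diagonal witness $Z_0$, the block-diagonal summand of $C^*$ contributes only to the diagonal blocks of $[Z_0,C^*]$. Hence the upper-right corner of $[Z_0,C^*]$ is exactly $\tfrac12(SZ_{22}-Z_{11}S)$. For general $Z=Z_0+R$ one has $\|R\|_1=\tfrac12\|[G,Z]\|_1$ and $\|[Z_0,C^*]\|_1\le\|[Z,C^*]\|_1+2\|C\|\,\|R\|_1$, so $\epsilon=1$ works.
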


Part (i) controls the conditioning of a particular representation of $T$.
More explicitly, Lemma~\ref{lem:absorption} shows that its factors can be
changed by at most $4M_\kappa\|E\|$ to represent $T+E$ whenever
$\Tr E=0$ and $\|E\|\le\cabs M_\kappa^{-2}$.  Scaling this statement gives
the neighborhood estimate in part (ii).

\subsection{Quantum expanders and the trace-class Poincar\'e inequality}\label{sec:poincare}

We first prove Theorem~\ref{thm:s1-poincare}, which supplies the unitaries
used to control approximate common commutants in
Proposition~\ref{prop:graded}.

Hastings's theorem gives two unitaries for which a matrix that nearly
commutes with both must be close to a scalar in Hilbert--Schmidt norm.
To obtain the analogous statement in trace norm, we center a Hermitian
matrix at a median and apply its signed square root.  Its squared
Hilbert--Schmidt norm is the trace norm of the centered matrix, and
Ricard's theorem controls the change in its commutators.  The signed square
root need not be traceless.  The median bounds its positive and negative
spectral supports, ensuring that the scalar part contains at most half of
its squared Hilbert--Schmidt norm.  We can therefore apply the spectral gap
to the remaining nonscalar part.

We begin with the degree-four Hermitian case of Hastings's theorem
\cite{Hastings}.

\begin{theorem}[Hastings \cite{Hastings}]\label{thm:hastings}
Let $U,V$ be independent Haar-distributed unitary matrices in $\Un$, and
define the self-adjoint channel $\Phi_{U,V}:\Mn\to\Mn$ by
\[
 \Phi_{U,V}(Y)=\frac14\bigl(UYU^*+U^*YU+VYV^*+V^*YV\bigr).
\]
For every $\rho>\sqrt3/2$, the probability that the restriction of $\Phi_{U,V}$ to the Hilbert--Schmidt orthogonal complement of the scalars has operator norm at most $\rho$ tends to one as $n\to\infty$.
\end{theorem}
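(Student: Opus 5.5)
This statement is quoted from Hastings \cite{Hastings}, so I would not reprove it. The plan is to translate his theorem into the present normalization and check that the constant is $\sqrt3/2$.

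The first step is to pass to the vector picture. Identify $\Mn$ with $\C^n\otimes\C^n$ by vectorization $Y\mapsto\sum_{i,j}Y_{ij}\,e_i\otimes e_j$. This map is unitary for the Hilbert--Schmidt inner product. Under it, $Y\mapsto UYU^*$ becomes $U\otimes\overline U$, and the scalars $\C I$ become the line spanned by the maximally entangled vector $\Omega=n^{-1/2}\sum_ie_i\otimes e_i$. Hence
\[
 \Phi_{U,V}\;\longleftrightarrow\;\tfrac14\bigl(U\otimes\overline U+U^*\otimes\overline U^{*}+V\otimes\overline V+V^*\otimes\overline V^{*}\bigr).
\]
This is a self-adjoint operator, because each unitary appears together with its adjoint. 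It fixes $\Omega$, and it leaves $\Omega^\perp$ invariant, which is the Hilbert--Schmidt orthogonal complement of the scalars. So the norm of the restriction in the statement equals the largest absolute value of an eigenvalue of this operator on $\Omega^\perp$.

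The second step is to apply Hastings's theorem with $D=2$ independent Haar unitaries, that is, degree $2D=4$. His result says that for the random channel $\frac1{2D}\sum_{k\le D}(U_k\otimes\overline U_k+U_k^*\otimes\overline U_k^{*})$, the largest nontrivial eigenvalue in absolute value is at most
\[
 \frac{2\sqrt{2D-1}}{2D}\,(1+o(1))
\]
with probability tending to one as $n\to\infty$. This is the Ramanujan (Alon--Boppana) value for a $2D$-regular tree. With $D=2$ it equals $2\sqrt3/4=\sqrt3/2$, so every fixed $\rho>\sqrt3/2$ eventually dominates the bound.

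I expect the main obstacle to be bookkeeping rather than mathematics. First, one must confirm that Hastings's normalization agrees with the one above: he counts $U_k$ and $U_k^*$ separately, and his operator acts on $\C^n\otimes\C^n$. Second, one must confirm that his conclusion is really a probability statement. If he instead bounds $\mathbb E\Tr$ of a high power $p=p(n)\to\infty$ of the restricted operator by $n^2\bigl(\tfrac{\sqrt3}{2}\bigr)^{p}e^{o(p)}$, I would apply Markov's inequality to $\lambda_{\max}^{p}\le\Tr(\cdot)^{p}$. This gives $\Pr(\lambda>\rho)\le n^2(\sqrt3/(2\rho))^{p}e^{o(p)}\to0$, provided $p\gg\log n$. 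If that moment control is unavailable in the needed range, a fallback is the strong convergence results of Bordenave--Collins for random unitaries, which give the same limiting constant $\sqrt3/2$ almost surely. For the rest of the paper, only the existence of one good pair $(U,V)$ in each large dimension is used.
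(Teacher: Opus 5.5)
Your proposal is correct and matches the paper, which also takes this statement directly from Hastings without proof; your vectorization $UYU^*\leftrightarrow U\otimes\overline U$, with $\C I$ corresponding to the line through $\Omega$, and your check $2\sqrt{2D-1}/(2D)=\sqrt3/2$ at $D=2$ are the right normalization checks. One small point for your moment fallback: take $p$ even, so that $\Tr(T^p)\ge\|T\|^p$ holds for the self-adjoint restriction $T$ of the channel to $\Omega^\perp$.
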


For later use, we record the deterministic Poincar\'e inequality furnished by
Hastings's theorem.

\begin{corollary}\label{cor:expander}
There is an absolute constant $\Kexp$ such that, for every $n$, one can choose unitary matrices $U,V\in\Un$ satisfying
\[
 \norm{Y-\tau(Y)I}_2^2
 \le \Kexp\bigl(\norm{\comm{Y}{U}}_2^2+\norm{\comm{Y}{V}}_2^2\bigr)
\]
for every $Y\in \Mn$.
\end{corollary}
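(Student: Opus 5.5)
We derive Corollary~\ref{cor:expander} from Theorem~\ref{thm:hastings} by relating the spectral gap of $\Phi_{U,V}$ to the commutators. For large $n$ we fix $\rho\in(\sqrt3/2,1)$, say $\rho=0.9$, and use Theorem~\ref{thm:hastings} to get some $n_0$ such that for every $n\ge n_0$ at least one pair $(U,V)$ satisfies $\|\Phi_{U,V}|_{(\mathbb CI)^\perp}\|\le\rho$. We use the existence of such a deterministic pair and do not need the full probability statement. For the finitely many $n<n_0$ we argue by compactness, and $n=1$ is trivial.

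The key identity is the following. For any unitary $W$ and $Y\in\Mn$,
\[
 \|[Y,W]\|_2^2=\|YW-WY\|_2^2=\|W^*YW-Y\|_2^2=2\|Y\|_2^2-2\operatorname{Re}\Tr(Y^*W^*YW).
\]
Also $\operatorname{Re}\Tr(Y^*W^*YW)=\operatorname{Re}\Tr(Y^*WYW^*)$, by taking complex conjugates and using cyclicity: $\overline{\Tr(Y^*W^*YW)}=\Tr(W^*Y^*WY)=\Tr(Y^*WYW^*)$. Summing over $W\in\{U,V\}$ gives
\[
 \|[Y,U]\|_2^2+\|[Y,V]\|_2^2
 =4\bigl(\|Y\|_2^2-\operatorname{Re}\langle \Phi_{U,V}(Y),Y\rangle_{\mathrm{HS}}\bigr),
\]
where the Hilbert--Schmidt pairing is $\langle X,Y\rangle=\Tr(Y^*X)$.

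Both sides are unchanged if $Y$ is replaced by $Y_0=Y-\tau(Y)I$, since scalars commute with $U,V$. The matrix $Y_0$ is orthogonal to $I$, and $\Phi_{U,V}$ is unital and self-adjoint, so $\Phi_{U,V}$ preserves $(\mathbb CI)^\perp$. The spectral bound therefore gives
\[
 \operatorname{Re}\langle\Phi(Y_0),Y_0\rangle\le\rho\|Y_0\|_2^2.
\]
Hence
\[
 \|[Y,U]\|_2^2+\|[Y,V]\|_2^2\ge4(1-\rho)\|Y_0\|_2^2,
\]
which is the claim with $\Kexp=1/(4(1-\rho))$ for $n\ge n_0$.

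For $2\le n<n_0$ we cannot rely on Hastings, and this is the only slightly delicate step. We take $U=\diag(1,\omega,\dots,\omega^{n-1})$ with $\omega=e^{2\pi i/n}$, and $V$ the cyclic shift. The common commutant of $U$ and $V$ is $\mathbb CI$, because $U$ has distinct eigenvalues, so anything commuting with $U$ is diagonal, and a diagonal matrix commuting with the shift is scalar. The quadratic form $Y\mapsto\|[Y,U]\|_2^2+\|[Y,V]\|_2^2$ is therefore positive definite on the finite-dimensional space $(\mathbb CI)^\perp$, so it is bounded below there by $c_n\|Y_0\|_2^2$ for some $c_n>0$. Taking $\Kexp=\max\bigl(1/(4(1-\rho)),\max_{n<n_0}c_n^{-1}\bigr)$ gives an absolute constant, since $n_0$ is fixed once $\rho$ is chosen.
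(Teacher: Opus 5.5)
Your proof is correct and follows essentially the same route as the paper. You use the identity $\|[Y_0,U]\|_2^2+\|[Y_0,V]\|_2^2=4\bigl(\|Y_0\|_2^2-\operatorname{Re}\langle\Phi_{U,V}(Y_0),Y_0\rangle_2\bigr)$ together with Hastings's gap to get $\Kexp=(4(1-\rho))^{-1}$ for large $n$. For the finitely many small dimensions you use an irreducible pair (a diagonal unitary with distinct eigenvalues and the cyclic shift) plus compactness. The only cosmetic difference is that you bound $\operatorname{Re}\langle\Phi_{U,V}(Y_0),Y_0\rangle_2$ by Cauchy--Schwarz, whereas the paper invokes self-adjointness of the channel.
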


\begin{proof}
Fix $\rho$ with $\sqrt3/2<\rho<1$.  Theorem~\ref{thm:hastings} shows that, for every sufficiently large $n$, there are $U,V\in\Un$ for which
\[
 \|\Phi_{U,V}|_{\{I\}^{\perp}}\|_{S_2\to S_2}\le\rho.
\]
Given $Y\in \Mn$, put $Y_0=Y-\tau(Y)I$.  Commutators do not see scalar matrices, so it is enough to estimate $Y_0$.  For a unitary matrix $U$,
\[
 \begin{aligned}
 \|[Y_0,U]\|_2^2
 &=\|Y_0-UY_0U^*\|_2^2\\
 &=2\|Y_0\|_2^2-2\operatorname{Re}\langle Y_0,UY_0U^*\rangle_2.
 \end{aligned}
\]
Adding the analogous identity for $V$ and using the definition of the self-adjoint channel $\Phi_{U,V}$ gives
\[
 \|[Y_0,U]\|_2^2+\|[Y_0,V]\|_2^2
 =4\langle Y_0,(I-\Phi_{U,V})Y_0\rangle_2.
\]
Because $Y_0\perp I$ and $\Phi_{U,V}$ is self-adjoint on Hilbert--Schmidt space,
\[
 \langle Y_0,(I-\Phi_{U,V})Y_0\rangle_2
 \ge(1-\rho)\|Y_0\|_2^2.
\]
Thus the desired inequality holds in all sufficiently large dimensions with
$\Kexp=(4(1-\rho))^{-1}$.

For each of the finitely many remaining dimensions $n\ge2$, choose an irreducible pair of unitary matrices: for instance, the cyclic shift and a diagonal unitary with pairwise distinct diagonal entries.  Their common commutant is $\mathbb CI$.  On the compact unit sphere $\mathcal S_n=\{Y:\tau(Y)=0,\ \|Y\|_2=1\}$, the continuous function $F_n(Y)=\|[Y,U]\|_2^2+\|[Y,V]\|_2^2$ is strictly positive, because $F_n(Y)=0$ would put $Y$ in the common commutant and hence force $Y=0$.  Therefore $c_n=\min_{\mathcal S_n}F_n>0$.  Taking the maximum of $c_n^{-1}$ over the finitely many exceptional dimensions and the large-dimensional constant above yields one absolute $\Kexp$.  The case $n=1$ is vacuous.
\end{proof}

\begin{remark}\label{rem:expander-sources}
For further background on quantum expanders, see Pisier \cite{Pisier}
and the explicit constructions of Ben-Aroya, Schwartz, and Ta-Shma
\cite{BST}.  The argument here uses the particular two-unitary spectral
gap supplied by Hastings.
\end{remark}

The Hilbert--Schmidt estimate must next be transferred to trace class.  Ricard's
H\"older continuity theorem for the noncommutative Mazur map provides exactly
that step.

\begin{theorem}[Ricard \cite{Ricard}]\label{thm:ricard}
For $1\le p,q<\infty$, the noncommutative Mazur map
\[
 M_{p,q}(x)=u|x|^{p/q},
 \qquad x=u|x|
\]
is $\min\{p/q,1\}$-H\"older on the unit ball of every noncommutative $L_p$-space, with a constant depending only on $p,q$.  In particular, there is an absolute $\CRic$ such that, whenever $X,Y\in \Mn$ satisfy $\norm{X}_1=\norm{Y}_1$, one has
\[
 \norm{M_{1,2}(X)-M_{1,2}(Y)}_2^2
 \le \CRic\norm{X-Y}_1.
\]
\end{theorem}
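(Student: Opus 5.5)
The general Hölder assertion is Ricard's theorem \cite{Ricard}, and I would quote it as stated. The only work is to derive the ``in particular'' clause. The plan is to specialize to $p=1$, $q=2$ on the noncommutative $L_1$-space $(\Mn,\Tr)$, that is, trace class with the unnormalized trace used throughout. Then use homogeneity of the Mazur map to remove the unit-ball restriction. This is why the hypothesis $\|X\|_1=\|Y\|_1$ appears.

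First, I would record the specialization. With $p=1$ and $q=2$ the Hölder exponent is $\min\{1/2,1\}=1/2$. Writing $x=u|x|$ for the polar decomposition, $M_{1,2}(x)=u|x|^{1/2}$. Ricard's theorem then gives an absolute constant $C_0$ such that
\[
 \|M_{1,2}(x)-M_{1,2}(y)\|_2\le C_0\|x-y\|_1^{1/2}
 \qquad\text{whenever }\|x\|_1,\|y\|_1\le1 .
\]
Here $\Mn$ with the trace is a finite von Neumann algebra, $L_1$ is the Schatten class $S_1$, and $L_2$ is $S_2$. The constant therefore does not depend on $n$. Squaring gives the inequality with $\CRic=C_0^2$ on the unit ball.

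Second, I would use homogeneity. For $s\ge0$ we have $sx=u\,(s|x|)$, so $M_{1,2}(sx)=s^{1/2}M_{1,2}(x)$. Now take $X,Y$ with $\|X\|_1=\|Y\|_1=s$. If $s=0$, both sides vanish. Otherwise put $x=X/s$ and $y=Y/s$, which lie in the unit ball. Then
\[
 \|M_{1,2}(X)-M_{1,2}(Y)\|_2^2
 = s\,\|M_{1,2}(x)-M_{1,2}(y)\|_2^2
 \le sC_0^2\|x-y\|_1 = C_0^2\|X-Y\|_1 .
\]
This is the claimed bound.

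No step is a real obstacle. The only point to check is that the polar decomposition ambiguity does not matter: the partial isometry $u$ is only determined on the support of $|x|$. But $u|x|^{1/2}$ is independent of the choice, because $|x|^{1/2}$ vanishes off that support. Hence $M_{1,2}$ is well defined and the scaling identity holds exactly.
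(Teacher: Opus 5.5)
Your proposal is correct and follows the paper's derivation essentially verbatim: you specialize Ricard's theorem to $(p,q)=(1,2)$ on the trace-norm unit ball, rescale by $a=\|X\|_1=\|Y\|_1$, and use the degree-$1/2$ homogeneity $M_{1,2}(sX)=s^{1/2}M_{1,2}(X)$ before squaring. Your extra remark that $M_{1,2}$ does not depend on the choice of partial isometry in the polar decomposition is a harmless addition that the paper leaves implicit.
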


\begin{proof}[Derivation of the displayed specialization]
Let $a=\|X\|_1=\|Y\|_1$.  If $a=0$, then $X=Y=0$.  Assume $a>0$.  The matrices $a^{-1}X$ and $a^{-1}Y$ lie in the trace-norm unit ball, so Ricard's theorem with $(p,q)=(1,2)$ gives
\[
 \|M_{1,2}(a^{-1}X)-M_{1,2}(a^{-1}Y)\|_2
 \le C\|a^{-1}(X-Y)\|_1^{1/2}.
\]
The Mazur map is homogeneous of degree $1/2$:
$M_{1,2}(a^{-1}X)=a^{-1/2}M_{1,2}(X)$.  Multiplying by $a^{1/2}$ and then squaring gives
\[
 \|M_{1,2}(X)-M_{1,2}(Y)\|_2^2
 \le C^2\|X-Y\|_1.
\]
Renaming $C^2$ as $\CRic$ proves the stated form, with a constant independent of the matrix dimension.
\end{proof}

The elementary fact that a median minimizes the sum of absolute deviations
identifies a useful scalar to subtract from a Hermitian matrix.  Besides
minimizing trace norm, this choice bounds both spectral support dimensions;
that bound will control the scalar part after taking a signed square root.

\begin{lemma}\label{lem:median}
Let $Z\in\Mn$ be Hermitian.  A real number $\lambda$ satisfies
$\norm{Z-\lambda I}_1=d_1(Z)$ if and only if it is a median of the
eigenvalues of $Z$.  For such a median, the positive and negative spectral
supports of $Z-\lambda I$ each have rank at most $n/2$.
\end{lemma}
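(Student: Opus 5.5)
Since $Z$ is Hermitian, diagonalize it as $Z=W\diag(z_1,\ldots,z_n)W^*$ with $z_1\le\cdots\le z_n$. Unitary invariance of the trace norm then gives $\|Z-\lambda I\|_1=\sum_i|z_i-\lambda|$ for real $\lambda$. For complex $\lambda=a+ib$, the eigenvalues of $Z-\lambda I$ are $z_i-\lambda$ (the matrix is normal), so
\[
 \|Z-\lambda I\|_1=\sum_i|z_i-a-ib|\ge\sum_i|z_i-a|.
\]
Equality holds only when $b=0$ or $n=0$. Hence $d_1(Z)$ is the minimum over real $\lambda$ of the scalar function $\phi(\lambda)=\sum_i|z_i-\lambda|$, and any complex minimizer is real. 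Restricting the ``if and only if'' to real $\lambda$ is therefore harmless.

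Next I will identify the minimizers of the convex piecewise-linear function $\phi$. Its one-sided derivatives at $\lambda$ are
\[
 \phi'_\pm(\lambda)=\#\{i:z_i<\lambda\}-\#\{i:z_i>\lambda\}\pm\#\{i:z_i=\lambda\},
\]
with the obvious sign conventions. Convexity means $\lambda$ is a minimizer exactly when $\phi'_-(\lambda)\le0\le\phi'_+(\lambda)$. Unwinding this gives $\#\{i:z_i<\lambda\}\le n/2$ and $\#\{i:z_i>\lambda\}\le n/2$, and this pair of inequalities is precisely the definition of a median of the multiset $\{z_i\}$. When $n$ is even, the set of medians is the whole interval $[z_{n/2},z_{n/2+1}]$, consistent with $\phi$ being constant there. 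So the equivalence is just the elementary one-dimensional fact, once the reduction to real $\lambda$ is made.

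For the support bound, the positive spectral support of $Z-\lambda I$ is the span of the eigenvectors $We_i$ with $z_i>\lambda$. Its rank is therefore $\#\{i:z_i>\lambda\}$, which is at most $n/2$ by the median characterization. The negative support is handled symmetrically.

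The only point needing care is the complex $\lambda$ reduction, since $d_1$ is defined as an infimum over $\mathbb C$. The inequality $|x-a-ib|\ge|x-a|$ for real $x$ handles it, so I expect no real obstacle.
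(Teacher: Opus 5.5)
Your proposal is correct and follows the paper's proof essentially step for step: you diagonalize, reduce complex scalars to real ones via $\sqrt{(z_i-a)^2+b^2}\ge|z_i-a|$, read off the median condition from the one-sided derivatives of the convex piecewise-linear function $\sum_i|z_i-\lambda|$, and count eigenvalues strictly above and below $\lambda$ for the support bound. Your explicit derivative formulas and your remark that complex minimizers must be real add a little precision but no new idea.
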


\begin{proof}
Diagonalize $Z$ and write its eigenvalues in nondecreasing order as
$\lambda_1\le\cdots\le\lambda_n$.  For a real scalar $t$,
\[
 \|Z-tI\|_1=\sum_{j=1}^n|\lambda_j-t|.
\]
The one-sided derivatives of this convex, piecewise-linear function show that it is minimized precisely when at least half of the eigenvalues lie on each side of $t$, which is the median condition.  A complex scalar cannot improve the minimum: if $\lambda=a+ib$, then
\[
 \|Z-\lambda I\|_1
 =\sum_j\sqrt{(\lambda_j-a)^2+b^2}
 \ge\sum_j|\lambda_j-a|.
\]
For a median $t$, the positive and negative spectral subspaces of $Z-tI$ each contain at most $n/2$ eigenvectors, with zero eigenvectors belonging to neither support.
\end{proof}

The expander estimate, Ricard's theorem, and the median reduction now combine
to prove the trace-class Poincar\'e inequality.

\begin{proof}[Proof of Theorem~\ref{thm:s1-poincare}]
Choose $U,V$ as in Corollary~\ref{cor:expander}.  We first prove the estimate for a self-adjoint $Z$.

\emph{Step 1: center $Z$ at a median.}
Let $\lambda$ be a median of the eigenvalues and put $X=Z-\lambda I$.  Lemma~\ref{lem:median} gives $\|X\|_1=d_1(Z)$, and the positive and negative spectral supports of $X$ both have rank at most $n/2$.  Define $M=M_{1,2}(X)=\operatorname{sgn}(X)|X|^{1/2}$. Then $M=M^*$, it has the same positive and negative supports as $X$, and $\|M\|_2^2=\Tr|X|=\|X\|_1=d_1(Z)$. 
\emph{Step 2: the scalar part of $M$ cannot contain most of its mass.}
Since $M$ is self-adjoint,
\[
 \|M-\tau(M)I\|_2^2
 =\|M\|_2^2-\frac1n|\Tr M|^2.
\]
Suppose first that $\Tr M\ge0$.  Then $|\Tr M|\le\Tr M_+$.  The support of $M_+$ has rank at most $n/2$, so Cauchy--Schwarz on that support gives
\[
 |\Tr M|^2
 \le(\rank\operatorname{supp}M_+)\,\Tr(M_+^2)
 \le\frac n2\|M\|_2^2.
\]
If $\Tr M<0$, the identical argument with $M_-$ gives the same conclusion.  Hence
\[
 \|M-\tau(M)I\|_2^2\ge\frac12\|M\|_2^2
 =\frac12d_1(Z).
\]

\emph{Step 3: transfer the expander gradient through the Mazur map.}
The Mazur map is equivariant under unitary conjugation and trace norm is unitarily invariant.  Therefore $X$ and $UXU^*$ have equal trace norm, and Theorem~\ref{thm:ricard} gives
\[
 \begin{aligned}
 \|[M,U]\|_2^2
 &=\|M-UMU^*\|_2^2\\
 &=\|M_{1,2}(X)-M_{1,2}(UXU^*)\|_2^2\\
 &\le \CRic\|X-UXU^*\|_1
 =\CRic\|[X,U]\|_1.
 \end{aligned}
\]
The same estimate holds with $V$ in place of $U$.  Applying Corollary~\ref{cor:expander} to $M$ and using $[X,U]=[Z,U]$ yields
\[
 \frac12d_1(Z)
 \le\Kexp \CRic\bigl(\|[Z,U]\|_1+\|[Z,V]\|_1\bigr).
\]
This proves the self-adjoint case.

\emph{Step 4: reduce a general matrix to two self-adjoint matrices.}
Write $Z=A+iB$ with $A=A^*$ and $B=B^*$.  If $a,b\in\mathbb R$ nearly minimize $d_1(A)$ and $d_1(B)$, then $\|Z-(a+ib)I\|_1 \le\|A-aI\|_1+\|B-bI\|_1$, so $d_1(Z)\le d_1(A)+d_1(B)$.  For any unitary $W$, $[A,W]=\tfrac12\bigl([Z,W]+[Z^*,W]\bigr)$. Moreover, $[Z^*,W]^*=W^*Z-ZW^*=W^*[Z,W]W^*$, so $\|[Z^*,W]\|_1=\|[Z,W]\|_1$.  Hence
$\|[A,W]\|_1\le\|[Z,W]\|_1$.  The same conclusion holds for
$B=(Z-Z^*)/(2i)$.  Applying the self-adjoint estimate to $A$ and $B$ gives
\[
 d_1(Z)
 \le4\Kexp \CRic\bigl(\|[Z,U]\|_1+\|[Z,V]\|_1\bigr).
\]
Thus one may take $\Kpoin=4\Kexp \CRic$.
\end{proof}

\subsection{Linearization and nonlinear absorption}\label{sec:linearization}

The trace-class inequality will control $\Gamma(B,C)$ for the factors
constructed in Section~\ref{sec:graded}.  Before choosing those factors,
we show why a bound for $\Gamma$ gives stability under perturbation.
Its definition bounds the adjoint of the linearized commutator map from
below.  The standard Hahn--Banach argument below turns that bound into a
solution estimate for the original map.  Successive corrections using this
fixed linearization then solve the nonlinear equation.  Equip $M_n^0$ with
the operator norm, and identify the dual of $\Mn$ with trace class through
$\langle E,Z\rangle=\Tr(Z^*E)$.

\begin{lemma}\label{lem:linear-surj}
Let $B,C\in \Mn$ with $\Gamma(B,C)<\infty$.  Then every $E\in M_n^0$ admits $X,Y\in \Mn$ with
\[
 \mathcal L_{B,C}(X,Y)=[X,C]+[B,Y]=E,
 \qquad
 \max\{\|X\|,\|Y\|\}\le\Gamma(B,C)\|E\|.
\]
\end{lemma}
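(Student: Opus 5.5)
The plan is the standard closed-range/Hahn--Banach duality argument: compute the Banach adjoint of $\mathcal L_{B,C}$, bound it from below using the definition of $\Gamma$, and convert that lower bound into a bounded right inverse. Equip the domain $\Mn\times\Mn$ with the norm $\max\{\|X\|,\|Y\|\}$; its dual is $S_1\times S_1$ with norm $\|Z_1\|_1+\|Z_2\|_1$, via the pairing $\Tr(Z^*\cdot)$ in each coordinate. The target $M_n^0$ carries the operator norm. Its dual is the quotient $S_1/\mathbb CI$, because $\mathbb CI$ is the annihilator of $M_n^0$ under this pairing. The quotient norm of the class of $Z$ is then exactly $d_1(Z)$.

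\emph{Step 1: compute the adjoint.} For $Z\in\Mn$, cyclicity of the trace gives
\[
 \Tr(Z^*[X,C])=\Tr\bigl((CZ^*-Z^*C)X\bigr)=\Tr\bigl(([Z,C^*])^*X\bigr),
\]
and similarly $\Tr(Z^*[B,Y])=\Tr\bigl(([B^*,Z])^*Y\bigr)$ up to the sign/orientation convention, which I will check carefully. So
\[
 \mathcal L_{B,C}^*(Z)=\bigl([Z,C^*],[B^*,Z]\bigr),
\]
which is well defined on $\Mn/\mathbb CI$ since commutators kill scalars. The definition of $\Gamma$ then reads exactly
\[
 \|\mathcal L^*_{B,C}Z\|\ge\Gamma(B,C)^{-1}d_1(Z).
\]

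\emph{Step 2: duality.} In finite dimensions, injectivity of $\mathcal L^*$ on the quotient implies surjectivity of $\mathcal L$ onto $M_n^0$ (the case $n=1$ is trivial). For the norm bound I will argue directly by Hahn--Banach. Let $K=\{\mathcal L(X,Y):\max\{\|X\|,\|Y\|\}\le\Gamma\}$, a compact convex balanced set. Suppose some $E$ with $\|E\|=1$ lies outside $K$. Separation gives a functional $\varphi$ on $M_n^0$ with $\operatorname{Re}\varphi(E)>\sup_K\operatorname{Re}\varphi$. Extend $\varphi$ to $\Mn$ and represent it by some $Z$, choosing the representative so that $\|\varphi\|=d_1(Z)$. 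Then
\[
 \sup_K\operatorname{Re}\varphi=\Gamma\bigl(\|[Z,C^*]\|_1+\|[B^*,Z]\|_1\bigr)\ge d_1(Z)\ge|\varphi(E)|,
\]
which is a contradiction. Here the sup over a product of operator-norm balls of a pairing with trace-class elements equals the sum of the trace norms, since the operator norm and trace norm are dual.

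\emph{Main obstacle.} There is no real difficulty. The only delicate points are bookkeeping: getting the adjoint signs right, and identifying the dual of $M_n^0$ (the extension to $\Mn$ modulo scalars) with the quotient norm $d_1$.
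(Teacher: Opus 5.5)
Your proposal is correct and follows essentially the paper's argument. It uses the same identification of $(M_n^0)^*$ with $S_1/\mathbb C I$ under the quotient norm $d_1$, the same adjoint $\mathcal L_{B,C}^*[Z]=([Z,C^*],[B^*,Z])$ (your sign check does come out exactly this way), and the same lower bound read off from the definition of $\Gamma(B,C)$. The only difference is cosmetic: you finish with the separation form of Hahn--Banach applied to the compact image of the $\Gamma$-ball, whereas the paper defines $F(\mathcal L_{B,C}^*[Z])=\Tr(Z^*E)$ on the range of the adjoint, extends it norm-preservingly, and recovers $(X,Y)$ because the spaces are finite dimensional.
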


\begin{proof}
For $n=1$, take $X=Y=0$.  We may therefore assume $n\ge2$.
Let
\[
 \mathcal X=\Mn\oplus_\infty \Mn,
 \qquad
 \|(X,Y)\|_{\mathcal X}=\max\{\|X\|,\|Y\|\},
 \qquad
 \mathcal Y=M_n^0
\]
with the operator norm on $\mathcal Y$.  Under the trace pairing
$\langle E,Z\rangle=\Tr(Z^*E)$, the annihilator of $\mathcal Y$ inside $S_1^n$ is exactly $\mathbb CI$.  Therefore $\mathcal Y^*=S_1^n/\mathbb CI$ isometrically, and the quotient norm of the class $[Z]$ is
\[
 \|[Z]\|_{\mathcal Y^*}
 =\inf_{\lambda\in\mathbb C}\|Z-\lambda I\|_1=d_1(Z).
\]
The dual of $\mathcal X$ is $S_1^n\oplus_1S_1^n$, with norm
$\|(R,S)\|=\|R\|_1+\|S\|_1$.

A cyclic trace calculation identifies the adjoint.  Indeed,
\[
 \begin{aligned}
 \Tr\bigl(Z^*[X,C]\bigr)
 &=\Tr\bigl((CZ^*-Z^*C)X\bigr)
 =\Tr\bigl([Z,C^*]^*X\bigr),\\
 \Tr\bigl(Z^*[B,Y]\bigr)
 &=\Tr\bigl((Z^*B-BZ^*)Y\bigr)
 =\Tr\bigl([B^*,Z]^*Y\bigr).
 \end{aligned}
\]
Thus
\[
 \mathcal L_{B,C}^*[Z]=([Z,C^*],[B^*,Z]).
\]
By the definition of $\Gamma(B,C)$,
\[
 \|\mathcal L_{B,C}^*[Z]\|_{\mathcal X^*}
 \ge\Gamma(B,C)^{-1}\|[Z]\|_{\mathcal Y^*}.
\]
In particular, $\mathcal L_{B,C}^*$ is injective.

Fix $E\in\mathcal Y$.  On the range of $\mathcal L_{B,C}^*$ define $F(\mathcal L_{B,C}^*[Z])=\Tr(Z^*E)$. This is well defined because the adjoint is injective.  Since $\Tr E=0$, scalar translation of $Z$ does not change the right-hand side, and therefore
\[
 \begin{aligned}
 |F(\mathcal L_{B,C}^*[Z])|
 &\le\|E\|\inf_{\lambda\in\mathbb C}\|Z-\lambda I\|_1\\
 &\le\Gamma(B,C)\|E\|\,
       \|\mathcal L_{B,C}^*[Z]\|_{\mathcal X^*}.
 \end{aligned}
\]
Hahn--Banach extends $F$ to all of $\mathcal X^*$ with the same norm.  Because the spaces are finite dimensional, the extension is evaluation at an element $(X,Y)\in\mathcal X$, and
\[
 \max\{\|X\|,\|Y\|\}\le\Gamma(B,C)\|E\|.
\]
For every $[Z]\in\mathcal Y^*$,
\[
 \langle\mathcal L_{B,C}(X,Y),[Z]\rangle
 =\langle(X,Y),\mathcal L_{B,C}^*[Z]\rangle
 =F(\mathcal L_{B,C}^*[Z])
 =\langle E,[Z]\rangle.
\]
The dual separates points of $\mathcal Y$, so $\mathcal L_{B,C}(X,Y)=E$.
\end{proof}

The preceding lemma controls each linear correction.  The remaining
error is a commutator of the corrections, so its size is quadratic.
If $\Gamma^2\|E\|$ is sufficiently small, the resulting residuals decrease
geometrically.  The next lemma makes this familiar perturbation argument
quantitative without requiring a linear choice of the solutions.

\begin{lemma}\label{lem:absorption}
There is an absolute constant $\cabs>0$ with the following property.
Let $B,C,E\in\Mn$ and $\Gamma>0$ satisfy $\Tr E=0$ and
$\Gamma(B,C)\le\Gamma$.  If
\[
 \norm{E}\le \cabs\Gamma^{-2},
\]
then there are $X,Y\in\Mn$ satisfying
\[
 \comm{B+X}{C+Y}=\comm{B}{C}+E
\]
and
\[
 \max\{\norm{X},\norm{Y}\}\le4\Gamma\norm{E}.
\]
\end{lemma}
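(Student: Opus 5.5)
We run a Newton-type iteration with the linearization $\mathcal L_{B,C}$ frozen at the base point, using Lemma~\ref{lem:linear-surj} at each step. Put $X_0=Y_0=0$ and $E_0=E$. Given the $k$th residual $E_k$ with $\Tr E_k=0$, Lemma~\ref{lem:linear-surj} gives $(x_k,y_k)$ with
\[
 [x_k,C]+[B,y_k]=E_k,\qquad \max\{\|x_k\|,\|y_k\|\}\le\Gamma\|E_k\|.
\]
We set $X_{k+1}=X_k+x_k$ and $Y_{k+1}=Y_k+y_k$. The new residual is
\[
 E_{k+1}=[B,C]+E-[B+X_{k+1},C+Y_{k+1}].
\]
Expanding with the identity $[B+X,C+Y]-[B,C]=[X,C]+[B,Y]+[X,Y]$, the linear terms telescope, since $\sum_{j\le k}\mathcal L(x_j,y_j)=\sum_{j\le k}E_j$. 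So it is cleaner to track
\[
 R_k=E-\mathcal L(X_k,Y_k)-[X_k,Y_k],
\]
which is exactly the remaining error. Each $R_k$ is traceless because commutators are traceless. We choose $(x_k,y_k)$ to solve $\mathcal L(x_k,y_k)=R_k$. This gives
\[
 R_{k+1}=-[X_{k+1},Y_{k+1}]+[X_k,Y_k]=-[x_k,Y_k]-[X_k,y_k]-[x_k,y_k].
\]

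For the estimates, let $\varepsilon=\|E\|$ and make the inductive hypotheses
\[
 \|R_k\|\le 2^{-k}\varepsilon,\qquad \max\{\|X_k\|,\|Y_k\|\}\le 2\Gamma\varepsilon(1-2^{-k}).
\]
Then $\|x_k\|,\|y_k\|\le\Gamma 2^{-k}\varepsilon$, and
\[
 \|R_{k+1}\|\le 2\bigl(2\cdot\Gamma 2^{-k}\varepsilon\cdot 2\Gamma\varepsilon+(\Gamma2^{-k}\varepsilon)^2\bigr)\le 2^{-k}\varepsilon\cdot 10\Gamma^2\varepsilon.
\]
This is at most $2^{-k-1}\varepsilon$ provided $\Gamma^2\varepsilon\le 1/20$. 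So $\cabs=1/20$ works.

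The partial sums then converge geometrically, with $\max\{\|X\|,\|Y\|\}\le 2\Gamma\varepsilon\le4\Gamma\|E\|$. Since $R_k\to0$, continuity gives $[B+X,C+Y]=[B,C]+E$. In finite dimensions convergence is immediate from the summable bounds.

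The only delicate point is the bookkeeping of the residual. The linearization must stay at $(B,C)$ and not be updated, because we have no control of $\Gamma$ at the perturbed factors. The cross terms $[x_k,Y_k]$ then appear alongside the genuinely quadratic term, so the contraction constant must absorb the accumulated size of $Y_k$. This is why the smallness condition takes the form $\Gamma^2\|E\|$. Solutions need not depend linearly on the data, and this costs nothing since each step is an independent application of Lemma~\ref{lem:linear-surj}.
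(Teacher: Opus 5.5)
Your proposal is correct and follows the paper's proof essentially verbatim: the linearization stays frozen at $(B,C)$, the residual is $R_k=E-\mathcal L_{B,C}(X_k,Y_k)-[X_k,Y_k]$ with $R_{k+1}=-[X_k,y_k]-[x_k,Y_k]-[x_k,y_k]$, and the same geometric-decay induction closes the argument. Your slightly sharper inductive bound $\max\{\|X_k\|,\|Y_k\|\}\le2\Gamma\varepsilon(1-2^{-k})$ gives $c_{\mathrm{abs}}=1/20$ and the final bound $2\Gamma\|E\|$, whereas the paper uses $1/36$ and $4\Gamma\|E\|$.
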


\begin{proof}
Put $e=\|E\|$.  The case $e=0$ is trivial.  We construct corrections iteratively.  Set $X_0=Y_0=0$, $R_0=E$. At stage $k$, suppose $R_k=E-\mathcal L_{B,C}(X_k,Y_k)-[X_k,Y_k]$. The residual is traceless, because every other term in this identity is.  By Lemma~\ref{lem:linear-surj}, choose $x_k,y_k$ with
\[
 \mathcal L_{B,C}(x_k,y_k)=R_k,
 \qquad
 q_k:=\max\{\|x_k\|,\|y_k\|\}
 \le\Gamma\|R_k\|.
\]  Put
$X_{k+1}=X_k+x_k$ and $Y_{k+1}=Y_k+y_k$.  Expanding the quadratic term gives
\[
 R_{k+1}
 =-[X_k,y_k]-[x_k,Y_k]-[x_k,y_k].
\]

We prove simultaneously that
\[
 \|R_k\|\le2^{-k}e
 \quad\text{and}\quad
 \max\{\|X_k\|,\|Y_k\|\}\le4\Gamma e
\]
for every $k$.  The assertion is clear at $k=0$.  Assuming it at stage $k$, the commutator inequality
$\|[R,S]\|\le2\|R\|\,\|S\|$ yields
\[
 \begin{aligned}
 \|R_{k+1}\|
 &\le2\|X_k\|\,\|y_k\|+2\|x_k\|\,\|Y_k\|
       +2\|x_k\|\,\|y_k\|\\
 &\le16\Gamma e q_k+2q_k^2\\
 &\le16\Gamma^2e\|R_k\|+2\Gamma^2\|R_k\|^2\\
 &\le18\Gamma^2e\|R_k\|.
 \end{aligned}
\]
Choose $\cabs\le1/36$.  The hypothesis $e\le \cabs\Gamma^{-2}$ then gives
$\|R_{k+1}\|\le\frac12\|R_k\|\le2^{-(k+1)}e$.  Moreover,
\[
 \sum_{j=0}^kq_j
 \le\Gamma e\sum_{j=0}^k2^{-j}
 <2\Gamma e,
\]
which proves the required bound for $X_{k+1}$ and $Y_{k+1}$.

The series $\sum x_k$ and $\sum y_k$ converge in operator norm to matrices $X$ and $Y$.  Their norms satisfy $\max\{\|X\|,\|Y\|\}\le4\Gamma e$. Since $R_k\to0$, continuity of the commutator and of $\mathcal L_{B,C}$ in the identity defining $R_k$ gives
\[
 E=\mathcal L_{B,C}(X,Y)+[X,Y].
\]
Equivalently, $[B+X,C+Y]=[B,C]+E$.
\end{proof}

\subsection{Graded matrices with an invertible link}\label{sec:graded}

We now combine the trace-class inequality from Section~\ref{sec:poincare}
with the correction argument from Section~\ref{sec:linearization} to prove
Proposition~\ref{prop:graded}.  We must choose factors for which
$\Gamma(B,C)$ is uniformly bounded.  The grading operator controls off-diagonal
blocks in the dual estimate.  The invertible link makes the two diagonal
blocks close after a suitable similarity, leaving one matrix to control.
The expander estimate forces this remaining matrix to be close to a scalar.
The corresponding block-diagonal addition to the second factor leaves the
commutator unchanged because it commutes with the grading operator.  Once
this construction bounds $\Gamma$, Lemma~\ref{lem:absorption} gives the
neighborhood estimate.

\begin{proof}[Proof of Proposition~\ref{prop:graded}]
We treat the case in which $Q$ is invertible.  Let
\[
 G=\diag(I,-I),
 \qquad
 C_0=\frac12GT
 =\frac12\begin{pmatrix}0&P\\-Q&0\end{pmatrix}.
\]
A direct multiplication gives $[G,C_0]=T$.  Put $S=Q^*$.  The assumptions imply
$\|S\|\le1$ and $\|S^{-1}\|\le\kappa$.  Choose unitary matrices $U,W$ on the first block satisfying Theorem~\ref{thm:s1-poincare}, and define $V=S^{-1}WS$, $B=G$, $C=C_0+\diag(U^*,V^*)$. The diagonal summand commutes with $G$, so $[B,C]=T$.  Also
$\|V\|\le\|S^{-1}\|\,\|S\|\le\kappa$, and hence $\|B\|=1$, $\|C\|\le\|C_0\|+1+\kappa\le C(1+\kappa)$. It remains to prove a dimension-free bound for $\Gamma(B,C)$.

\emph{Step 1: block-diagonal dual witnesses.}
Consider first a block-diagonal witness $Z_0=\diag(Z_{11},Z_{22})$ and put $R_0=[Z_0,C^*]$.  Since
\[
 C_0^*=\frac12\begin{pmatrix}0&-S\\P^*&0\end{pmatrix},
\]
block multiplication gives
\[
 R_0=
 \begin{pmatrix}
 [Z_{11},U]&\frac12(SZ_{22}-Z_{11}S)\\[1mm]
 \frac12(Z_{22}P^*-P^*Z_{11})&[Z_{22},V]
 \end{pmatrix}.
\]
Taking the block-diagonal conditional expectation and the upper-right corner, and using Lemma~\ref{lem:block-facts}, yields
\[
 \|[Z_{11},U]\|_1+\|[Z_{22},V]\|_1\le\|R_0\|_1,
 \qquad
 \|SZ_{22}-Z_{11}S\|_1\le2\|R_0\|_1.
\]
Define $Z'=SZ_{22}S^{-1}$.  The identity $Z'-Z_{11}=(SZ_{22}-Z_{11}S)S^{-1}$ gives $\|Z'-Z_{11}\|_1\le2\kappa\|R_0\|_1$. Furthermore, because $V=S^{-1}WS$, $[Z',W]=S[Z_{22},V]S^{-1}$, and therefore
\[
 \|[Z',W]\|_1
 \le\|S\|\,\|S^{-1}\|\,\|[Z_{22},V]\|_1
 \le\kappa\|R_0\|_1.
\]
Since $W$ is unitary,
\[
 \begin{aligned}
 \|[Z_{11},W]\|_1
 &\le\|[Z',W]\|_1+\|[Z_{11}-Z',W]\|_1\\
 &\le\kappa\|R_0\|_1+2\|Z_{11}-Z'\|_1\\
 &\le5\kappa\|R_0\|_1.
 \end{aligned}
\]
Theorem~\ref{thm:s1-poincare}, applied to the expander pair $U,W$, now shows $d_1(Z_{11})\le C\kappa\|R_0\|_1$. Choose $\lambda\in\mathbb C$ with
$\|Z_{11}-\lambda I\|_1\le2d_1(Z_{11})$.  Similarity by $S$ costs at most
$\|S\|\,\|S^{-1}\|\le\kappa$ in trace norm, so
\[
 \begin{aligned}
 \|Z_{22}-\lambda I\|_1
 &=\|S^{-1}(Z'-\lambda I)S\|_1\\
 &\le\kappa\bigl(\|Z'-Z_{11}\|_1+\|Z_{11}-\lambda I\|_1\bigr)\\
 &\le C_\kappa\|R_0\|_1.
 \end{aligned}
\]
The trace norm of a block-diagonal matrix is the sum of the trace norms of its diagonal blocks.  Hence
\[
 d_1(Z_0)
 \le\|Z_{11}-\lambda I\|_1+\|Z_{22}-\lambda I\|_1
 \le C_\kappa\|[Z_0,C^*]\|_1.
\]

\emph{Step 2: arbitrary dual witnesses.}
Write
\[
 Z=\begin{pmatrix}Z_{11}&Z_{12}\\Z_{21}&Z_{22}\end{pmatrix},
 \qquad
 Z_0=\diag(Z_{11},Z_{22}),
 \qquad
 R=Z-Z_0.
\]
Because $G=\diag(I,-I)$,
\[
 [G,Z]=2\begin{pmatrix}0&Z_{12}\\-Z_{21}&0\end{pmatrix}.
\]
Lemma~\ref{lem:block-facts}(iii) therefore gives the exact trace-norm identity $\|R\|_1=\tfrac12\|[G,Z]\|_1$. Also
\[
 \|[Z_0,C^*]\|_1
 \le\|[Z,C^*]\|_1+\|[R,C^*]\|_1
 \le\|[Z,C^*]\|_1+2\|C\|\,\|R\|_1.
\]
For every scalar $\lambda$,
$\|Z-\lambda I\|_1\le\|Z_0-\lambda I\|_1+\|R\|_1$, so
$d_1(Z)\le d_1(Z_0)+\|R\|_1$.  Combining these estimates with Step 1 gives
\[
 d_1(Z)
 \le C_\kappa\bigl(\|[Z,C^*]\|_1+\|[G,Z]\|_1\bigr).
\]
Since $B=G=G^*$, this is exactly the assertion $\Gamma(B,C)\le M_\kappa$ after enlarging the constant.

\emph{Step 3: absorb a relative perturbation.}
Represent $tT$ by $B_t=\sqrt t\,B$, $C_t=\sqrt t\,C$. The numerator in the definition of $\Gamma$ is unchanged, whereas both commutators in the denominator are multiplied by $\sqrt t$.  Thus
\[
 \Gamma(B_t,C_t)=t^{-1/2}\Gamma(B,C)
 \le M_\kappa t^{-1/2}.
\]
Let $E=A-tT$ and choose
$\eta_\kappa\le \cabs{}M_\kappa^{-2}$.  If
$\|E\|\le\eta_\kappa t$, then $\|E\|\le \cabs\Gamma(B_t,C_t)^{-2}$, so Lemma~\ref{lem:absorption} supplies $X,Y$ with
$A=[B_t+X,C_t+Y]$ and
\[
 \max\{\|X\|,\|Y\|\}
 \le4M_\kappa t^{-1/2}\|E\|
 \le4M_\kappa\eta_\kappa\sqrt t.
\]
The unperturbed factors have norm at most $M_\kappa\sqrt t$; hence both perturbed factors have norm at most $C_\kappa\sqrt t$, and their product is at most $M_\kappa t$ after one final enlargement of the constant.

If $P$ rather than $Q$ is invertible, conjugate by the unitary matrix that interchanges the two summands of the grading.  This preserves all relevant norms and reduces to the case just proved.
\end{proof}
\section{Invertible links and trace gaps}\label{sec:links}

Proposition~\ref{prop:graded} gives a commutator bound near graded matrices
with an invertible link.  We now extend that bound to matrices whose
diagonal blocks need not be small.  First, a similarity and paving produce
compressions that lie in the neighborhoods covered by the proposition.
The gluing estimate from Section~\ref{sec:gluing} then gives a bound for
the full matrix whenever it has an invertible off-diagonal block with a
controlled inverse.

We next replace this invertibility hypothesis with a substantial trace
imbalance between two subspaces of equal dimension.  Mixing coordinates
and paving turn the trace imbalance into invertible links on finitely many
compressions.  This trace condition is what the final argument will use:
Section~\ref{sec:rank} obtains it from a lower bound on spectral mass in
even dimension.  We state the two results before proving them.

\begin{theorem}[Invertible-link theorem]\label{thm:full-link}
For every $\tau>0$ there is $C_\tau<\infty$ with the following property.  Let $\mathcal H$ be a nonzero finite-dimensional complex Hilbert space and let
\[
 A=\begin{pmatrix}A_0&X\\Y&D\end{pmatrix}
 \in\mathcal B(\mathcal H\oplus \mathcal H)
\]
be traceless, with upper-right block satisfying $s_{\min}(X)\ge\tau\norm{A}$.  Then $\muc(A)\le C_\tau\norm{A}$.
\end{theorem}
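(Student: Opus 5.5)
Throughout I normalize $\norm{A}=1$; the case $A=0$ is trivial. The plan has four steps: change basis until $A$ is close, on finitely many compressions, to graded matrices with a uniformly invertible link; bound each compression by Proposition~\ref{prop:graded}(ii); reassemble by Lemma~\ref{lem:finite-gluing}; and undo the changes of basis by Lemma~\ref{lem:basic-mu}.

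\emph{Step 1: remove $A_0$ by a triangular similarity.} Put $K=-X^{-1}A_0$, so that $\norm{K}\le\tau^{-1}$, and take
\[
 S=\begin{pmatrix}I&0\\K&I\end{pmatrix},\qquad S^{-1}=\begin{pmatrix}I&0\\-K&I\end{pmatrix}.
\]
A direct block computation shows that $A'=S^{-1}AS$ has upper-left block $A_0+XK=0$ and upper-right block $X$. Its lower-right block is $D'=D+YK-KA_0-KXK$, of norm at most $C(1+\tau^{-1})^2$. Since $\Tr A'=\Tr A=0$ and the upper-left block vanishes, $D'$ is traceless. Also $\norm{S}\,\norm{S^{-1}}\le(1+\tau^{-1})^2$, so Lemma~\ref{lem:basic-mu}(iii) reduces the theorem to bounding $\muc(A')$ by a constant depending only on $\tau$.

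\emph{Step 2: make the link positive and the remaining diagonal block hollow.} Write the polar decomposition $X=W\abs{X}$ and conjugate $A'$ by $\diag(W,I)$. The upper-right block becomes $\abs{X}\succeq\tau I$, and the upper-left block stays zero. Next, Theorem~\ref{thm:hollow}(i) gives a unitary $V$ making $D'$ hollow; conjugate by $\diag(V,V)$. The link becomes $V^*\abs{X}V$, which is still positive and still $\succeq\tau I$. All of this is unitary, so Lemma~\ref{lem:basic-mu}(ii) applies. The resulting matrix is
\[
 \begin{pmatrix}0&L\\Y''&D''\end{pmatrix},\qquad L\succeq\tau I,\quad D''\ \text{hollow}.
\]

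\emph{Step 3: pave the hollow block.} Fix $\eps>0$ depending only on $\tau$, to be chosen later. Apply Ravichandran--Srivastava paving \cite{RS} to the hollow matrix $D''$. This gives a coordinate partition $J_1,\dots,J_r$ of $\mathcal H$, with $r=r(\eps,\tau)$ independent of the dimension, such that
\[
 \norm{P_{J_k}D''P_{J_k}}\le\eps\,\norm{D''}\le C_\tau\eps.
\]
Use the same partition in both summands of $\mathcal H\oplus\mathcal H$. The $k$th compression is then
\[
 \begin{pmatrix}0&L_k\\Y_k&D_k\end{pmatrix},
\]
where $D_k$ is traceless because $D''$ is hollow, $\norm{D_k}\le C_\tau\eps$, and $L_k\succeq\tau I$ because a compression of a positive operator bounded below by $\tau I$ is again bounded below by $\tau I$. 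Let $T_k$ be the graded matrix with blocks $L_k,Y_k$ and set $t_k=\norm{T_k}$. Then $\tau\le t_k\le C_\tau$, and $T_k/t_k$ has norm one with link inverse bounded by $\kappa=C_\tau/\tau$. The compression differs from $t_k(T_k/t_k)$ by at most $C_\tau\eps$. Choosing $\eps$ so that $C_\tau\eps\le\eta_\kappa\tau$, Proposition~\ref{prop:graded}(ii) gives that each compression has cost at most $M_\kappa t_k\le C'_\tau$. (The exchange of halves mentioned in the overview is unnecessary here, since the proposition allows either block to be the invertible one.)

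\emph{Step 4: glue.} The $2r$ coordinate blocks are not all traceless individually, so I glue along the $r$ paired subspaces $P_{J_k}\mathcal H\oplus P_{J_k}\mathcal H$, whose compressions are exactly the traceless matrices above. Lemma~\ref{lem:finite-gluing} with $s=r$ bounds $\muc$ of the conjugated $A'$ by $\Cgl\sqrt r\,(C_\tau+C'_\tau)$, which depends only on $\tau$. Undoing the similarity of Step~1 gives $C_\tau$.

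I expect the main obstacle to be Step~3: invoking the paving theorem with a dimension-free number of classes $r(\eps)$ for a hollow non-Hermitian matrix, in operator norm, and bookkeeping the constants so that $\eps$, and hence $r$, depend only on $\tau$. If the cited multi-paving is stated only for Hermitian matrices, I would pave the real and imaginary parts of $D''$ simultaneously by one partition.
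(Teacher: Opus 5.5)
Your proposal is correct and follows essentially the paper's route: the shear of Lemma~\ref{lem:shear}, then (as in Lemma~\ref{lem:graded-corner}) the polar decomposition, a common hollowization of the lower-right block in both halves, paving with the same partition in both summands, Proposition~\ref{prop:graded}(ii) on each paired traceless compression, finite-block gluing, and Lemma~\ref{lem:basic-mu}(iii) to undo the similarity. One harmless slip: the lower-right block of $S^{-1}AS$ is actually $D-KX=D+X^{-1}A_0X$, and your displayed formula for $D'$ mixes in terms belonging to the lower-left block $Y+DK$; the argument survives because you only use that this block is traceless (from $\Tr A'=0$ and the zero upper-left block) and has norm bounded in terms of $\tau$ (from $\|S\|\,\|S^{-1}\|\le(1+\tau^{-1})^2$), and both facts hold for the correct block.
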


The equal block dimensions make $X$ square.  Thus, after normalizing
$\|A\|=1$, the hypothesis $s_{\min}(X)\ge\tau$ says that $X$ is invertible
with $\|X^{-1}\|\le\tau^{-1}$.  The next theorem replaces this explicit
invertibility assumption with a trace condition.

\begin{theorem}[Balanced trace-gap theorem]\label{thm:trace-gap}
For every $\delta>0$ there is $C_{\delta}<\infty$ with the following property.  Let $\mathcal H$ be a nonzero finite-dimensional complex Hilbert space and let
\(A=\left(\begin{smallmatrix}A_0&X\\Y&D\end{smallmatrix}\right)
\in\mathcal B(\mathcal H\oplus \mathcal H)\)
be traceless.  If $|\Tr A_0|\ge\delta\,\dim(\mathcal H)\,\norm{A}$, then
$\muc(A)\le C_{\delta}\norm{A}$.
\end{theorem}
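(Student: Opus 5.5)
The plan is to follow the route sketched in the overview: convert the trace imbalance into uniformly invertible links on a bounded number of traceless compressions, apply Theorem~\ref{thm:full-link} to each compression, and reassemble with Lemma~\ref{lem:finite-gluing}.

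\emph{Normalization and hollowing.} Normalize $\|A\|=1$, put $m=\dim\mathcal H$ and $\alpha=\Tr A_0/m$, so $|\alpha|\ge\delta$. Since $\Tr A=0$, we have $\Tr D=-\alpha m$. By Theorem~\ref{thm:hollow}(i), choose unitaries $U_1,U_2$ on $\mathcal H$ that hollow $A_0-\alpha I$ and $D+\alpha I$. Conjugating by $U_1\oplus U_2$, which is allowed by Lemma~\ref{lem:basic-mu}(ii), makes the diagonals of $A_0$ and $D$ identically $\alpha$ and $-\alpha$. Write $x_i=X_{ii}$ and $y_i=Y_{ii}$.

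\emph{Phase mixing.} Conjugate by $W_Z=2^{-1/2}\bigl(\begin{smallmatrix}I&Z\\-Z^*&I\end{smallmatrix}\bigr)$ with $Z=\diag(z_i)$ and $|z_i|=1$. Each pair $\{e_i,e_{m+i}\}$ spans an invariant two-dimensional coordinate space for $W_Z$. Hence the diagonal blocks of the new matrix still have diagonals determined pairwise, and every pair compression remains traceless, with trace $\alpha-\alpha=0$. The new upper-right block has $i$th diagonal entry $\alpha z_i+x_i/2-y_iz_i^2/2$. Averaging $|\cdot|^2$ over $z_i\in\mathbb T$ gives $|\alpha|^2+(|x_i|^2+|y_i|^2)/4$ by orthogonality of the Fourier modes $z_i,1,z_i^2$. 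So some $z_i$ makes this entry have modulus at least $|\alpha|\ge\delta$. Call the resulting matrix $A'$, with blocks $A_0',X',Y',D'$.

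\emph{Paving the remainder.} Subtract the diagonal parts of all four blocks of $A'$. What remains is a matrix $H$ with $\|H\|\le C$, by Lemma~\ref{lem:block-facts}(i), and each of its four blocks is hollow. Apply multi-paving (Ravichandran--Srivastava, the optimal-order bound cited in the introduction) to the four blocks of $H$ simultaneously, using a single partition $[m]=S_1\sqcup\dots\sqcup S_r$ with $r=r(\delta)$. On every $S_k$ the compressed hollow parts then have norm at most $\delta/4$.

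Let $\mathcal H_k$ be spanned by $e_i,e_{m+i}$ for $i\in S_k$. This is an orthogonal decomposition of the whole space into $r$ pieces of equal-dimension pairs. Each compression $A'_k$ is traceless because the pair traces vanish. Its upper-right block is $\diag(X'_{ii})_{i\in S_k}$ plus a perturbation of norm at most $\delta/4$, so $s_{\min}\ge\delta-\delta/4\ge\delta/2$. Since $\|A'_k\|\le1$, the hypothesis of Theorem~\ref{thm:full-link} holds with $\tau=\delta/2$ (or with $\tau$ replaced by $\delta/2$ divided by the actual norm, which only helps). Therefore $\muc(A'_k)\le C_{\delta/2}$.

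\emph{Reassembly and main obstacle.} Lemma~\ref{lem:finite-gluing} then gives $\muc(A)=\muc(A')\le\Cgl\sqrt r\,(1+C_{\delta/2})$, a bound depending only on $\delta$. The main obstacle is the paving step. It needs an operator-norm paving of a bounded number of hollow matrices, with the number of classes depending only on the target accuracy $\delta/4$ and not on $m$. I would invoke the Ravichandran--Srivastava multi-paving theorem for $O(1)$ matrices. A secondary point is to check that the small diagonal entries of the off-diagonal blocks do not matter: all four diagonal parts are exactly accounted for, since the diagonal of the link is kept and the diagonals of $A_0',D'$ lie within pairs, so the compression structure is unaffected.
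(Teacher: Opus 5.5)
Your proposal is correct and follows essentially the same route as the paper's proof: hollowize $A_0-\alpha I$ and $D+\alpha I$, mix paired coordinates by $W_Z$ with phases chosen via orthogonality of $1,z,z^2$, pave, use that $P_{S_j}\oplus P_{S_j}$ commutes with $W_Z$ to keep the compressions traceless, then apply Theorem~\ref{thm:full-link} with $\tau=\delta/2$ and reassemble by Lemma~\ref{lem:finite-gluing}. The one difference is that you pave all four hollow blocks simultaneously, whereas the paper paves only the hollow part $\widetilde X-\diag(\widetilde X_{ii})$ of the link via Corollary~\ref{thm:paving}; that is enough, because Theorem~\ref{thm:full-link} places no condition on the other blocks, so your extra paving and your ``secondary point'' about those blocks are unnecessary but harmless.
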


Since $A$ is traceless and the two blocks have the same dimension, the lower-right block then has normalized trace of the same size and opposite sign.  The theorem says that such a trace imbalance between two halves already forces a dimension-free commutator bound.

\subsection{Paving, shearing, and proof of the invertible-link theorem}

We first treat a zero upper-left block.  After making the invertible
link positive and the remaining diagonal block hollow, paving makes each
compression close to a graded matrix.  Positivity ensures that the link
stays invertible on every compression.  We choose the paving accuracy to
enter the neighborhood in Proposition~\ref{prop:graded}; the number of
blocks then depends only on the lower bound for the link.  Finite-block
gluing recovers the full matrix.  A triangular similarity reduces the
general case to this zero-corner case.

The paving input is Ravichandran and Srivastava's multi-paving theorem
\cite{RS}, stated here in the one-sided Hermitian form that we use.

\begin{theorem}[Ravichandran--Srivastava \cite{RS}]\label{thm:paving-rs}
Let $k\ge1$ be an integer and let $0<\eps<1$.  Given zero-diagonal Hermitian contractions $A^{(1)},\ldots,A^{(k)}\in \Mn$, there exists a partition $\{S_1,\ldots,S_r\}$ of $[n]$ where $r\le18k\eps^{-2}$ such that
\[
 \lambda_{\max}\bigl(P_{S_j}A^{(l)}P_{S_j}\bigr)<\eps
 \quad\text{for }j\in [r]\text{ and }l \in [k].
\]
\end{theorem}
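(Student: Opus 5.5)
This result is imported from \cite{RS}, and the paper takes it as given.  If I were to verify it, I would reconstruct Ravichandran and Srivastava's argument in three steps: reduce several matrices to one structured matrix, show that some partition is no worse than an averaged polynomial, and bound that polynomial's largest root.

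\emph{Step 1: pass to one matrix.}  I would form the block-diagonal matrix $\mathbf A=\bigoplus_{l=1}^kA^{(l)}$ on $\C^n\otimes\C^k$.  A partition $\{S_1,\ldots,S_r\}$ of $[n]$ acts on all $k$ copies simultaneously.  This gives
\[
 \lambda_{\max}\bigl((P_{S_j}\otimes I)\mathbf A(P_{S_j}\otimes I)\bigr)
 =\max_l\lambda_{\max}\bigl(P_{S_j}A^{(l)}P_{S_j}\bigr).
\]
So it suffices to pave one zero-diagonal Hermitian contraction whose coordinates are grouped into $n$ bundles of size $k$.  The partition must respect these bundles.

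\emph{Step 2: interlacing families.}  I would choose a uniformly random colouring $\sigma:[n]\to[r]$ and consider
\[
 p_\sigma(x)=\prod_{j=1}^r\det\bigl(xI-(P_{S_j}\otimes I)\mathbf A(P_{S_j}\otimes I)\bigr),
 \qquad S_j=\sigma^{-1}(j).
\]
These polynomials are real-rooted, being characteristic polynomials of the Hermitian matrix $\bigoplus_j(P_{S_j}\otimes I)\mathbf A(P_{S_j}\otimes I)$.  I would assign colours one coordinate at a time and show that every partial average is real-rooted.  Writing $\mathbf A+I$ as a Gram matrix, each compression becomes a sum of rank-$\le k$ positive terms, so the averages are mixed characteristic polynomials and the real stability machinery of \cite{MSS} applies.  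That makes the family interlacing.  Hence some colouring $\sigma$ satisfies
\[
 \lambda_{\max}(p_\sigma)\le\lambda_{\max}\bigl(\mathbb E_\sigma\,p_\sigma\bigr).
\]

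\emph{Step 3: the root bound (main obstacle).}  The hard part is bounding the largest root of $\mathbb E_\sigma p_\sigma$ by something of order $\sqrt{k/r}$.  The naive route is to apply \cite[Corollary~1.5]{MSS} to the Gram vectors of $(I+\mathbf A)/2$.  That fails because those vectors have squared norm $1/2$, so it yields only a constant bound.  The zero diagonal has to be used inside the polynomial itself.  I would first write $\mathbb E_\sigma p_\sigma$ as a product or derivative formula applied to $\det(xI-\mathbf A)$; the diagonal vanishing kills the first-order terms.  I would then run a multivariate barrier argument in the style of \cite{MSS}, tracking the barrier through the $n$ rank-$k$ updates.  Taking $r\approx 18k\eps^{-2}$ should put the root below $\eps$, and strict inequality should follow by choosing $r=\lfloor 18k\eps^{-2}\rfloor$ with a little slack.

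This barrier estimate is the genuinely new content of \cite{RS}, so for the paper I would cite it rather than redo it.
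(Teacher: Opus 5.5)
Your decision to cite the result is exactly what the paper does: it quotes the theorem from \cite{RS} and gives no proof. Your Step~1 is also a correct reformulation, passing to $\mathbf A=\bigoplus_lA^{(l)}$ with coordinates grouped into bundles. The reconstruction you outline, however, fails at Step~2 as soon as $k\ge2$. So it is not a route to verifying the theorem.

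\emph{Where it fails.} The claim that the averages of the bundled polynomials $p_\sigma$ are real-rooted is false. Take $k=2$, $n=r=2$, and $A^{(1)}=A^{(2)}=\left(\begin{smallmatrix}0&1\\1&0\end{smallmatrix}\right)$. If $\sigma$ is constant, then $p_\sigma(x)=x^4(x^2-1)^2$. If $\sigma$ is injective, each compression $(P_{\{i\}}\otimes I)\mathbf A(P_{\{i\}}\otimes I)$ is zero because the diagonals vanish, so $p_\sigma(x)=x^8$. Hence
\[
 \mathbb E_\sigma\, p_\sigma(x)=\tfrac12x^4\bigl(2x^4-2x^2+1\bigr),
\]
which has non-real roots. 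More generally, $x^4-2wx^2+w$ is not real-rooted for any $0<w<1$. So these two kinds of leaves have no common interlacer, the family is not interlacing under any weighting that charges both, and the inequality $\lambda_{\max}(p_\sigma)\le\lambda_{\max}(\mathbb E_\sigma p_\sigma)$ is unavailable.

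\emph{Why it fails.} Write $\mathbf A+I=V^*V$ and $W_i=V(e_ie_i^*\otimes I_k)V^*$. Then
$p_\sigma(x)=\bigl(\tfrac{x}{x+1}\bigr)^{(r-1)nk}\det\bigl((x+1)I-\sum_iW_i\otimes e_{\sigma(i)}e_{\sigma(i)}^*\bigr)$.
Thus each bundle is a rank-$k$ random update whose $k$ rank-one pieces share the colour $\sigma(i)$, so they are not independent. The identity in \cite{MSS} that writes an expected characteristic polynomial as a mixed characteristic polynomial is what gives real stability. It holds only for independent rank-one updates, so it does not apply here. Concretely, when all $A^{(l)}$ coincide, your leaves are $k$th powers of the $k=1$ leaves, and averages of such powers are generally not real-rooted.

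This cannot be fixed by more careful bookkeeping. The several-matrix case needs a different real-rooted object, or a different reduction, whose largest root still controls $\max_l\lambda_{\max}(P_{S_j}A^{(l)}P_{S_j})$. Supplying that mechanism is part of the content of \cite{RS}, together with the root bound that your Step~3 leaves as a heuristic. For $k=1$ your outline is in the right spirit, since the coloured vectors $u_i\otimes e_{\sigma(i)}$ are then independent and rank one. Since the paper uses the theorem only as a black box, citing it is the right call, but the bundled interlacing family should not be presented as the verification route.
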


Applying the Ravichandran--Srivastava theorem to
$\pm\operatorname{Re}T$ and $\pm\operatorname{Im}T$ gives the following
single-matrix consequence.  We record the elementary specialization with
the number of classes prescribed.

\begin{corollary}\label{thm:paving}
Given a zero-diagonal matrix $T\in \Mn$ and an integer $r\ge1$, there exists a partition $\{S_1,\ldots,S_r\}$ of $[n]$ such that
\[
 \norm{P_{S_j}TP_{S_j}}\le \frac{18}{\sqrt r}\norm{T}
 \quad\text{for }j\in [r].
\]
\end{corollary}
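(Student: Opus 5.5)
We derive Corollary~\ref{thm:paving} from Theorem~\ref{thm:paving-rs} by paving the real and imaginary parts of $T$ from both sides. If $T=0$ or $n=1$ (so that $T=0$ by hollowness), any partition works, so assume $T\ne0$ and normalize $\|T\|=1$. Write $T=H_1+iH_2$ with $H_1=\operatorname{Re}T=(T+T^*)/2$ and $H_2=\operatorname{Im}T=(T-T^*)/(2i)$. Both are Hermitian contractions, since $\|T^*\|=\|T\|$. Both are zero-diagonal, because the diagonal entries of $T$ vanish and hence so do those of $T^*$. We then apply Theorem~\ref{thm:paving-rs} with $k=4$ to the contractions $A^{(1)}=H_1$, $A^{(2)}=-H_1$, $A^{(3)}=H_2$, $A^{(4)}=-H_2$, with an accuracy $\eps$ chosen so that $18\cdot4\,\eps^{-2}\le r$. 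That is, we take $\eps=\sqrt{72/r}=6\sqrt2/\sqrt r$; this is less than $1$ once $r>72$.

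The theorem gives a partition into at most $r$ classes, which we pad with empty classes to exactly $r$. On each class $S_j$ it gives $\lambda_{\max}(\pm P_{S_j}H_lP_{S_j})<\eps$, so the Hermitian compressions satisfy $\|P_{S_j}H_lP_{S_j}\|<\eps$. The triangle inequality then yields
\[
 \|P_{S_j}TP_{S_j}\|\le\|P_{S_j}H_1P_{S_j}\|+\|P_{S_j}H_2P_{S_j}\|<2\eps=\frac{12\sqrt2}{\sqrt r}<\frac{18}{\sqrt r}.
\]

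For small $r$, namely $r\le72$ (which includes every case with $\eps\ge1$), no paving is needed. We take $S_1=[n]$ and the remaining classes empty. Then $\|P_{S_j}TP_{S_j}\|\le\|T\|$, and $\|T\|\le18/\sqrt r$ whenever $r\le324$, which covers this range.

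The only care needed is in the constants. One must check that the four-matrix count $r\le 72\eps^{-2}$ combined with the loss of $2$ from the triangle inequality stays within $18$; it does, since $12\sqrt2\approx16.97$. One must also make sure the trivial partition handles the range $r\le72$, where $\eps<1$ fails.
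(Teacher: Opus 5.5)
Your proof is correct and follows the paper's argument: apply Theorem~\ref{thm:paving-rs} with $k=4$ to $\pm\operatorname{Re}T$ and $\pm\operatorname{Im}T$, lose a factor $2$ from the triangle inequality, and use the trivial partition for small $r$. The only difference is in the constants. The paper takes $\varepsilon=9/\sqrt r$ for $r>81$, which lands exactly on $18/\sqrt r$, whereas your $\varepsilon=\sqrt{72/r}$ for $r>72$ gives the slightly better constant $12\sqrt2$ and spells out the padding with empty classes.
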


\begin{proof}
The case $T=0$ is trivial, so assume $\norm{T}=1$ by homogeneity.  If $r\le 81$, then $18/\sqrt r\ge 2$, and any partition works, because every compression of $T$ has norm at most $\norm{T}$.

Let $r>81$ and put $\eps=9/\sqrt r$, so that $0<\eps<1$ and $72\eps^{-2}<r$.  The four matrices $\pm\operatorname{Re}T$ and $\pm\operatorname{Im}T$ are Hermitian contractions with zero diagonal, so Theorem~\ref{thm:paving-rs} with $k=4$ gives a partition $\{S_1,\ldots,S_{r'}\}$ of $[n]$ with $r'\le 72\eps^{-2}<r$ on which all four have largest eigenvalue less than $\eps$.  Bounding a Hermitian matrix and its negative bounds its norm, so $\norm{P_{S_j}(\operatorname{Re}T)P_{S_j}}\le\eps$ and $\norm{P_{S_j}(\operatorname{Im}T)P_{S_j}}\le\eps$ for every $j$, whence $\norm{P_{S_j}TP_{S_j}}\le2\eps=18/\sqrt r$.
\end{proof}

We first combine this paving consequence with the graded regularity result to
handle matrices whose upper-left block is zero.

\begin{lemma}\label{lem:graded-corner}
For every $\tau>0$ there is $C'_\tau<\infty$ with the following property.  Let $\mathcal H$ be a nonzero finite-dimensional complex Hilbert space and let
\[
 A=\begin{pmatrix}0&X\\Y&D\end{pmatrix}
 \in\mathcal B(\mathcal H\oplus \mathcal H)
\]
satisfy $s_{\min}(X)\ge\tau\norm{A}$, with $D$ traceless.  Then $\muc(A)\le C'_\tau\norm{A}$.
\end{lemma}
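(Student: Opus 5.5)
We follow the route announced in the overview: positive link, hollow diagonal block, paving, the graded neighborhood estimate, then finite-block gluing.

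\emph{Normalization.} Normalize $\|A\|=1$, so $s_{\min}(X)\ge\tau$. Write the polar decomposition $X=\Omega|X|$ with $\Omega$ unitary on $\mathcal H$ ($X$ is square and invertible). Conjugating by the unitary $\diag(\Omega,I)$ replaces $X$ by $|X|$, which is positive with $|X|\succeq\tau I$. It replaces $Y$ by $Y\Omega$ and keeps the zero corner; Lemma~\ref{lem:basic-mu}(ii) shows this changes nothing.

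\emph{Hollowizing without losing positivity.} Next apply Theorem~\ref{thm:hollow}(i) to the traceless $D$, obtaining a unitary $W$ with $W^*DW$ hollow. Conjugate by $\diag(W,W)$. The corner stays $0$, the link becomes $W^*|X|W\succeq\tau I$ (still positive), and $D$ becomes hollow. Now compressions behave well. For a coordinate set $S$, let $P_S$ act on both copies of $\mathcal H$. The compression of $A$ to $P_S\mathcal H\oplus P_S\mathcal H$ has corner $0$ and lower-right block $P_SDP_S$, which is traceless since $D$ is hollow. Its link $P_SXP_S$ satisfies $P_SXP_S\succeq\tau P_S$, so $s_{\min}\ge\tau$ on $P_S\mathcal H$. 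This is exactly why positivity was arranged.

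\emph{Paving.} Apply Corollary~\ref{thm:paving} to $D$ with $r=r(\tau)$ chosen so that $18/\sqrt r\le\eta_\kappa\tau'$, where $\kappa=\tau^{-1}$ and $\tau'$ is fixed below. We get a partition $S_1,\dots,S_r$ with $\|P_{S_j}DP_{S_j}\|\le 18/\sqrt r$. On each block the compression $A_j$ equals
\[
 T_j=\begin{pmatrix}0&X_j\\Y_j&0\end{pmatrix}
\]
plus a block-diagonal error of norm at most $18/\sqrt r$. Here $\|T_j\|\le1$ and $\|X_j^{-1}\|\le\kappa$.

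\emph{Applying Proposition~\ref{prop:graded}(ii).} Take $t=1$. Then $\|A_j-T_j\|\le\eta_\kappa$ once $18/\sqrt r\le\eta_\kappa$, and $A_j$ is traceless. Hence $\muc(A_j)\le M_\kappa$, with the block $P$ (the upper-right link) invertible, as the proposition allows. So $\tau'=1$ suffices.

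\emph{Gluing.} The decomposition $\mathcal H\oplus\mathcal H=\bigoplus_j(P_{S_j}\mathcal H\oplus P_{S_j}\mathcal H)$ has $r$ summands with traceless compressions $A_j$. Lemma~\ref{lem:finite-gluing} therefore gives
\[
 \muc(A)\le\Cgl\sqrt r\,(1+M_\kappa),
\]
which depends only on $\tau$. Empty classes are discarded.

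The main point needing care is invertibility of the link under compression. A general invertible $X$ can have singular compressions $P_SXP_S$; positivity of the link, preserved by the common unitary $\diag(W,W)$, is what rules this out. The rest is bookkeeping of constants, and the paving accuracy depends only on $\tau$, as required.
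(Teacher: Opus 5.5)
Your proposal is correct and follows the paper's proof essentially step for step: a polar decomposition makes the link positive, and the common unitary $\diag(W,W)$ makes $D$ hollow while keeping the link $\succeq\tau I$. You then pave $D$, apply Proposition~\ref{prop:graded}(ii) to each paired traceless compression, and glue with Lemma~\ref{lem:finite-gluing}. The only difference is cosmetic: you apply the proposition with $t=1$ and $\kappa=\tau^{-1}$, while the paper rescales each compression by $t_j=\|T_j\|$ and uses $\kappa=2/\tau$; your choice is legitimate because $\|T_j\|=\max\{\|P_{S_j}XP_{S_j}\|,\|P_{S_j}YP_{S_j}\|\}\le1$ by Lemma~\ref{lem:block-facts}(iii), both blocks being corners of a contraction, and because $\tau\le s_{\min}(X)\le\|A\|=1$ gives $\kappa\ge1$.
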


\begin{proof}
The case $A=0$ is trivial, and by homogeneity (Lemma~\ref{lem:basic-mu}(i)) we may assume $\|A\|=1$, so that $s_{\min}(X)\ge\tau$.  We first normalize the upper-right block.  Write the polar decomposition
$X=U|X|$.  Conjugating $A$ by the block-diagonal unitary matrix $\diag(U,I)$ changes the upper-right block to $U^*X=|X|$ and preserves the hypotheses and $\muc(A)$.  Thus we may assume $X=X^*\succeq\tau I$. By Theorem~\ref{thm:hollow}(i), choose a unitary $W$ on $\mathcal H$ such that $W^*DW$ is hollow.  Conjugating both copies of $\mathcal H$ by $W$ changes $X$ to $W^*XW$, which still satisfies $W^*XW\succeq\tau I$.  We may therefore assume simultaneously that $X\succeq\tau I$ and $\diag D=0$. 
Let $\eta_{2/\tau}$ be the neighborhood radius in Proposition~\ref{prop:graded}.  Choose
\[
 0<\varepsilon\le\min\left\{1,\frac{\tau}{2},
 \frac{\tau\eta_{2/\tau}}2\right\}.
\]
Since $D$ is a compression of the contraction $A$, $\|D\|\le1$. Put $r=\lceil 324 \eps^{-2}\rceil$.  Corollary~\ref{thm:paving} partitions the index set of $\mathcal H$ into sets $S_1,\ldots,S_r$. Write $P_j=P_{S_j}$, so that $\sum_jP_j=I$ and $\|P_jDP_j\|\le\varepsilon$ for every $j$.

Use the paired orthogonal decomposition
\[
 (P_1\mathcal H\oplus P_1\mathcal H)\oplus\cdots\oplus(P_r\mathcal H\oplus P_r\mathcal H).
\]
The $j$th diagonal compression is
\[
 A_j=\begin{pmatrix}
 0&P_jXP_j\\ P_jYP_j&P_jDP_j
 \end{pmatrix}.
\]
Because $D$ is hollow, $\Tr(P_jDP_j)=0$, so $\Tr A_j=0$.  Separate its graded part by setting
\[
 T_j=\begin{pmatrix}
 0&P_jXP_j\\ P_jYP_j&0
 \end{pmatrix},
 \qquad
 t_j=\|T_j\|.
\]
Lemma~\ref{lem:block-facts}(iii) gives $t_j=\max\{\|P_jXP_j\|,\|P_jYP_j\|\}$. On the range of $P_j$, positivity of $X$ implies
$P_jXP_j\succeq\tau P_j$.  Hence this compression is invertible on $P_j\mathcal H$ and $\|(P_jXP_j)^{-1}\|\le\tau^{-1}$, $t_j\ge\tau$. Also $T_j=A_j-\diag(0,P_jDP_j)$, and $A_j$ is a compression of a contraction, so
\[
 t_j\le\|A_j\|+\|P_jDP_j\|\le1+\varepsilon\le2.
\]
Thus $t_j^{-1}T_j$ has norm one, and its upper-right block has inverse norm at most $t_j\|(P_jXP_j)^{-1}\| \le\tfrac{2}{\tau}$. Moreover,
\[
 \left\|\frac{A_j}{t_j}-\frac{T_j}{t_j}\right\|
 =\frac{\|P_jDP_j\|}{t_j}
 \le\frac{\varepsilon}{\tau}
 \le\eta_{2/\tau}.
\]
Proposition~\ref{prop:graded}, using its version with the upper-right block invertible, therefore yields $\muc(A_j/t_j)\le M_{2/\tau}$. By homogeneity, $\muc(A_j)\le M_{2/\tau}t_j\le2M_{2/\tau}$. 
The number of paired blocks depends only on $\tau$, because $r\le C\varepsilon(\tau)^{-2}$.  Lemma~\ref{lem:finite-gluing} now gives
\[
 \muc(A)
 \le \Cgl\sqrt r\left(\|A\|+\max_j\muc(A_j)\right)
 \le C'_\tau.
\]
All preliminary conjugations were unitary, so this is the desired estimate for the original matrix.
\end{proof}

To pass from a general block matrix to the preceding zero-corner form, we use
the following elementary shear.

\begin{lemma}\label{lem:shear}
Let $\mathcal H$ be a finite-dimensional complex Hilbert space and let
$A=\begin{pmatrix}A_0&X\\Y&D\end{pmatrix}\in\mathcal B(\mathcal H\oplus\mathcal H)$
with $X$ invertible, and put
$S=\begin{pmatrix}I&0\\-X^{-1}A_0&I\end{pmatrix}\in\mathcal B(\mathcal H\oplus\mathcal H)$.
Then
\[
 S^{-1}AS=
 \begin{pmatrix}
 0&X\\
 Y-DX^{-1}A_0&D+X^{-1}A_0X
 \end{pmatrix}.
\]
If $A$ is traceless, so is the lower-right block of $S^{-1}AS$.
\end{lemma}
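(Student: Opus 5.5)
The plan is a direct block computation. First compute $S^{-1}$. Since $S$ is block lower-unitriangular with off-diagonal block $-X^{-1}A_0$, its inverse is
\[
 S^{-1}=\begin{pmatrix}I&0\\X^{-1}A_0&I\end{pmatrix},
\]
as one checks by multiplying the two triangular matrices. Next compute $AS$ block by block:
\[
 AS=\begin{pmatrix}A_0-XX^{-1}A_0&X\\Y-DX^{-1}A_0&D\end{pmatrix}
 =\begin{pmatrix}0&X\\Y-DX^{-1}A_0&D\end{pmatrix}.
\]
The key cancellation is that the upper-left block vanishes, because $XX^{-1}=I$.

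Then multiply on the left by $S^{-1}$. The first block row of $S^{-1}$ is $(I,0)$, so the top row of $AS$ is unchanged, giving the blocks $0$ and $X$. For the bottom row, the lower-left block is
\[
 X^{-1}A_0\cdot 0+(Y-DX^{-1}A_0)=Y-DX^{-1}A_0,
\]
and the lower-right block is
\[
 X^{-1}A_0X+D.
\]
This gives exactly the displayed matrix.

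For the trace statement, similarity preserves trace, so $\Tr(S^{-1}AS)=\Tr A=0$. The upper-left block of $S^{-1}AS$ is zero, so the trace of the lower-right block equals the total trace, which is zero. One can also see this directly: $\Tr(D+X^{-1}A_0X)=\Tr D+\Tr A_0=\Tr A=0$ by cyclicity.

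I expect no real obstacle here. The only care needed is with the order of the noncommuting factors in the lower-left block, $DX^{-1}A_0$ rather than $X^{-1}A_0D$. This is settled by computing $AS$ before applying $S^{-1}$, as above.
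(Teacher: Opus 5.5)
Your proposal is correct and matches the paper's proof essentially step for step: you invert the unitriangular $S$, compute $AS$ first so that the upper-left block cancels, multiply on the left by $S^{-1}$, and obtain tracelessness of the lower-right block from $\Tr(D+X^{-1}A_0X)=\Tr D+\Tr A_0=\Tr A$. Your alternative trace argument, using similarity invariance together with the zero upper-left block, is also valid.
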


\begin{proof}
The inverse of $S$ is
\[
 S^{-1}=\begin{pmatrix}I&0\\X^{-1}A_0&I\end{pmatrix}.
\]
Multiplying first on the right gives
\[
 AS=
 \begin{pmatrix}A_0&X\\Y&D\end{pmatrix}
 \begin{pmatrix}I&0\\-X^{-1}A_0&I\end{pmatrix}
 =\begin{pmatrix}0&X\\Y-DX^{-1}A_0&D\end{pmatrix}.
\]
Multiplying this matrix on the left by $S^{-1}$ yields
\[
 S^{-1}AS=
 \begin{pmatrix}
 0&X\\
 Y-DX^{-1}A_0&D+X^{-1}A_0X
 \end{pmatrix}.
\]
Finally, cyclicity of trace gives $\Tr(D+X^{-1}A_0X)=\Tr D+\Tr A_0=\Tr A$. Thus the new lower-right block is traceless whenever $A$ is.
\end{proof}

The zero-corner estimate and the shear now give the full invertible-link
theorem.

\begin{proof}[Proof of Theorem~\ref{thm:full-link}]
The case $A=0$ is trivial, and by homogeneity we may assume $\|A\|=1$, so that $s_{\min}(X)\ge\tau$.  Apply Lemma~\ref{lem:shear} and write $A'=S^{-1}AS$.  Since
$\|A_0\|\le\|A\|\le1$ and $\|X^{-1}\|\le\tau^{-1}$, $\|X^{-1}A_0\|\le\tau^{-1}$. The triangular formulas for $S$ and $S^{-1}$ therefore give $\|S\|,\|S^{-1}\|\le1+\tau^{-1}$. Set $L_\tau=(1+\tau^{-1})^2$. Then
\[
 \|A'\|\le\|S^{-1}\|\,\|A\|\,\|S\|\le L_\tau.
\]
By the shear formula, the upper-left block of $A'$ is zero, the upper-right block remains $X$, and the lower-right block is traceless.  Since $\|A'\|\le L_\tau$, the upper-right block satisfies $s_{\min}(X)\ge\tau\ge(\tau/L_\tau)\|A'\|$.  Lemma~\ref{lem:graded-corner}, applied with parameter $\tau/L_\tau$, therefore gives
\[
 \muc(A')
 \le C'_{\tau/L_\tau}\|A'\|
 \le L_\tau C'_{\tau/L_\tau}.
\]
Finally, $A=SA'S^{-1}$.  Lemma~\ref{lem:basic-mu}(iii) yields
\[
 \muc(A)
 \le(\|S\|\,\|S^{-1}\|)^2\muc(A')
 \le L_\tau^2\,L_\tau C'_{\tau/L_\tau}.
\]
The right-hand side depends only on $\tau$, and may be denoted by $C_\tau$.  Undoing the normalization gives $\muc(A)\le C_\tau\|A\|$.
\end{proof}

\subsection{From a balanced trace gap to an invertible link}

Theorem~\ref{thm:full-link} has reduced the task to finding invertible
links on a fixed number of traceless compressions.  We now construct these
links from the trace gap.  The trace condition gives an average, while
invertibility requires a lower bound in every direction.  First make the
diagonals of the two blocks equal to their respective normalized traces.
Mixing paired coordinates then puts the trace gap into the diagonal of an
off-diagonal block.  We choose phases to prevent cancellation with the
original off-diagonal entries.  Paving makes the hollow remainder small,
and the paired construction keeps every resulting compression traceless.
Theorem~\ref{thm:full-link} and finite-block gluing finish the argument.

\begin{proof}[Proof of Theorem~\ref{thm:trace-gap}]
The case $A=0$ is trivial, and by homogeneity we may assume $\|A\|=1$, so that $|\alpha|\ge\delta$ for the normalized trace $\alpha$ of $A_0$.  Since $A$ is traceless and the diagonal blocks have the same dimension, the normalized trace of $D$ is $-\alpha$.  Apply Theorem~\ref{thm:hollow}(i) separately to
$A_0-\alpha I_{\mathcal H}$ and $D+\alpha I_{\mathcal H}$.  After conjugating by a block-diagonal unitary matrix, choose an orthonormal basis of each copy of $\mathcal H$, indexed by the same finite set $\mathcal I$, in which
\[
 (A_0)_{ii}=\alpha,
 \qquad
 D_{ii}=-\alpha
 \quad\text{for }i\in\mathcal I.
\]
This operation does not alter the hypotheses or the commutator cost.

Let $Z=\diag(z_i)_{i\in\mathcal I}$ with $|z_i|=1$ and define
\[
 W_Z=\frac1{\sqrt2}
 \begin{pmatrix}I&Z\\-Z^*&I\end{pmatrix}.
\]
Because $ZZ^*=Z^*Z=I$,
\[
 W_Z^*W_Z
 =\frac12
 \begin{pmatrix}I&-Z\\Z^*&I\end{pmatrix}
 \begin{pmatrix}I&Z\\-Z^*&I\end{pmatrix}
 =I,
\]
so $W_Z$ is unitary.  Write
\[
 W_Z^*AW_Z=
 \begin{pmatrix}\widetilde A_0&\widetilde X\\\widetilde Y&\widetilde D\end{pmatrix}.
\]
Multiplying the three block matrices shows that
\[
 \widetilde X=\frac12\bigl(A_0Z+X-ZYZ-ZD\bigr).
\]
If $x_i=X_{ii}$ and $y_i=Y_{ii}$, its $i$th diagonal entry satisfies
\[
 2\widetilde X_{ii}=2\alpha z_i+x_i-y_i z_i^2.
\]
For fixed $i$, define
\[
 g_i(z)=2\alpha z+x_i-y_i z^2
 \quad\text{for }z\in\mathbb T.
\]
The functions $1,z,z^2$ are orthogonal in $L_2(\mathbb T)$, so
\[
 \int_{\mathbb T}|g_i(z)|^2\,dm(z)
 =4|\alpha|^2+|x_i|^2+|y_i|^2
 \ge4|\alpha|^2.
\]
The supremum of $|g_i|$ is at least its $L_2$ norm.  We may therefore choose each phase $z_i$ independently so that
$|g_i(z_i)|\ge2|\alpha|$.  With this choice,
\[
 |\widetilde X_{ii}|\ge|\alpha|\ge\delta
 \quad\text{for }i\in\mathcal I.
\]

Let $G=\diag(\widetilde X_{ii})_{i\in\mathcal I}$ and $R=\widetilde X-G$.  Then $R$ is hollow and
$s_{\min}(G)=\min_i|\widetilde X_{ii}|\ge\delta$.  Since $\widetilde X$ is a corner of the contraction $W_Z^*AW_Z$, $\|\widetilde X\|\le1$; also $\|G\|\le1$, and hence $\|R\|\le2$.
Put $r=\lceil 5184\delta^{-2}\rceil$ so that $\tfrac{36}{\sqrt r}\le\tfrac\delta2$. Since $\|R\|\le 2$, Corollary~\ref{thm:paving} supplies a partition
$\mathcal I=S_1\sqcup\cdots\sqcup S_r$ for which $\|P_{S_j}RP_{S_j}\|\le \frac{18}{\sqrt{r}}\|R\|\le \tfrac\delta2$. For every unit vector $v$ in $P_{S_j}\mathcal H$,
\[
 \|(P_{S_j}GP_{S_j}+P_{S_j}RP_{S_j})v\|
 \ge\|P_{S_j}GP_{S_j}v\|-\|P_{S_j}RP_{S_j}\|.
\]
Taking the infimum over $v$ gives
\[
 s_{\min}(P_{S_j}\widetilde XP_{S_j})
 \ge s_{\min}(P_{S_j}GP_{S_j})-
       \|P_{S_j}RP_{S_j}\|
 \ge\frac\delta2.
\]

Let $E_j=P_{S_j}\oplus P_{S_j}$.  Since both $P_{S_j}$ and $Z$ are diagonal, $E_j$ commutes with $W_Z$.  Consequently,
\[
 \begin{aligned}
 \Tr(E_jW_Z^*AW_ZE_j)
 &=\Tr(E_jW_Z^*AW_Z)\\
 &=\Tr(W_ZE_jW_Z^*A)\\
 &=\Tr(E_jA)\\
 &=\Tr(P_{S_j}A_0)+\Tr(P_{S_j}D)\\
 &=|S_j|(\alpha-\alpha)=0.
 \end{aligned}
\]
Thus every diagonal compression $E_jW_Z^*AW_ZE_j$ belongs to
$\mathcal B(P_{S_j}\mathcal H\oplus P_{S_j}\mathcal H)$, is traceless with norm at most one, and has an upper-right block whose least singular value is at least $\delta/2$.  Theorem~\ref{thm:full-link} bounds its commutator cost by $C_{\delta/2}$.  The number $r$ depends only on $\delta$, so Lemma~\ref{lem:finite-gluing} gives
\[
 \muc(W_Z^*AW_Z)
 \le \Cgl\sqrt r\bigl(1+C_{\delta/2}\bigr)
 =:C_\delta.
\]
Unitary invariance of $\muc$ completes the proof.
\end{proof}
\section{Approximate-rank rigidity}\label{sec:rank}

The preceding section gives a bound whenever an even-dimensional matrix
has a substantial trace gap.  We now show why a possible counterexample
must have enough spectral mass to produce such a gap.  If a matrix is
close to one of small rank, simultaneous hollowization and vector
partitioning give smaller traceless compressions whose norm reduction
compensates for the gluing cost from Section~\ref{sec:gluing}.  Under the
bound in smaller dimensions, such a matrix cannot have large commutator
cost.  A possible counterexample must therefore have large approximate
rank.  The resulting lower bound on its Hilbert--Schmidt norm yields, in
even dimension, the trace imbalance required by
Theorem~\ref{thm:trace-gap}.  This is the global dichotomy used in the final
proof.

The low-rank argument combines Damm--Fa\ss bender simultaneous
hollowization \cite{DammFassbender} with
Marcus--Spielman--Srivastava vector partitioning \cite{MSS}.  We apply
vector partitioning to the support projection of a low-rank approximation.
Small normalized rank gives compression norms of order
$1/r$ once the parameters are fixed appropriately, enough to overcome the
$\sqrt r$ gluing cost.  This is where the approximate-rank hypothesis
improves on the general paving estimate.

For $A\in\Mn$ and $\eps>0$, define
\[
 \rank_\eps(A)=\min\{\rank F:F\in\Mn,\ \norm{A-F}\le\eps\norm{A}\}.
\]
Thus $\rank_\eps(A)$ is the smallest rank of a matrix approximating $A$ to
relative error at most $\eps$ in operator norm.

\begin{theorem}[Approximate-rank exclusion]\label{thm:rank-exclusion}
There are absolute positive constants $\eps_0,\delta_0$ and $K_0$ with the following property.  Let $K\ge K_0$ and let $A\in \Mn$ be traceless.  Assume
\begin{enumerate}[label=\textup{(\alph*)},leftmargin=2.2em]
\item $\muc(D)\le K\norm{D}$ for every traceless $D\in M_m(\mathbb C)$ with $1\le m<n$, and
\item $\muc(A)>\tfrac34K\norm{A}$.
\end{enumerate}
Then $\rank_{\eps_0}(A)>\delta_0n$.  Consequently,
$\norm{A}_2^2\ge c_0n\norm{A}^2$ where $c_0=\tfrac12\delta_0\eps_0^2$.
\end{theorem}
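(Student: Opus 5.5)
We argue by contraposition: assuming $\rank_{\eps_0}(A)\le\delta_0 n$, we build traceless diagonal compressions of $A$ whose norms are of order $1/r$ and glue them with Lemma~\ref{lem:finite-gluing} to contradict (b). Normalize $\|A\|=1$ and write $A=F+R$ with $\rank F\le\delta_0 n$ and $\|R\|\le\eps_0$. Let $P$ be the orthogonal projection onto $\ran F+\ran F^*$. Then $F=PFP$, $\|F\|\le 1+\eps_0\le2$, and $\Tr P\le 2\delta_0 n$. Apply Theorem~\ref{thm:hollow}(ii) with $H_1=\operatorname{Re}A$, $H_2=\operatorname{Im}A$ and $H_3=P-\tau(P)I$. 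In the resulting basis $A$ is hollow and $P_{ii}=\tau(P)\le 2\delta_0$ for $i\le n-2$. The two exceptional coordinates $n-1,n$ are set aside as singleton classes; their compressions are the $1\times1$ zero matrices $A_{ii}=0$, which have cost zero.

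On the remaining index set $[n-2]$ we use the vectors $u_i=Pe_i$. They satisfy $\sum_i u_iu_i^*\preceq P$ (a contraction) and $\|u_i\|^2=P_{ii}\le2\delta_0$. Apply Marcus--Spielman--Srivastava \cite[Corollary~1.5]{MSS} with $r$ classes. This yields a partition $S_1,\ldots,S_r$ with
\[
\Bigl\|\sum_{i\in S_j}u_iu_i^*\Bigr\|\le\Bigl(\tfrac1{\sqrt r}+\sqrt{2\delta_0}\Bigr)^2 .
\]
Since $\sum_{i\in S_j}u_iu_i^*=P P_{S_j}P$, this says $\|P_{S_j}P\|^2\le(r^{-1/2}+\sqrt{2\delta_0})^2$. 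Choosing $\delta_0\le 1/(2r)$ gives $\|P_{S_j}P\|\le 2r^{-1/2}$. Hence
\[
\|P_{S_j}FP_{S_j}\|=\|P_{S_j}P\,F\,PP_{S_j}\|\le 2\cdot 4/r .
\]
This is the crucial quadratic gain, which paving alone cannot give. Consequently $\|A_{jj}\|\le 8/r+\eps_0$. Each $A_{jj}$ is traceless because $A$ is hollow, and each has dimension $<n$ (we may assume $n$ is large; otherwise the statement is trivial, or at least vacuous once $K_0$ is large). Hypothesis (a) then gives $\muc(A_{jj})\le K(8/r+\eps_0)$.

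Lemma~\ref{lem:finite-gluing} with $s=r+2$ blocks yields
\[
\muc(A)\le \Cgl\sqrt{r+2}\,\bigl(1+K(8/r+\eps_0)\bigr).
\]
Fix $r$ absolute so large that $8\Cgl\sqrt{r+2}/r\le 1/4$. Then take $\eps_0$ with $\Cgl\sqrt{r+2}\,\eps_0\le1/4$ and $\delta_0=1/(2r)$. The bound becomes $\muc(A)\le \Cgl\sqrt{r+2}+K/2$, which is at most $3K/4$ once $K\ge K_0:=4\Cgl\sqrt{r+2}$. This contradicts (b).

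For the Hilbert--Schmidt consequence, let $s_1\ge s_2\ge\cdots$ be the singular values of $A$. Truncated SVD shows $\rank_{\eps_0}(A)\le\#\{k:s_k>\eps_0\}$. Therefore more than $\delta_0 n$ singular values exceed $\eps_0$, so $\|A\|_2^2\ge\delta_0 n\eps_0^2\ge c_0 n$.

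The main obstacle is the vector-partitioning step. We must check that the MSS hypotheses apply with the right normalization: the sum $\sum u_iu_i^*$ is bounded by a contraction rather than equal to the identity, and the bound is stated with an arbitrary prescribed number of classes. If needed, we pad with vectors spanning $(I-P)$-type directions, or use the standard isotropic reduction, so that the form $\bigl(r^{-1/2}+\sqrt{\max\|u_i\|^2}\bigr)^2$ applies. We also need $P_{ii}$ small on the non-exceptional coordinates, which is exactly what the third Hermitian matrix in Damm--Fa\ss bender supplies.
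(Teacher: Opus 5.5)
Your proposal is correct and follows the paper's proof essentially step for step. The shared steps are: Damm--Fa\ss bender hollowization with $P-\tau(P)I$ as the third matrix, Marcus--Spielman--Srivastava partitioning of $u_i=Pe_i$, the identity $F=PFP$ turning $\|PP_{S_j}P\|=O(1/r)$ into $\|P_{S_j}FP_{S_j}\|=O(1/r)$, two zero singleton blocks, and gluing with at most $r+2$ blocks; only the bookkeeping of constants differs. The padding you flag needs to be stated precisely: on $\ran P$ the deficit $I_{\ran P}-\sum_{i\le n-2}u_iu_i^*$ equals $PP_{\{n-1,n\}}P$, so padding in $(I-P)$-directions alone would not fill it, and this deficit is what must be split into pieces of squared norm at most $2\delta_0$. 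The paper does this eigendirection by eigendirection in Lemma~\ref{lem:frame-tools} and then restricts the partition to $[n-2]$. Also, you do not need $n$ to be large, because every $S_j\subseteq[n-2]$ already has fewer than $n$ elements.
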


The final proof will combine the exclusion theorem with the following
consequence of the balanced trace-gap theorem.  The exclusion theorem
uses the bound in smaller dimensions to force large stable rank.  The
corollary requires no inductive hypothesis: in even dimension, a stable
rank $\|A\|_2^2/\|A\|^2$ proportional to the dimension already gives a
uniform commutator bound.

\begin{corollary}\label{cor:stable-easy}
Let $\sigma>0$ and let $A\in \Mn$ be traceless of even dimension.  If
$\norm{A}_2^2\ge\sigma n\norm{A}^2$,
then
\[
 \muc(A)\le C_{\sigma/4}\norm{A},
\]
where $C_\delta$ is the constant of Theorem~\textup{\ref{thm:trace-gap}}.
\end{corollary}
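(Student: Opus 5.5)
We reduce the corollary to Theorem~\ref{thm:trace-gap} by producing an orthogonal splitting $\C^n=\mathcal H\oplus\mathcal H'$ into two halves of dimension $n/2$ on which the upper-left compression has a large trace. Normalize $\|A\|=1$, so $\|A\|_2^2\ge\sigma n$. Write $A=H+iJ$ with $H=\operatorname{Re}A$ and $J=\operatorname{Im}A$ Hermitian contractions, both traceless. Since $\|A\|_2^2=\|H\|_2^2+\|J\|_2^2$, one of them, say $H$ (after replacing $A$ by $-iA$ if necessary, which changes neither $\muc$ nor $\|A\|$ by Lemma~\ref{lem:basic-mu}(i)), satisfies $\|H\|_2^2\ge\sigma n/2$. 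Because $\|H\|\le1$, each eigenvalue satisfies $\lambda_j^2\le|\lambda_j|$, hence $\|H\|_1\ge\|H\|_2^2\ge\sigma n/2$.

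Next order the eigenvalues $\lambda_1\ge\cdots\ge\lambda_n$ of $H$ and let $P$ project onto the span of eigenvectors for $\lambda_1,\ldots,\lambda_{n/2}$. We claim $\Tr(PH)\ge\|H\|_1/4$. Write $s_+$ and $s_-$ for the sums of the positive and negative eigenvalues; tracelessness gives $s_+=-s_-=\|H\|_1/2$. If at most $n/2$ eigenvalues are positive, then $P$ captures all of them, so $\Tr(PH)\ge s_+$. Otherwise all top $n/2$ eigenvalues are nonnegative, and the top half of any list carries at least half of the total sum of its positive part; thus $\Tr(PH)\ge s_+/2=\|H\|_1/4$. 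Hence $\Tr(PH)\ge\sigma n/8$. Since $\Tr(PA)=\Tr(PH)+i\Tr(PJ)$ with $\Tr(PJ)$ real, we get $|\Tr(PA)|\ge\Tr(PH)\ge\sigma n/8$.

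Finally take $\mathcal H=\ran P$, identify $\ran(I-P)$ with $\mathcal H$ by any unitary (the dimensions are equal since $n$ is even), and write $A$ in block form. Then $A_0=PA|_{\mathcal H}$ has $|\Tr A_0|\ge(\sigma/8)n=(\sigma/4)\dim\mathcal H\,\|A\|$. Theorem~\ref{thm:trace-gap} with $\delta=\sigma/4$, together with unitary invariance (Lemma~\ref{lem:basic-mu}(ii)), gives $\muc(A)\le C_{\sigma/4}\|A\|$.

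The only delicate point is the constant bookkeeping in the top-half eigenvalue estimate. We need $\Tr(PH)\ge\|H\|_1/4$, which yields exactly $\delta=\sigma/4$ relative to $\dim\mathcal H=n/2$. The averaging argument above gives this, since the largest $n/2$ of $n$ numbers sum to at least half of the sum of their positive parts whenever more than $n/2$ of them are positive.
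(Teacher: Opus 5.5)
Your overall route is the paper's: the corollary is proved there by Lemma~\ref{lem:trace-polarization} (a rank-$n/2$ spectral projection of $\operatorname{Re}A$ or $\operatorname{Im}A$ with $|\Tr(PA)|\ge\sigma n/8$), followed by Theorem~\ref{thm:trace-gap} with $\delta=\sigma/4$. Your way of getting $\|H\|_1\ge\sigma n/2$ differs slightly but is equally valid. You use the identity $\|A\|_2^2=\|H\|_2^2+\|J\|_2^2$ and then $\|H\|_1\ge\|H\|_2^2$ for a contraction; the paper uses $\|A\|_1\ge\|A\|_2^2$ and then the trace-norm triangle inequality.

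However, the top-half eigenvalue step contains a false claim. In the case where at most $n/2$ eigenvalues are positive, you assert $\Tr(PH)\ge s_+$ because $P$ captures all positive eigenvalues. But $P$ has rank $n/2$, so it also captures $n/2-p$ nonpositive eigenvalues, and these lower the trace. For instance, $H=\diag(1,-\tfrac13,-\tfrac13,-\tfrac13)$ gives $\Tr(PH)=\tfrac23<1=s_+$. More generally, one eigenvalue $1$ and $n-1$ eigenvalues $-1/(n-1)$ give $\Tr(PH)=\frac{n/2}{n-1}$, which is arbitrarily close to $s_+/2$. So this is the case where the factor $1/2$ is nearly lost, not the trivial one. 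Your closing remark places the delicate point in the wrong case.

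The needed inequality $\Tr(PH)\ge s_+/2$ is still true, but it requires using tracelessness on the negative side. The $n-p$ nonpositive eigenvalues have total absolute value $s_+$. $P$ picks up the $n/2-p$ of them with the smallest absolute values, so these carry at most the fraction $\frac{n/2-p}{n-p}\le\frac12$ of $s_+$. Hence $\Tr(PH)\ge s_+-\tfrac12 s_+=\tfrac14\|H\|_1$, which is exactly the paper's argument. With this repair, the rest of your proof goes through as written: the bound $|\Tr(PA)|\ge\Tr(PH)\ge\sigma n/8$, the identification of the two halves, and the application of Theorem~\ref{thm:trace-gap} with $\delta=\sigma/4$.
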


\subsection{Vector partitioning and proof of approximate-rank rigidity}

To prove Theorem~\ref{thm:rank-exclusion}, we assume that a low-rank
approximation exists and construct a partition whose gluing bound
contradicts the assumed lower bound on $\muc(A)$.
Write $A=F+R$, where $F$ has low rank and $R$ has small norm, and let
$P$ project onto $\ran F+\ran F^*$.  The reason to partition this support
projection is the identity $F=PFP$: for every coordinate projection $P_S$,
\[
 \|P_SFP_S\|\le\|F\|\,\|P_SP\|^2
 =\|F\|\,\|P_SPP_S\|.
\]
Thus a small compression of $P$ gives a small compression of $F$ with the
same norm reduction.  Damm and Fa\ss bender's theorem makes $A$ hollow
while spreading the diagonal of $P$ evenly outside two coordinates.
The vector partition theorem below then controls the compressions of $P$.

The gain is stronger than general paving provides: when the diagonal of
$P$ is sufficiently small, the bound is close to $1/r$.  This offsets the
$\sqrt r$ cost of gluing.  The two exceptional coordinates are isolated as
singleton blocks, on which the hollow matrix $A$ vanishes.  The precise
partition theorem is Corollary~1.5 of Marcus, Spielman, and Srivastava
\cite{MSS}.

\begin{theorem}[Marcus--Spielman--Srivastava \cite{MSS}]\label{thm:mss}
Let $r$ be a positive integer, let $\delta>0$, and let $u_1,\ldots,u_m\in\C^d$ be vectors such that
\[
 \sum_{i=1}^m u_i u_i^*=I_d,
\]
and $\norm{u_i}^2\le\delta$ for all $i$.
Then there exists a partition
$\{S_1,\ldots,S_r\}$ of $[m]$
such that
\[
 \left\|\sum_{i\in S_j}u_i u_i^*\right\|
 \le\left(\frac1{\sqrt r}+\sqrt\delta\right)^2
 \quad \text{for } j=1,\ldots,r.
\]
\end{theorem}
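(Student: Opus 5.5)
The plan is to deduce the statement from the main probabilistic theorem of Marcus, Spielman, and Srivastava \cite[Theorem~1.4]{MSS}, which is the result their Corollary~1.5 rests on. That theorem reads as follows. Let $v_1,\ldots,v_m$ be independent random vectors in $\C^N$ with finite support. Suppose $\sum_i\mathbb E\,v_iv_i^*=I_N$ and $\mathbb E\|v_i\|^2\le\epsilon$ for every $i$. Then, with positive probability,
\[
 \Bigl\|\sum_{i=1}^m v_iv_i^*\Bigr\|\le(1+\sqrt\epsilon)^2 .
\]
The partition statement follows by a lifting trick that encodes the choice of a class $S_j$ as a choice of block.

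\emph{Step 1: lift the vectors.} Work in $\C^r\otimes\C^d\cong\C^{rd}$, with $e_1,\ldots,e_r$ the standard basis of $\C^r$. For each $i\in[m]$, independently, let $v_i$ take each of the values $\sqrt r\,e_j\otimes u_i$, for $j=1,\ldots,r$, with probability $1/r$.

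\emph{Step 2: check the hypotheses.} A direct computation gives
\[
 \mathbb E\,v_iv_i^*=\sum_{j=1}^r (e_je_j^*)\otimes u_iu_i^*=I_r\otimes u_iu_i^*.
\]
Summing over $i$ and using $\sum_iu_iu_i^*=I_d$ gives $I_{rd}$. Moreover $\|v_i\|^2=r\|u_i\|^2\le r\delta$ deterministically, so the theorem applies with $\epsilon=r\delta$. It yields a realization $j(i)\in[r]$, $i\in[m]$, with
\[
 \Bigl\|\sum_i v_iv_i^*\Bigr\|\le(1+\sqrt{r\delta})^2 .
\]

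\emph{Step 3: read off the partition.} Set $S_j=\{i:j(i)=j\}$. Then
\[
 \sum_i v_iv_i^*=\bigoplus_{j=1}^r\, r\sum_{i\in S_j}u_iu_i^*
\]
is block diagonal, so its norm is the maximum of the block norms. Dividing the bound from Step 2 by $r$ gives
\[
 \Bigl\|\sum_{i\in S_j}u_iu_i^*\Bigr\|\le\frac{(1+\sqrt{r\delta})^2}{r}=\Bigl(\frac1{\sqrt r}+\sqrt\delta\Bigr)^2
\]
for every $j$. Empty classes are allowed, consistent with the paper's convention.

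\emph{Main obstacle.} The real difficulty lies entirely inside Theorem~1.4, which I would cite rather than reprove. If it had to be proved, the route is the method of interlacing families. First show that the expected characteristic polynomial of $\sum_iv_iv_i^*$ equals the mixed characteristic polynomial
\[
 \prod_i\bigl(1-\partial_{z_i}\bigr)\det\Bigl(xI+\sum_i z_iA_i\Bigr)\Big|_{z=0},\qquad A_i=\mathbb E\,v_iv_i^*.
\]
This polynomial is real stable. The polynomials obtained by conditioning on the values of $v_1,\ldots,v_k$ then form an interlacing family, so some realization has largest root at most the largest root of the mixed polynomial. Finally, a multivariate barrier-function argument shows that each operator $1-\partial_{z_i}$ moves the barrier by a controlled amount. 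This bounds that root by $(1+\sqrt\epsilon)^2$, and this barrier estimate is the delicate step.
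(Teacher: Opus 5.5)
Your proposal is correct and takes essentially the same approach: the paper gives no proof of its own and simply cites Corollary~1.5 of Marcus--Spielman--Srivastava. Your derivation is exactly theirs — lift each $u_i$ to the $r$ equally likely values $\sqrt r\,e_j\otimes u_i$, apply their Theorem~1.4 with $\epsilon=r\delta$, and read the partition off the block-diagonal structure, dividing by $r$.
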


Deleting the exceptional coordinates gives a family whose rank-one
operators sum to at most the identity, while the partition theorem assumes
equality.  The following elementary completion supplies that equality by
splitting the spectral decomposition of the deficit into small pieces.

\begin{lemma}\label{lem:frame-tools}
Let $\delta>0$ and let $u_1,\ldots,u_m\in\C^d$ be vectors such that
$\sum_{i=1}^m u_i u_i^*\preceq I_d$ and $\|u_i\|^2\le\delta$ for
$i\in [m]$.  Then there are $q=d\lceil\delta^{-1}\rceil$ vectors
$v_1,\ldots,v_q\in\C^d$ with $\|v_j\|^2\le\delta$ for all $j\in [q]$ such
that
\[
 \sum_{i=1}^mu_iu_i^*+\sum_{j=1}^qv_jv_j^*=I_d.
\]
\end{lemma}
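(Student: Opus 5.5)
Let $\Delta=I_d-\sum_{i=1}^m u_iu_i^*$, which is positive semidefinite by hypothesis. The plan is to fill this deficit with small rank-one pieces taken from its spectral decomposition. Diagonalize it as
\[
 \Delta=\sum_{k=1}^d\lambda_k w_kw_k^*,
\]
where $(w_k)$ is an orthonormal basis of $\C^d$ and $\lambda_k\ge0$. Since $\Delta\preceq I_d$, each eigenvalue satisfies $\lambda_k\le1$.

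Put $N=\lceil\delta^{-1}\rceil$. For each $k$, split the eigenvalue into $N$ equal parts by setting
\[
 v_{k,1}=\cdots=v_{k,N}=\sqrt{\lambda_k/N}\,w_k .
\]
Each vector then has squared norm $\lambda_k/N\le1/N\le\delta$. Their rank-one operators sum to $\lambda_kw_kw_k^*$, so summing over $k$ recovers $\Delta$. Altogether there are $dN=q$ vectors. After relabeling them as $v_1,\ldots,v_q$, we get
\[
 \sum_iu_iu_i^*+\sum_jv_jv_j^*=I_d,
\]
with every $\|v_j\|^2\le\delta$, as required.

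There is no real obstacle. The only points to check are that $\Delta\preceq I_d$, which holds because $\sum_iu_iu_i^*\succeq0$, so each $\lambda_k\le1$, and that $1/N\le\delta$, which holds by the choice of $N$. Zero eigenvalues give zero vectors, which are allowed, so the count stays exactly $q$.
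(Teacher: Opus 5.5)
Your proof is correct and matches the paper's argument: both diagonalize the deficit $I_d-\sum_i u_iu_i^*$, whose eigenvalues lie in $[0,1]$, and split each eigencomponent into $\lceil\delta^{-1}\rceil$ equal rank-one pieces of squared norm at most $\delta$, giving exactly $q$ vectors. Your remark that zero eigenvalues contribute zero vectors, so the count stays exactly $q$, is a nice touch that the paper leaves implicit.
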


\begin{proof}
Put $R=I_d-\sum_{i=1}^mu_iu_i^*$.  Then $0\preceq R\preceq I_d$.  Let
$f_1,\ldots,f_d$ be an orthonormal basis of $\C^d$ consisting of eigenvectors
of $R$, and write $R=\sum_{k=1}^d\lambda_kf_kf_k^*$ with
$0\le\lambda_k\le 1$.  Put $L=\lceil\delta^{-1}\rceil$, and for $k\in[d]$
and $\ell\in[L]$ set $v_{k,\ell}=\sqrt{\lambda_k/L}\,f_k$.  Then
$\|v_{k,\ell}\|^2=\lambda_k/L\le\delta$ and
$\sum_{\ell=1}^Lv_{k,\ell}v_{k,\ell}^*=\lambda_kf_kf_k^*$.  Summing over
$k$ gives $\sum_{k,\ell}v_{k,\ell}v_{k,\ell}^*=R$, so the $dL=q$ vectors
$v_{k,\ell}$, relabeled as $v_1,\ldots,v_q$, have the required properties.
\end{proof}

The second preliminary fact converts an approximate-rank lower bound into a
Hilbert--Schmidt lower bound.

\begin{lemma}\label{lem:approx-rank-sv}
For any $A\in \Mn$ and $\eps>0$,
\[
  \norm{A}_2^2\ge\eps^2 \rank_{\eps}(A)\norm{A}^2.
\]
\end{lemma}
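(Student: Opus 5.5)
The plan is to use the singular value decomposition together with the Eckart--Young characterization of best low-rank approximation in operator norm. Let $s_1\ge s_2\ge\cdots\ge s_n\ge0$ be the singular values of $A$, so that $s_1=\|A\|$.

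If $A=0$ the inequality is trivial: $\rank_\eps(0)=0$, because $F=0$ is admissible. Assume $A\ne0$ and put $k=\rank_\eps(A)$. If $k=0$ there is nothing to prove. Otherwise, truncating the singular value decomposition after $k-1$ terms gives a matrix $F$ of rank at most $k-1$ with $\|A-F\|=s_k$. By the minimality of $k$, this $F$ is not admissible, so
\[
 s_k>\eps\|A\|.
\]
Only the easy direction of Eckart--Young is needed here: we exhibit the truncation explicitly and compute its error, so no optimality statement is required.

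Monotonicity of the singular values then gives $s_i\ge s_k>\eps\|A\|$ for every $i\le k$. Summing over these indices,
\[
 \|A\|_2^2=\sum_{i=1}^n s_i^2\ge\sum_{i=1}^k s_i^2\ge k\,\eps^2\|A\|^2,
\]
which is the claimed inequality. There is no real obstacle in this argument; the only point needing care is the convention for $k=0$ and the strictness of the inequality coming from minimality, and neither affects the conclusion.
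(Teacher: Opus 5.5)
Your proposal is correct and follows the paper's proof essentially line for line. Like the paper, you truncate the singular value decomposition after $k-1$ terms, use minimality of $k=\rank_\eps(A)$ to get $s_k>\eps\|A\|$, and then sum the top $k$ squared singular values.
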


\begin{proof}
Let $k=\rank_{\eps}(A)$. If $k=0$, there is nothing to prove, so assume $k\ge 1$. Let
$A=\sum_{j=1}^ns_j(A)\,u_jv_j^*$
be a singular-value decomposition of $A$ with $s_1(A)\ge\cdots\ge s_n(A)$.  The truncated sum
$A_{k-1}=\sum_{j=1}^{k-1}s_j(A)u_jv_j^*$ has rank at most $k-1$ and
$\|A-A_{k-1}\|=s_{k}(A)$. Since 
$\rank_{\eps}(A)=k$, no matrix of rank at most $k-1$ lies within distance $\eps
\norm{A}$ of $A$, and hence $s_{k}(A)>\eps
\norm{A}$.
Therefore
\[
\norm{A}_2^2=\sum_{j=1}^n s_j(A)^2 \ge k \, s_k(A)^2 >k \eps^2\norm{A}^2. \qedhere
\]
\end{proof}

We now combine hollowization, vector partitioning, and finite-block gluing to
prove the exclusion theorem.

\begin{proof}[Proof of Theorem~\ref{thm:rank-exclusion}]
Hypothesis (b) forces $A\neq0$, and both that hypothesis and the conclusion are unchanged when $A$ is multiplied by a nonzero scalar, so we may assume $\norm{A}=1$.  Let $\Cgl$ be the constant in Lemma~\ref{lem:finite-gluing}.  We will choose a fixed integer $r_0$ and then the constants $\delta_0,\varepsilon_0,K_0$, in that order.

Suppose, toward a contradiction, that $\rank_{\varepsilon_0}(A)\le\delta_0n$.  By the definition of approximate rank, there are matrices $F,R\in\Mn$ such that
\[
 A=F+R,
 \qquad
 \rank F\le\delta_0n,
 \qquad
 \|R\|\le\varepsilon_0.
\]
Let $P$ be the orthogonal projection onto $\ran F+\ran F^*$.  Since $\ran F\subseteq\ran P$, we have $PF=F$.  Since $\ran F^*\subseteq\ran P$, taking adjoints gives $FP=F$.  Hence $F=PFP$.  Moreover,
\[
 \rank P\le\rank F+\rank F^*\le2\delta_0n,
 \qquad
 \|F\|\le\|A\|+\|R\|\le1+\varepsilon_0.
\]
If $P=0$, then $F=0$ and $1=\|A\|\le\varepsilon_0$, which will be excluded by our choice of $\varepsilon_0$.  Thus $P\neq0$.  Since $\tau(P)=\tfrac{\rank P}{n}\le2\delta_0$, Theorem~\ref{thm:hollow}(ii) applies to the three traceless Hermitian matrices
\[
 \operatorname{Re}A,
 \qquad
 \operatorname{Im}A,
 \qquad
 P-\tau(P)I.
\]
Conjugate $A,F,R,P$ simultaneously by the resulting unitary matrix.  All norm, rank, and support relations are preserved.  The first two transformed matrices are hollow, hence $A$ is hollow.  After reordering coordinates, the third conclusion says
\[
 \langle Pe_i,e_i\rangle=\tau(P)
 \quad\text{for }i\in [n-2].
\]
For $i\in [n-2]$, let $u_i=Pe_i\in\ran P$.  Then $\|u_i\|^2=\tau(P)\le2\delta_0$, and on $\ran P$,
\[
 \sum_{i=1}^{n-2}u_iu_i^*
 =P P_{\{1,\ldots,n-2\}}P
 \preceq P|_{\ran P}=I_{\ran P}.
\]
Lemma~\ref{lem:frame-tools}, applied in $\ran P$ with $\delta=2\delta_0$,
supplies $q=\rank P\,\lceil(2\delta_0)^{-1}\rceil$ vectors
$v_1,\ldots,v_q\in\ran P$ of squared norm at most $2\delta_0$ such that
$u_1,\ldots,u_{n-2},v_1,\ldots,v_q$ is a Parseval frame of $\ran P$.

Index this frame as $w_1,\ldots,w_{n-2+q}$, with $w_i=u_i$ for
$i=1,\ldots,n-2$ and $w_{n-2+j}=v_j$ for $j=1,\ldots,q$.
Theorem~\ref{thm:mss} with $r_0$ classes gives a partition
$\{T_1,\ldots,T_{r_0}\}$ of $[n-2+q]$ such that
\[
 \Bigl\|\sum_{i\in T_j}w_iw_i^*\Bigr\|
 \le\left(\frac1{\sqrt{r_0}}+\sqrt{2\delta_0}\right)^2=:\beta
 \quad\text{for }j=1,\ldots,r_0.
\]
Put $S_j=T_j\cap[n-2]$.  Then $\{S_1,\ldots,S_{r_0}\}$ is a partition of $[n-2]$, some of whose classes may be empty.  Since $0\preceq\sum_{i\in S_j}u_iu_i^*\preceq\sum_{i\in T_j}w_iw_i^*$,
\[
 \|PP_{S_j}P\|
 =\Bigl\|\sum_{i\in S_j}u_iu_i^*\Bigr\|
 \le\beta
 \quad\text{for }j=1,\ldots,r_0.
\]
Since
$\|P_{S_j}P\|^2
 =\|(P_{S_j}P)^*(P_{S_j}P)\|
 =\|PP_{S_j}P\|$,
we have $\|P_{S_j}P\|^2\le\beta$.  Together with the identity $F=PFP$, we obtain
\[
 \begin{aligned}
 \|P_{S_j}FP_{S_j}\|
 &=\|(P_{S_j}P)F(PP_{S_j})\|\\
 &\le\|P_{S_j}P\|^2\|F\|\\
 &\le(1+\varepsilon_0)\beta.
 \end{aligned}
\]
The remainder contributes at most $\varepsilon_0$, and therefore
\[
 \|P_{S_j}AP_{S_j}\|
 \le(1+\varepsilon_0)\beta+\varepsilon_0
 =:\eta.
\]

Use the sets $S_1,\ldots,S_{r_0}$ and the two exceptional singleton coordinates as an orthogonal block partition, discarding empty sets.  Since $A$ is hollow, every coordinate compression is traceless.  In particular, both singleton compressions vanish.  Every nonempty $S_j$ has size at most $n-2$, so by hypothesis (a),
\[
 \muc(P_{S_j}AP_{S_j})
 \le K\|P_{S_j}AP_{S_j}\|
 \le K\eta.
\]
Lemma~\ref{lem:finite-gluing}, applied to at most $r_0+2$ blocks, gives $\muc(A)\le\Cgl\sqrt{r_0+2}\,(1+K\eta)$.

We now make the choices explicit.  First choose $r_0$ so large that
$\frac{2\Cgl\sqrt{r_0+2}}{r_0}\le\frac14$.
Next choose $\delta_0>0$ sufficiently small that $2\delta_0<1$ and
$\left(\frac1{\sqrt{r_0}}+\sqrt{2\delta_0}\right)^2
 \le\frac{3}{2r_0}$.
Finally choose $0<\varepsilon_0<1/2$ so small that
$(1+\varepsilon_0)\frac{3}{2r_0}+\varepsilon_0
 \le\frac2{r_0}$.
For these choices, $\eta\le2/r_0$, and hence
$\Cgl\sqrt{r_0+2}\,\eta\le\frac14$.
Choose
$K_0=4\Cgl\sqrt{r_0+2}$.
If $K\ge K_0$, then the constant term and the coefficient of $K$ in the gluing estimate are both at most $K/4$.  Thus
\[
 \muc(A)\le K/4+K/4=K/2,
\]
contradicting $\muc(A)>3K/4$.  This proves
$\rank_{\varepsilon_0}(A)>\delta_0n$. Lemma~\ref{lem:approx-rank-sv} thus implies 
\[
\|A\|_2^2\ge\varepsilon_0^2\rank_{\varepsilon_0}(A)>\delta_0\varepsilon_0^2n. \qedhere
\]
\end{proof}

\subsection{Stable rank and trace polarization}

The approximate-rank argument has supplied a Hilbert--Schmidt lower bound.
To apply Section~\ref{sec:links}, we still need a trace gap between two
subspaces of equal dimension.  The next lemma makes this conversion for
any traceless matrix of even dimension satisfying the lower bound.
At least one of $\operatorname{Re}A$ and $\operatorname{Im}A$ has large
trace norm.  Since that Hermitian matrix is traceless, its positive and
negative spectral masses are equal.  Selecting its largest half of the
eigenvalues captures a fixed proportion of the positive mass and gives the
trace gap needed in Theorem~\ref{thm:trace-gap}.

\begin{lemma}\label{lem:trace-polarization}
Let $\sigma>0$, let $n$ be even, and let $A\in \Mn$ be traceless with
\[
 \norm{A}_2^2\ge\sigma n\norm{A}^2.
\]
Then there is an orthogonal projection $P\in\Mn$ of rank $n/2$ such that
\[
 \abs{\Tr(PA)}\ge\frac\sigma8n\norm{A}.
\]
Consequently, relative to $P\C^n\oplus(I-P)\C^n$, the normalized trace $\alpha$ of the first diagonal block satisfies
\[
 \abs{\alpha}\ge\frac\sigma4\norm{A}.
\]
\end{lemma}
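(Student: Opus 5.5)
Normalize $\|A\|=1$ and write $A=H+iK$ with $H=\operatorname{Re}A$, $K=\operatorname{Im}A$. Both are traceless Hermitian contractions, and $\|A\|_2^2=\|H\|_2^2+\|K\|_2^2$ by orthogonality of Hermitian and skew-Hermitian parts in the Hilbert--Schmidt inner product. Hence one of them, say $H$ (the case of $K$ is identical with an extra factor $i$), satisfies $\|H\|_2^2\ge\tfrac{\sigma}{2}n$. Since $\|H\|\le1$, every eigenvalue has modulus at most one, so $\|H\|_1\ge\|H\|_2^2\ge\tfrac\sigma2 n$.

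Next I would select the projection. Order the eigenvalues $\lambda_1\ge\cdots\ge\lambda_n$ of $H$, and let $P$ project onto the span of eigenvectors for $\lambda_1,\dots,\lambda_{n/2}$. Because $\Tr H=0$, the positive and negative parts have equal trace, namely $\tfrac12\|H\|_1$. The key claim is that the top half captures at least half of the positive mass: $\Tr(PH)=\sum_{j\le n/2}\lambda_j\ge\tfrac12\Tr H_+$. To see this, if at most $n/2$ eigenvalues are positive, the top half contains all of them, and $\Tr(PH)=\Tr H_+$. Otherwise the top half consists of positive eigenvalues that dominate the remaining positive ones, so their sum is at least half of $\Tr H_+$ (an average of the largest half is at least the overall average). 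Alternatively, use $\Tr(PH)=-\Tr((I-P)H)\ge0$ together with $\Tr(PH)\ge\sum_{j\le n/2}\lambda_j^+\cdot$(comparison). In either case $\Tr(PH)\ge\tfrac14\|H\|_1\ge\tfrac\sigma8 n$.

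The step I expect to be the main obstacle is passing from $H$ back to $A$. Here $P$ commutes with $H$, and $\Tr(PA)=\Tr(PH)+i\Tr(PK)$, where both traces are real because $P,H,K$ are Hermitian. Therefore $|\Tr(PA)|\ge|\operatorname{Re}\Tr(PA)|=\Tr(PH)\ge\tfrac\sigma8n$. If it was $K$ that had large mass, the same computation with the imaginary part applies. This proves the first claim, $|\Tr(PA)|\ge\tfrac\sigma8n\|A\|$. For the consequence, the first diagonal block relative to $P\C^n\oplus(I-P)\C^n$ is the compression $PAP$ on a space of dimension $n/2$. Its normalized trace is $\alpha=\Tr(PA)/(n/2)$, so $|\alpha|\ge\tfrac\sigma4\|A\|$. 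Undoing the normalization gives the stated bounds.
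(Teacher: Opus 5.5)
Your overall route is the paper's: pick whichever of $\operatorname{Re}A$, $\operatorname{Im}A$ has trace norm at least $\sigma n/2$, project onto the eigenvectors for its top $n/2$ eigenvalues, and show this captures half of the positive mass. You reach the trace-norm bound via $\|A\|_2^2=\|H\|_2^2+\|K\|_2^2$ and $\|H\|_1\ge\|H\|_2^2$, whereas the paper uses $\|A\|_1\ge\|A\|_2^2$ and the trace-norm triangle inequality; both are fine.

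However, the one nontrivial step, $\Tr(PH)\ge\frac12\Tr H_+$, is not proved. In the case where at most $n/2$ eigenvalues are positive, you assert $\Tr(PH)=\Tr H_+$. This is false. If only $p<n/2$ eigenvalues are positive, the top half also contains $n/2-p$ nonpositive eigenvalues, and these may be strictly negative. For instance, $n=4$ and $H=\diag(1,-\frac13,-\frac13,-\frac13)$ give $\Tr(PH)=\frac23<1=\Tr H_+$. Your fallback sentence does not repair this. It ends in an unspecified ``(comparison)'', and the inequality $\Tr(PH)\ge\sum_{j\le n/2}\lambda_j^+$ it invokes fails in exactly this case, since there the right side equals $\Tr H_+$.

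The conclusion is still true, but it needs the argument the paper supplies. The $n/2-p$ extra eigenvalues are the nonpositive ones of smallest modulus among the $n-p$ nonpositive eigenvalues. Hence they carry at most the fraction $(n/2-p)/(n-p)\le\frac12$ of the negative mass $S=\frac12\|H\|_1$, which gives $\Tr(PH)\ge S-\frac S2=\frac14\|H\|_1$.

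Equivalently, use $\Tr(PH)=-\Tr((I-P)H)$. When $p\le n/2$, the complementary projection picks out the $n/2$ most negative of the $n-p\ge n/2$ nonpositive eigenvalues. These carry at least the fraction $\frac{n/2}{n-p}\ge\frac12$ of $S$. This is the mirror image of your correct case $p>n/2$.

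The step you flagged as the main obstacle, passing from $H$ back to $A$ via $\abs{\Tr(PA)}\ge\abs{\operatorname{Re}\Tr(PA)}$, is routine. The eigenvalue count is where the care is needed.
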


\begin{proof}
The case $A=0$ is trivial, so by homogeneity we may assume $\|A\|=1$.  Every singular value then lies in $[0,1]$, and therefore
\[
 \|A\|_1=\sum_js_j(A)
 \ge\sum_js_j(A)^2
 =\|A\|_2^2
 \ge\sigma n.
\]
Write $A=H+iK$ with $H=H^*$ and $K=K^*$.  The trace-norm triangle inequality gives
$\|A\|_1\le\|H\|_1+\|K\|_1$, so at least one of the two self-adjoint matrices, denoted by $M$, satisfies $\|M\|_1\ge\tfrac\sigma2n$. Because $A$ is traceless, so are $H$ and $K$, and hence so is $M$.

Let $\lambda_1\ge\cdots\ge\lambda_n$ be the eigenvalues of $M$ and put $m=n/2$.  The total positive and negative masses agree:
\[
 S:=\sum_{\lambda_i>0}\lambda_i
 =\sum_{\lambda_i<0}|\lambda_i|
 =\frac12\|M\|_1.
\]
We claim that the largest $m$ eigenvalues have sum at least $S/2$.  If there are $p\ge m$ positive eigenvalues, their decreasing order implies that the largest $m$ carry at least the fraction $m/p\ge1/2$ of the positive mass.  If $p<m$, consider the remaining $2m-p$ nonpositive eigenvalues, including any zeros.  Their absolute values have total sum $S$.  The largest $m$ eigenvalues consist of all $p$ positive eigenvalues together with the $m-p$ nonpositive eigenvalues of smallest absolute value.  The latter contribute in absolute value at most the fraction $\tfrac{m-p}{2m-p}\le\tfrac12$ of $S$.  In either case,
\[
 \sum_{i=1}^m\lambda_i\ge\frac S2=\frac14\|M\|_1.
\]

Let $P$ be the orthogonal projection onto the span of the corresponding $m$ orthonormal eigenvectors.  Then $\Tr(PM)\ge\tfrac14\|M\|_1\ge\tfrac\sigma8n$. If $M=H$, this number is $\operatorname{Re}\Tr(PA)$; if $M=K$, it is $\operatorname{Im}\Tr(PA)$.  Hence
\[
 |\Tr(PA)|\ge\frac\sigma8n.
\]
Since $P^2=P$, cyclicity gives $\Tr(PAP)=\Tr(PA)$.  The first diagonal block has dimension $m=n/2$, so its normalized trace is $\alpha=\tfrac1m\Tr(PAP)$, $|\alpha|\ge\tfrac\sigma4$. The complementary block has the same dimension and normalized trace $-\alpha$, because the total trace is zero.
\end{proof}

Applying the balanced trace-gap theorem to this projection proves the stated
stable-rank consequence.

\begin{proof}[Proof of Corollary~\ref{cor:stable-easy}]
Lemma~\ref{lem:trace-polarization} supplies a projection $P$ of rank $n/2$ such that, relative to
$P\mathbb C^n\oplus(I-P)\mathbb C^n$, the first diagonal block has normalized trace $\alpha$ with $|\alpha|\ge(\sigma/4)\|A\|$.  Since the two blocks have equal dimension and $\Tr A=0$, the second normalized trace is $-\alpha$.  The hypotheses of Theorem~\ref{thm:trace-gap} therefore hold with $\delta=\sigma/4$, and that theorem gives the asserted bound.
\end{proof}

\section{Proof of the main theorem}\label{sec:main}

The estimates now give two incompatible requirements for a minimal
counterexample.  Theorem~\ref{thm:rank-exclusion} forces its stable rank to
be proportional to the dimension, while Corollary~\ref{cor:stable-easy}
bounds the commutator cost of every even-dimensional matrix with that
property.  Lemma~\ref{lem:one-sided}, the square-root gluing estimate from
Section~\ref{sec:gluing}, handles odd dimensions by leaving an
even-dimensional compression whose cost is still large enough for the
exclusion theorem.  We first complete this argument over $\mathbb C$, then
transfer the bound to real matrices of the same size and prove the
ultraproduct corollary.

\begin{proof}[Proof of Theorem~\ref{thm:main} over $\mathbb C$]
Let $\varepsilon_0,\delta_0,K_0$ and $c_0$ be the constants in Theorem~\ref{thm:rank-exclusion}, and let
$C_*=C_{c_0/4}$ be the constant supplied by Corollary~\ref{cor:stable-easy} for $\sigma=c_0$.  Fix one absolute number $K$ satisfying $K>\max\{K_0, 64,(\sqrt{C_*}+1)^2\}$. We prove that $\muc(A)\le K\|A\|$ for every traceless matrix $A$.

Assume the contrary.  After scaling a nonzero counterexample, there is a traceless contraction with norm one and commutator cost larger than $K$.  Choose such a matrix
$A\in \Mn$ with $n$ minimal.  Necessarily $n\ge2$, since the only
traceless scalar matrix is zero.  Then $\|A\|=1$, $\muc(A)>K$, and minimality means that every traceless matrix $D$ of dimension strictly less than $n$ satisfies $\muc(D)\le K\|D\|$. Indeed, otherwise a smaller-dimensional counterexample could also be normalized to norm one.

\emph{Even dimension.}
Suppose first that $n$ is even.  All hypotheses of Theorem~\ref{thm:rank-exclusion} are satisfied: $K\ge K_0$, the smaller-dimensional bound holds, and
$\muc(A)>K\|A\|>\tfrac34K\|A\|$.  Hence $\|A\|_2^2\ge c_0n$. Corollary~\ref{cor:stable-easy} now gives
$\muc(A)\le C_*<K$, contradicting the choice of $A$.

\emph{Odd dimension.}
Suppose now that $n$ is odd.  By Theorem~\ref{thm:hollow}(i) and unitary invariance, we may assume $A$ is hollow.  Splitting off the first coordinate gives
\[
 A=\begin{pmatrix}0&X\\Y&D\end{pmatrix},
\]
where $D\in M_{n-1}(\mathbb C)$ is traceless of even dimension.  Since $X$ and $Y$ are corners of the contraction $A$, $\max\{\|X\|,\|Y\|\}\le1$. Lemma~\ref{lem:one-sided} implies
\[
 \sqrt{\muc(A)}\le\sqrt{\muc(D)}+\sqrt{\max\{\|X\|,\|Y\|\}}
 \le\sqrt{\muc(D)}+1.
\]
Because $\muc(A)>K$, we obtain $\muc(D)>(\sqrt K-1)^2\ge \tfrac34K\ge \tfrac34K\|D\|$. Every traceless matrix of dimension strictly smaller than $n-1$ also has dimension smaller than $n$, so it satisfies the bound $\muc(E)\le K\|E\|$ by minimality of $n$.  Therefore Theorem~\ref{thm:rank-exclusion} applies to $D$ and yields $\|D\|_2^2\ge c_0(n-1)\|D\|^2$. Corollary~\ref{cor:stable-easy} gives $\muc(D)\le C_*\|D\|\le C_*$. Applying Lemma~\ref{lem:one-sided} once more,
\[
 \muc(A)
 \le(\sqrt{\muc(D)}+\sqrt{\max\{\|X\|,\|Y\|\}})^2
 \le(\sqrt{C_*}+1)^2
 <K,
\]
again a contradiction.

Both parity cases are impossible.  Hence every traceless complex matrix
satisfies $\muc(A)\le K\|A\|$.  This infimum is attained: balance the factors
in a minimizing sequence, extract a convergent subsequence in the
finite-dimensional matrix space, and pass to the limit in the commutator
identity.  Denote the resulting absolute constant by $K_{\mathbb C}$.
This proves the complex case; the reduction below supplies a universal
constant for both fields.
\end{proof}

\subsection{Real matrices}\label{sec:real-case}

We now deduce the real case from the complex theorem by choosing a complex
structure on the given real space.  In even dimensions at least
four, we choose an orthogonal real matrix $J$ with $J^2=-I$ to represent
multiplication by $i$.  The matrix $A$ splits into a part commuting with
$J$, which is complex-linear, and a part anticommuting with $J$, which is
antilinear.  Real trace zero alone does not guarantee that the first part
has zero complex trace; we choose $J$ so that $\Tr(JA)=0$ as well.

Apply the complex commutator estimate to the complex-linear part.
Shifting its first factor by a multiple of $J$ leaves that commutator
unchanged and makes the commutator equation on the antilinear part
invertible.  Solving this equation absorbs the remaining part of $A$.

The decomposition into complex-linear and antilinear parts is standard
linear algebra.  The spectral-shift method is the one used for Sylvester
equations; see Rosenblum \cite{Rosenblum}.  That reference supplies the
solvability principle, rather than the quantitative reduction below,
which we prove in full, including the choice of complex structure and the
small and odd dimensions.

\begin{proposition}\label{prop:real-case}
Suppose $K_{\mathbb C}\ge1/2$ is such that every traceless complex matrix $T$
admits complex factors of the same size with
$T=[B,C]$ and $\|B\|\,\|C\|\le K_{\mathbb C}\|T\|$.
Then every traceless $A\in M_n(\mathbb R)$ admits
$B,C\in M_n(\mathbb R)$ with $A=[B,C]$ and
\[
 \|B\|\,\|C\|\le K_{\mathbb R}\|A\|,
 \qquad
 K_{\mathbb R}=\left(\sqrt{3K_{\mathbb C}+\tfrac32}+1\right)^2.
\]
\end{proposition}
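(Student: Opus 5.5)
We follow the strategy sketched before the statement. The constant $K_{\mathbb R}=(\sqrt{3K_{\mathbb C}+3/2}+1)^2$ has the shape of Lemma~\ref{lem:one-sided}, so odd dimensions are handled by peeling off one coordinate. For traceless real $A$ of odd dimension $n$, real hollowization (Fillmore's argument works over $\mathbb R$ for real traceless matrices, since it uses only real unit vectors $v$ with $\langle Av,v\rangle=0$ and orthogonal conjugation) lets us write $A=\left(\begin{smallmatrix}0&X\\Y&D\end{smallmatrix}\right)$ with $D$ real traceless of even size $n-1$. The proof of Lemma~\ref{lem:one-sided} uses only real operations, so it runs verbatim over $\mathbb R$. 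It therefore suffices to prove the even-dimensional bound $\mu_{\mathbb R}(D)\le (3K_{\mathbb C}+3/2)\|D\|$. The small case $n-1=2$ needs separate treatment, for instance by a direct computation or by the same shift argument with $J$ the rotation by $\pi/2$ when $\Tr(JD)=0$ can be arranged.

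\emph{Step 1: choice of $J$.} For even $n=2m$, we look for an orthogonal $J$ with $J^2=-I$ and $\Tr(JA)=0$. The set of such $J$ is connected for the conjugation action of $SO(n)$, and it contains both $J_0$ and $-J_0$ when $m$ is even. In that case $\Tr(JA)$ is a continuous real function taking opposite values at $\pm J_0$, so it has a zero. If $m$ is odd, $-J_0$ lies in the other component. There we instead pair coordinates after a real hollowization: with $A$ hollow and $J=\bigoplus_k\left(\begin{smallmatrix}0&-\epsilon_k\\\epsilon_k&0\end{smallmatrix}\right)$, we have $\Tr(JA)=\sum_k\epsilon_k(a_{2k-1,2k}-a_{2k,2k-1})$. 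We then choose the signs $\epsilon_k$ and the pairing to make this vanish, or use a continuous path through rotations in one pair. This is the step I expect to need the most care.

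\emph{Step 2: decomposition.} Write $A=A_++A_-$ with $A_\pm=\tfrac12(A\mp JAJ)$. Then $A_+$ commutes with $J$ and $A_-$ anticommutes with $J$, and $\|A_\pm\|\le\|A\|$. View $\mathbb R^n$ as $\mathbb C^m$ with $J=i$. Then $A_+$ is complex linear, with complex trace $\tfrac12(\Tr A_+ - i\Tr(JA_+))$, which vanishes by Step 1. The complex theorem gives a complex-linear representation $A_+=[B_+,C_+]$. These are real matrices commuting with $J$, with real operator norms equal to the complex ones, and we normalize them as $\|B_+\|=1$ and $\|C_+\|\le K_{\mathbb C}\|A\|$.

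\emph{Step 3: absorbing $A_-$.} We set $B=B_++sJ$ for a real $s>1$, which leaves $[B,C_+]=A_+$ unchanged. We then seek $C=C_++Y$ with $Y$ antilinear ($YJ=-JY$) and $[B,Y]=A_-$ up to a linear-part correction. For antilinear $Y$ we have $[sJ,Y]=2sJY$, and $[B_+,Y]$ is antilinear. So the map $Y\mapsto[B_+,Y]+2sJY$ on antilinear matrices is $2sJ(I+\tfrac{1}{2s}J^{-1}[B_+,\cdot\,])$. The perturbation has norm at most $2\|B_+\|/(2s)\le 1/s$, so for $s>1$ a Neumann series gives $\|Y\|\le\|A_-\|/(2(s-1))$. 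This yields $\|B\|\le 1+s$ and $\|C\|\le K_{\mathbb C}\|A\|+\|A\|/(2(s-1))$.

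\emph{Step 4: optimization.} Choosing $s$ optimally (for example $s=2$ gives $3(K_{\mathbb C}+\tfrac12)\|A\|$) yields the even bound $3K_{\mathbb C}+\tfrac32$. Combined with the real version of Lemma~\ref{lem:one-sided}, this gives the stated $K_{\mathbb R}$.
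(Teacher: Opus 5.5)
\textbf{Verdict: there is a genuine gap in Step~1 when $m$ is odd.}

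\textbf{What is correct.} Steps~2--4 and the odd-dimensional peeling are correct and match the paper. You split $A$ into its $J$-linear and $J$-antilinear parts, represent the linear part using the complex hypothesis, and replace the first factor by $B_++2J$. This gives the constant $3(K_{\mathbb C}+\tfrac12)$. You then remove one zero diagonal entry with the real form of Lemma~\ref{lem:one-sided}. No ``linear-part correction'' is needed in Step~3: $Y\mapsto[B_++sJ,Y]$ already maps antilinear matrices to antilinear matrices. Your intermediate-value argument in Step~1 is a valid alternative to the paper's construction when $m$ is even, because $-J_0$ is then $SO(2m)$-conjugate to $J_0$.

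\textbf{The gap: $n=2m$ with $m\ge3$ odd.} Neither of your suggested fixes works.
\begin{enumerate}[leftmargin=2.2em]
\item \emph{Signs and pairings.} In a fixed basis these form a finite set. Moreover $\Tr(JA)=2\sum_k\epsilon_kS_{i_kj_k}$ depends only on the skew part $S=\tfrac12(A-A^{T})$, so hollowness plays no role. For generic $S$, none of these finitely many signed sums vanishes.
\item \emph{Rotations inside one pair.} These fix $J$, because each $2\times2$ block of $J$ commutes with $SO(2)$.
\end{enumerate}
Since the orbit of $J_0$ does not contain $-J_0$ for $m$ odd, you would still need a separate argument that $\Tr(JA)$ changes sign on that orbit.

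\textbf{The missing idea.} The paper chooses $J$ transversal to the invariant planes of $S$. Take the real canonical basis $e_1,f_1,\ldots,e_m,f_m$, with each $\operatorname{span}\{e_j,f_j\}$ invariant under $S$, and set $Je_j=f_{j+1}$, $Jf_{j+1}=-e_j$ cyclically. For $m\ge2$ the supports of $J$ and $S$ are disjoint, so $\Tr(JS)=0$. Also $\Tr(JH)=0$ automatically for the symmetric part $H$, since $J$ is skew. This handles both parities of $m$ at once and needs no continuity argument.

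Alternatively, you can keep your continuity idea but average. The matrix $\int_{SO(n)}gJ_0g^{-1}\,dg$ commutes with $SO(n)$, so it is scalar for $n\ge3$; it is also skew, so it is zero. Hence $g\mapsto\Tr(gJ_0g^{-1}A)$ has mean zero on the connected group $SO(n)$ and vanishes somewhere.

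\textbf{The case $n=2$.} This case is needed both for $n=2$ itself and for $n=3$ via peeling, and the shift option you mention is unavailable in general. On $\mathbb R^2$ the only orthogonal complex structures are $J=\pm\left(\begin{smallmatrix}0&-1\\1&0\end{smallmatrix}\right)$. For these, $\Tr(JD)=\pm(d_{12}-d_{21})$, which vanishes only when $D$ is symmetric. You need the explicit formula instead. Make $D$ hollow by an orthogonal change of basis, then use $\left(\begin{smallmatrix}0&x\\y&0\end{smallmatrix}\right)=\bigl[\diag(\tfrac12,-\tfrac12),\left(\begin{smallmatrix}0&x\\-y&0\end{smallmatrix}\right)\bigr]$. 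The factor-norm product is $\|D\|/2$.
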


\begin{proof}
The case $A=0$ is immediate, so assume $\|A\|=1$ by homogeneity, and put
$\kappa=3K_{\mathbb C}+3/2$.  We first establish the sharper bound $\kappa$
in even dimensions $n=2m\ge4$.

\emph{Choose a complex structure.}
Write $A=H+S$, where $H^T=H$ and $S^T=-S$.  The real canonical form of $S$
gives an orthonormal basis $e_1,f_1,\ldots,e_m,f_m$ in which each plane
$\operatorname{span}\{e_j,f_j\}$ is invariant under $S$.  With cyclic
indices, define
\[
 Je_j=f_{j+1},\qquad Jf_{j+1}=-e_j
 \quad (j=1,\ldots,m).
\]
Thus $J^T=-J$ and $J^2=-I$, so $J$ is an orthogonal complex structure.
Since $m\ge2$, the nonzero entries of $J$ and $S$ occur on disjoint pairs of
coordinates, giving $\Tr(JS)=0$.  Also $\Tr(JH)=0$, because $J$ is
skew-symmetric and $H$ is symmetric.  Consequently, $\Tr(JA)=0$.

Set
\[
 L=\frac{A-JAJ}{2},\qquad E=\frac{A+JAJ}{2}.
\]
Then $JL=LJ$, $JE=-EJ$, and $\|L\|,\|E\|\le1$.  Viewing $J$ as
multiplication by $i$ identifies $\mathbb R^{2m}$ with a complex Hilbert
space of dimension $m$, with the same norm.  The operator $L$ is
complex-linear.  Cyclicity of the real trace gives
\[
 \Tr_{\mathbb R}L=\Tr_{\mathbb R}A=0,
 \qquad
 \Tr_{\mathbb R}(JL)=\Tr_{\mathbb R}(JA)=0.
\]
These are twice the real part and minus twice the imaginary part,
respectively, of $\Tr_{\mathbb C}L$.  Hence $L$ has zero complex trace.

\emph{Absorb the antilinear part.}
The assumed complex estimate supplies $L=[B_0,C_0]$, where $B_0,C_0$ are
real operators commuting with $J$.  Reciprocal rescaling gives
$\|B_0\|\le1$ and $\|C_0\|\le K_{\mathbb C}$; if $L=0$, take both
factors to be zero.  Let
\[
 \mathcal E_J=\{X\in M_n(\mathbb R):XJ=-JX\}.
\]
Both maps $X\mapsto[B_0,X]$ and $X\mapsto[2J,X]$ preserve this real
subspace.  On $\mathcal E_J$ the latter map is $X\mapsto4JX$, with inverse
$Y\mapsto-JY/4$ of norm $1/4$, whereas the former has norm at most $2$.
A Neumann series therefore shows that $X\mapsto[B_0+2J,X]$ is invertible on
$\mathcal E_J$, with inverse norm at most $1/2$.  Since $E\in\mathcal E_J$,
there is a real $X$ such that
\[
 [B_0+2J,X]=E,\qquad \|X\|\le\tfrac12.
\]
As $C_0$ commutes with $J$, this gives
\[
 A=[B_0+2J,C_0+X],
 \qquad
 \|B_0+2J\|\,\|C_0+X\|
 \le3\left(K_{\mathbb C}+\tfrac12\right)=\kappa.
\]

\emph{Small and odd dimensions.}
For any traceless real matrix, the symmetric part is traceless, so its
quadratic form vanishes at some real unit vector.  Taking that vector first
in an orthonormal basis makes the first diagonal entry of the matrix zero.
For $n=2$, the other diagonal entry is then zero as well, and
\[
 \begin{pmatrix}0&x\\y&0\end{pmatrix}
 =\left[
 \begin{pmatrix}1/2&0\\0&-1/2\end{pmatrix},
 \begin{pmatrix}0&x\\-y&0\end{pmatrix}
 \right]
\]
has factor norm product $\|A\|/2\le\kappa$.  The case $n=1$ is trivial.

If $n\ge3$ is odd, the same orthogonal change of basis gives
$A=\left(\begin{smallmatrix}0&U\\V&D\end{smallmatrix}\right)$ with $D$
traceless of even dimension and $\|D\|,\|U\|,\|V\|\le1$.
The even-dimensional result and reciprocal rescaling provide real factors
$D=[B_0,C_0]$ with $\|B_0\|,\|C_0\|\le\sqrt\kappa$.
We use the real resolvent construction underlying
Lemma~\ref{lem:one-sided}.  Put $t=\sqrt\kappa+1$ and define
\[
 \widehat B=\begin{pmatrix}t&0\\0&B_0\end{pmatrix},\qquad
 \widehat C=\begin{pmatrix}
 0&U(tI-B_0)^{-1}\\
 (B_0-tI)^{-1}V&C_0
 \end{pmatrix}.
\]
Both inverses are real and have norm at most $1$.  Direct multiplication
gives $[\widehat B,\widehat C]=A$.  The off-diagonal part of $\widehat C$
has norm at most $1$, and hence
\[
 \|\widehat B\|\,\|\widehat C\|
 \le(\sqrt\kappa+1)^2=K_{\mathbb R}.
\]
Undoing the normalization proves the proposition.
\end{proof}

Applying Proposition~\ref{prop:real-case} to the complex bound proved above,
and choosing $K=K_{\mathbb R}$, completes the proof of
Theorem~\ref{thm:main} for both fields.

\subsection{Commutators in tracial ultraproducts}\label{sec:ultraproducts}

With the matrix theorem proved over both fields, we finish with the
standard ultraproduct consequence identified by Johnson, Ozawa, and
Schechtman \cite[Concluding remarks, item~4]{JOS}.  The argument uses only
the uniform matrix bound: solve the equation coordinatewise and balance
the factors so that they define bounded sequences.  Dykema and Skripka
\cite[proof of Theorem~2.2]{DykemaSkripka} use this passage to the Wright
factor for normal elements.

To preserve the constant $K$, we choose a representative with the correct
norm before subtracting its scalar trace.  Retain the notation from
Section~\ref{sec:intro-main}, and write $\pi$ for the quotient map onto
$\mathcal M$.  We use the standard estimate
$\|\pi((a_k))\|\le\lim_\omega\|a_k\|$.

\begin{proof}[Proof of Corollary~\ref{cor:ultraproduct}]
The case $T=0$ is immediate, so assume $\|T\|=1$ by homogeneity.
Choose a bounded representative $(a_k)$ and set
$S_k=a_kf(a_k^*a_k)$, where $f(t)=1$ for $0\le t\le1$ and
$f(t)=t^{-1/2}$ for $t\ge1$.  This clips each singular value at $1$, so
$\|S_k\|\le1$, while continuous functional calculus gives
$\pi((S_k))=Tf(T^*T)=T$.  Over $\mathbb R$, the real-valued function $f$
preserves real matrices, so the same construction applies.

Now put $t_k=\tau_k(S_k)\in\mathbb F$.  Then
$|t_k|\le1$ and $\lim_\omega t_k=\tau_\omega(T)=0$.  Thus the scalar
sequence $(t_kI_{n_k})$ vanishes in the quotient, and
$T_k=S_k-t_kI_{n_k}$ is a traceless representative of $T$ satisfying
$\|T_k\|\le1+|t_k|$.

Apply Theorem~\ref{thm:main} over $\mathbb F$ to each $T_k$, and balance the
two factors by reciprocal rescaling.  This gives
$B_k,C_k\in M_{n_k}(\mathbb F)$ with
\[
 T_k=[B_k,C_k],
 \qquad
 \|B_k\|=\|C_k\|\le\sqrt{K\|T_k\|}
 \le\sqrt{K(1+|t_k|)}.
\]
When $T_k=0$, take both factors to be zero.  These sequences are bounded
by $\sqrt{2K}$ and therefore define $B=\pi((B_k))$ and $C=\pi((C_k))$.
Since $\pi$ is a homomorphism,
\[
 [B,C]=\pi(([B_k,C_k]))=\pi((T_k))=T.
\]
Finally, $|t_k|\to0$ along $\omega$, so
\[
 \|B\|\le\lim_\omega\|B_k\|\le\sqrt K,
 \qquad
 \|C\|\le\lim_\omega\|C_k\|\le\sqrt K,
\]
and hence $\|B\|\,\|C\|\le K=K\|T\|$.
\end{proof}

\section*{Acknowledgments}

We acknowledge the closely contemporaneous work of Shen, Wang, and Zhi
\cite{ShenWangZhi}, whose first preprint, dated September~9, 2026, proves
the same dimension-free bound for complex matrices.  Our independent
approach was already recorded in drafts dated August~14, 2026.  Those
drafts contain the core complex argument: the trace-class Poincar\'e
inequality, local regularity of the commutator map, the invertible-link
estimate, and the approximate-rank argument.  We record these dates to
document independent development; we make no claim of priority.  The
proof here does not use their results.

\medskip
\noindent\textbf{Use of AI tools.}
During the preparation of this work, the author used OpenAI's ChatGPT-5.6 Sol and 6.0 Astra to explore ideas, assist with exposition, conduct literature searches, and check mathematical arguments. The author assumes full responsibility for the content, validity, and attribution of all mathematical claims.


\begin{thebibliography}{30}
\setlength{\itemsep}{0.3em}

\bibitem{AlbertMuckenhoupt}
A.~A. Albert and B.~Muckenhoupt,
\emph{On matrices of trace zero},
Michigan Math. J. \textbf{4} (1957), 1--3.

\bibitem{Anderson}
J.~Anderson,
\emph{Extensions, restrictions, and representations of states on $C^*$-algebras},
Trans. Amer. Math. Soc. \textbf{249} (1979), no.~2, 303--329.

\bibitem{AngelSchechtman}
O.~Angel and G.~Schechtman,
\emph{The Hilbert--Schmidt version of the commutator theorem for zero trace matrices},
Bull. Lond. Math. Soc. \textbf{47} (2015), no.~4, 715--719.


\bibitem{AuYeungPoon}
Y.~H. Au-Yeung and Y.~T. Poon,
\emph{A remark on the convexity and positive definiteness concerning Hermitian matrices},
Southeast Asian Bull. Math. \textbf{3} (1979), 85--92.

\bibitem{BST}
A.~Ben-Aroya, O.~Schwartz, and A.~Ta-Shma,
\emph{Quantum expanders: motivation and construction},
Theory Comput. \textbf{6} (2010), 47--79.

\bibitem{BDM}
R.~Bhatia, C.~Davis, and A.~McIntosh,
\emph{Perturbation of spectral subspaces and solution of linear operator equations},
Linear Algebra Appl. \textbf{52/53} (1983), 45--67.

\bibitem{BhatiaRosenthal}
R.~Bhatia and P.~Rosenthal,
\emph{How and why to solve the operator equation $AX-XB=Y$},
Bull. Lond. Math. Soc. \textbf{29} (1997), no.~1, 1--21.


\bibitem{DammFassbender}
T.~Damm and H.~Fa\ss bender,
\emph{Simultaneous hollowization, joint numerical range, and stabilization by noise},
SIAM J. Matrix Anal. Appl. \textbf{41} (2020), no.~2, 637--656.


\bibitem{DykemaSkripka}
K.~Dykema and A.~Skripka,
\emph{On single commutators in $\mathrm{II}_1$-factors},
Proc. Amer. Math. Soc. \textbf{140} (2012), no.~3, 931--940.

\bibitem{Fillmore}
P.~A. Fillmore,
\emph{On similarity and the diagonal of a matrix},
Amer. Math. Monthly \textbf{76} (1969), 167--169.

\bibitem{Hastings}
M.~B. Hastings,
\emph{Random unitaries give quantum expanders},
Phys. Rev. A \textbf{76} (2007), 032315.

\bibitem{JOS}
W.~B. Johnson, N.~Ozawa, and G.~Schechtman,
\emph{A quantitative version of the commutator theorem for zero trace matrices},
Proc. Natl. Acad. Sci. USA \textbf{110} (2013), no.~48, 19251--19255.

\bibitem{MSS}
A.~W. Marcus, D.~A. Spielman, and N.~Srivastava,
\emph{Interlacing families II: mixed characteristic polynomials and the Kadison--Singer problem},
Ann. of Math. (2) \textbf{182} (2015), no.~1, 327--350.

\bibitem{PearcyTopping}
C.~Pearcy and D.~Topping,
\emph{Commutators and certain $\mathrm{II}_1$-factors},
J. Funct. Anal. \textbf{3} (1969), 69--78.

\bibitem{Pisier}
G.~Pisier,
\emph{Quantum expanders and geometry of operator spaces},
J. Eur. Math. Soc. (JEMS) \textbf{16} (2014), no.~6, 1183--1219.

\bibitem{Rosenblum}
M.~Rosenblum,
\emph{On the operator equation $BX-XA=Q$},
Duke Math. J. \textbf{23} (1956), 263--269.

\bibitem{RS}
M.~Ravichandran and N.~Srivastava,
\emph{Asymptotically optimal multi-paving},
Int. Math. Res. Not. IMRN (2021), no.~14, 10908--10940.

\bibitem{Ricard}
\'E.~Ricard,
\emph{H\"older estimates for the noncommutative Mazur maps},
Arch. Math. (Basel) \textbf{104} (2015), no.~1, 37--45.

\bibitem{ShenWangZhi}
H.~Shen, J.~Wang, and L.~Zhi,
\emph{A dimension-independent commutator bound}. arXiv:2609.09938.

\bibitem{Shoda}
K.~Shoda,
\emph{Einige S\"atze \"uber Matrizen},
Jpn. J. Math. \textbf{13} (1936), 361--365.

\bibitem{WenFangYao}
S.~Wen, J.~Fang, and Z.~Yao,
\emph{A stronger version of Dixmier's averaging theorem and some applications},
J. Funct. Anal. \textbf{287} (2024), no.~8, 110569.


\bibitem{Wright}
F.~B. Wright,
\emph{A reduction for algebras of finite type},
Ann. of Math. (2) \textbf{60} (1954), 560--570.

\end{thebibliography}
\end{document}